\documentclass[reqno, 11pt, a4paper]{amsart} 
\usepackage[english]{babel}
\usepackage{amsfonts, amsmath, amsthm, amssymb,amscd,indentfirst}
\usepackage{mathtools}
\usepackage{xcolor}
\usepackage{times}
\usepackage{enumerate}
\usepackage{enumitem}
\usepackage{esint}
\usepackage{stackrel}
\usepackage{anysize} 
 \usepackage[mathscr]{euscript}
\marginsize{2cm}{2cm}{1.5cm}{1.5cm}

\newtheorem{theorem}{Theorem}[section]
\newtheorem{proposition}[theorem]{Proposition}
\newtheorem{lemma}[theorem]{Lemma}
\newtheorem{definition}[theorem]{Definition}
\newtheorem{corollary}[theorem]{Corollary}

 \numberwithin{equation}{section}

 \newcommand{\dx}{\textnormal{d}x}

 \newcommand{\dt}{\textnormal{d} t}

\newcommand{\conspoin}{\mathtt{K}}
\newcommand{\cpoin}{\textnormal{n}}
\newcommand{\cones}{\mathcal{C}}
\newcommand{\conrem}{\hat{\textnormal{s}}}
\newcommand{\consman}{\hat{c}}
\newcommand{\vinf}{\mathfrak{T}}
\newcommand{\rast}{R_\ast}
 \newcommand{\bfc}{\textbf{c}}

\newcommand{\consfi}{\Lambda _1}
\newcommand{\consfii}{\Lambda _2}

 \newcommand{\tr}{\mathtt{R}}
 \newcommand{\kob}{\mathscr{K}_{\mathscr{O} _1 ,  \mathscr{O} _2} (\Omega)}
 \newcommand{\obsi}{\mathscr{O} _1}
 \newcommand{\obsii}{\mathscr{O} _2}
 \newcommand{\uu}{u}
 \newcommand{\ue}{u_{\ell}}
 \newcommand{\vuu}{\vartheta}
 \newcommand{\vue}{\vartheta _{\ell}}
\newcommand{\wo}{W^{1, p(x)}(\Omega ;  \mathtt{V} )}
\newcommand{\wob}{W^{1, p(x)} _0 (\Omega ;  \mathtt{V} )}
\newcommand{\lwo}{W^{1, p(x)}_{\operatorname{loc}}(\Omega ;  \mathtt{V} )}
\newcommand{\dhd}{\textnormal{d}h}
 \newcommand{\spt}{\operatorname{supp}}
\newcommand{\dist}{\operatorname{dist}}
\newcommand{\wghtv}{\mathtt{V}}
\newcommand{\wghtz}{\textnormal{I}}
\newcommand{\wghtzi}{\textnormal{II}}

\newcommand{\loc}{\operatorname{loc}}

\newcommand{\mec}{\delta_{\mathbb{R}^n}}

\newcommand{\pvr}{x}
\newcommand{\pvry}{y}
\newcommand{\pvrz}{z}
\newcommand{\evrx}{\textnormal{x}}
\newcommand{\evry}{\textnormal{y}}
\newcommand{\evrz}{\textnormal{z}}

\newcommand{\singset}{\mathcal{S}}
\newcommand{\grad}{\boldsymbol{\nabla}}
\newcommand{\fnf}{F}
\newcommand{\fnfs}{F ^\ast}
\newcommand{\deriv}{\mathscr{D}}
\newcommand{\dwvi}{\textnormal{d} \mathtt{V}}
\newcommand{\wghtvi}{\mathtt{V}}
\newcommand{\funcion}{f}
\newcommand{\multg}{\bullet}


\title{Removable set for Hölder continuous solutions of \(\mathscr{A}\)-harmonic functions on Finsler manifolds}

\author{Juan Pablo Alcon Apaza}

\address{Departamento de Matem\'atica, Universidade Federal de São Carlos, 13565-905, São Carlos--SP, Brazil}

\email{juanpabloalconapaza@gmail.com}

\begin{document}
\maketitle

\begin{abstract}
We establish that a closed set \( \singset \) is removable for \( \alpha \)-Hölder continuous \( \mathscr{A} \)-harmonic functions in a reversible Finsler manifold \( (\Omega, \fnf, \wghtvi) \) of dimension \( n \geq 2 \), provided that (under certain conditions on \( (\Omega, \fnf, \wghtvi) \) and the variable exponent \( p \)) for each compact subset \( K \) of \( \singset \), the \( \cpoin_1 - p^+_K + \alpha (p^+_K - 1) \)-Hausdorff measure of \( K \) is zero. Here, \( p_K^+ = \sup_K p \) and \( \cpoin_1 \) is chosen so that \( \wghtvi(B(\pvr, r)) \leq \conspoin r^{\cpoin_1} \) for every ball.

The estimates used to remove the singularities will focus on a family \( \{ \ue \}_{\ell \in \mathscr{J}} \subset \lwo \) that converges to \( u \) in a certain sense. As a second main result of this article, we will also obtain an estimate (when \( \lim_{d(\pvr , 0_{\Omega}) \rightarrow \infty} p = 1 \)) for
$$
\mu_{\ell}(B(\pvr , r)) := \sup \left\{ \int_{B(\pvr , r)} \mathscr{A}(\cdot , \grad \ue) \multg \deriv \zeta \; \dwvi \:|\: 0 \leq \zeta \leq 1 \text { and } \zeta \in C_0^{\infty}( B(\pvr , r) ) \right\},
$$
which is related to the measure \( \mu = \operatorname{div}(\mathscr{A} (\cdot , \grad u )) \).
\end{abstract}

\let\thefootnote\relax\footnote{2020 \textit{Mathematics Subject Classification}. 35J60; 35J70; 35J92; 58J05}
\let\thefootnote\relax\footnote{\textit{Keywords and phrases}. Variable exponent space; removable set; Hausdorff measure; Finsler manifold}


\section{Introduction}

In this paper, we study solutions to quasilinear elliptic equations of the form:
\begin{equation} \label{13}
-\operatorname{div} \mathscr{A} (\cdot , \grad u)=0 \quad \text { in } \quad \Omega.
\end{equation}
Throughout this work, \((\Omega, \fnf, \wghtvi)\) is a reversible Finsler manifold satisfying the properties \eqref{78} - \eqref{120} below, and the variable exponent \(p: \Omega \rightarrow (1,\infty)\) is continuous. Additionally, the operator \(\mathscr{A}: T\Omega \rightarrow T\Omega\) satisfies the following conditions for all functions \(u: \Omega \rightarrow \mathbb{R}\) that are differentiable in the distributional sense:
\begin{enumerate}[label=($a_{\arabic*}$)]
\item \label{16} $\left[ \mathscr{A}(\cdot , \grad u  )  \multg  \deriv u \right] (\pvr) \geq \consfi  | \grad u |_{\fnf} ^{p} (\pvr) $ for  a.e.  $\pvr \in \Omega$.

\item \label{21} $  |\mathscr{A}( \cdot , \grad u ) |_{\fnf} (\pvr )\leq \consfii  | \grad u | ^{p-1} _{\fnf}  (\pvr) $ for a.e.  $\pvr \in \Omega$.

\item \label{17} $\left\{ \left[  \mathscr{A} (\cdot , \grad u )-\mathscr{A}   (\cdot , \grad v  ) \right] \multg  \left(\deriv u  - \deriv v \right) \right\} (\pvr)>0$   for a.e. $\pvr \in \Omega$ whenever $\deriv u (\pvr ) \neq \deriv v (\pvr)$.
\end{enumerate}
Here, we write
$$
\fnf (\pvr , Y  ) = |Y|_{\fnf} (\pvr) \quad \text { and }  \quad \fnfs (\pvr , \omega  ) = |\omega|_{\fnfs} (\pvr),
$$
for $Y\in T_{\pvr} \Omega$ and $\omega \in T^\ast  _{\pvr}\Omega$. Furthermore, for a function \( f: \Omega \rightarrow \mathbb{R} \) that is differentiable in the distributional sense, we use the notation
$$
Y\multg \deriv f (\pvr)= \deriv f _{\pvr}(Y) \quad \text { for } \quad Y\in T_{\pvr} \Omega,
$$
where \( \deriv f_{\pvr} \in T_{\pvr}^\ast \Omega \) (or equivalently \( \deriv f(\pvr) \)) denotes the derivative of \( f \) at \( \pvr \).

For a relatively closed subset \(E \subset \Omega ^\prime \subset \mathbb{R}^n\) with \(s\)-dimensional Hausdorff measure zero (for \(s > n - p\)) and \(u\) \(p\)-harmonic in \(\Omega ^\prime \backslash E\), previous studies determined the values of \(s\) for which the extension of \(u\) is \(p\)-harmonic in all of \(\Omega ^\prime\). Classical results are given in \cite{carleson1967selected, kilpelainenzhong2001removable, koskelamartio1993removability}, and problems involving lower-order terms are discussed in \cite{hirata2011removable, ono2013removable}. In the nonstandard growth framework, removability problems were studied in variable exponent spaces by \cite{fu2015removable, latvala2011fundamental, alcon2023removable, apaza2023removable}, in Orlicz spaces by \cite{challallyaghfouri2011removable, chlebicka2021remmusielak}, and in double-phase spaces by \cite{chlebicka2020removable}. Building on these studies, we use the intrinsic capacities and Hausdorff measures introduced by \cite{baruah2018capacities} and \cite{defilippis2020manifold}, respectively.

Mäkäläinen in \cite{makalainen2008removable} studied $p$-harmonic functions in complete metric spaces $\tilde{\Omega}$, assuming the space has a doubling measure and supports a weak $(1, p)$-Poincaré inequality. To control the integrability of the derivative in the metric space setting, a substitute for Sobolev space is needed. The Newtonian space, introduced by Shanmugalingam in \cite{shanmugalingam2000newtonian}, is used. The following equation is studied:  
$$
\int_{\tilde \Omega}|D u|^{p-2} D u \cdot D \varphi \, \textnormal d \mu = 0,
$$
where $1 < p < \infty$ and $D$ denotes the derivative. The paper shows that sets of weighted $(-p + \alpha(p-1))$-Hausdorff measure zero are removable for $\alpha$-Hölder continuous Cheeger $p$-harmonic functions.


Kilpeläinen $\&$ Zhong in \cite{kilpelainenzhong2001removable} studied continuous solutions $u \in W_{\loc}^{1, p}(\Omega ^\prime )$ of the equation
$$
-\operatorname{div} A(x, \nabla u) = 0,
$$
where $\Omega ^\prime \subset \mathbb{R}^n$ is an open set and $1 < p < \infty$. These solutions are called $A$-harmonic in $\Omega ^\prime $. An example of such operators is the $p$-Laplacian
$$
-\Delta_p u = -\operatorname{div} \left( |\nabla u|^{p-2} \nabla u \right).
$$
They show that sets with $n - p + \alpha(p - 1)$ Hausdorff measure zero, where $0 < \alpha \leq 1$, are removable for $\alpha$-Hölder continuous solutions to quasilinear elliptic equations.  For the quasilinear case, Heinonen $\&$ Kilpeläinen \cite{heinonen1988superharmonic} proved similar results for $\alpha = 1$, and Trudinger $\&$ Wang \cite{trudinger2002weak} proved it under the assumption that $u$ has an $A$-superharmonic extension to $\Omega ^\prime $, which can be omitted for small $\alpha$. Koskela $\&$ Martio \cite{koskelamartio1993removability} proved a weaker version using Minkowski content instead of Hausdorff measure. Buckley and Koskela \cite{buckley1999fusion} also established special cases.

In relation to Hölder continuity of solutions, Lyaghfouri in \cite{lyaghfouri2010holdersuphar} showed that a solution of the equation $-\Delta_{p(x)} u = \hat \mu$ is Hölder continuous with exponent $\alpha$ if and only if the nonnegative Radon measure $\hat \mu$ satisfies the growth condition
$$
\hat \mu (B_r(x)) \leq C r^{n - p(x) + \alpha(p(x) - 1)},
$$
for any ball $B_r(x) \subset \Omega^\prime \subset \mathbb{R}^n$, with $r$ small enough. This extends a result of Lewy and Stampacchia for the Laplace operator and a result of Kilpeläinen and Zhong for the $p$-Laplace operator with constant $p$.

In \cite{lewystampac1970smoothness}, Lewy and Stampacchia studied the equation
\begin{equation}\label{126}
-\Delta u = \hat \mu.
\end{equation}
They proved the equivalence between the growth condition
$$
\hat\mu(B_r(x)) \leq C r^{n-2+\alpha},
$$
on the measure $\hat \mu$, and the Hölder continuity of the solutions with exponent $\alpha$, when $n > 2$. For $n = 2$, they proved that each solution of \eqref{126} is Hölder continuous with exponent $\beta$ for all $\beta \in (0, \alpha)$.

In \cite{kilpelainen1994holderquas}, Kilpeläinen considered the $p$-Laplace operator, where $p$ is a constant. He proved that if a solution to the equation
\begin{equation}\label{124}
-\Delta_p u = \hat \mu
\end{equation}
is Hölder continuous with exponent $\alpha$, then the measure $\hat \mu$ satisfies the growth condition
\begin{equation}\label{125}
\hat \mu (B_r(x)) \leq C r^{n-p+\alpha(p-1)}.
\end{equation}
Conversely he proved that if $\hat \mu$ satisfies \eqref{125}, then each solution of \eqref{124} is Hölder continuous with exponent $\beta$ for all $\beta \in(0, \alpha)$.

In \cite{kilpelainenzhong2001removable}, Kilpeläinen and Zhong improved the result in \cite{kilpelainen1994holdermeasu} by showing that if $\hat \mu$ satisfies \eqref{125}, then each solution of \eqref{124} is Hölder continuous with exponent $\alpha$. Rakotoson \cite{rakotoson1990equivalence} and Rakotoson $\&$ Ziemer \cite{rakotoson1990localstru} also obtained results in this direction.


Throughout this paper, a Finsler manifold will be denoted by $(\Omega, \fnf, \wghtvi)$, where $\wghtvi$ is a measure and $\fnf : T\Omega \rightarrow [0, \infty)$ is a function (called a Finsler structure) with the following properties:
\begin{enumerate}[label=($F_{\arabic*}$)]
\item $\fnf$ is $C^{\infty}( T\Omega \backslash \{0\} )$.

\item $\fnf (\pvr , t \xi)=t \fnf ( \pvr , \xi)$ for all $(\pvr, \xi) \in T \Omega$ and  $t\geq 0$.

\item \label{115} For each chart $\psi : \tilde U \subset \mathbb{R} ^n \rightarrow \psi (\tilde{U}) \subset \Omega$, the matrix
$$
g_{i j}(\pvr , y):=\left. \frac{\partial^2}{\partial \xi _i \partial \xi _j}\left(\frac{1}{2} \fnf ^2 (\pvr , d\psi _{\pvr} (\xi)) \right)\right|_{\xi = y}
$$
is uniformly elliptic in the sense that there exist positive constants \(\kappa_1\) and \(\kappa_2\) such that
\begin{equation*}
\kappa _1 \sum _{k=1}^n (\eta ^k) ^2 \leq  g_{i j}(\pvr , \xi) \eta ^ i \eta ^j \leq \kappa _2 \sum _{k=1}^n (\eta ^k)^2
\end{equation*}
holds for all \(\pvr \in \psi ( \tilde{U} ) \), \(\xi \in T_{\pvr} \Omega \backslash \{0\}\), and \(\eta = \eta^i \partial_i \in T_{\pvr} \Omega\), where \(\{\partial_i\}_{i=1}^n\) is the basis associated with the chart \(\psi\).
\end{enumerate}

If $\fnf(\pvr , Y)= \fnf (\pvr , - Y) $ for all  $(\pvr , Y)\in T\Omega$, we say that the Finsler manifold $(\Omega , \fnf)$ is {\it reversible}.


Throughout this paper, we fix a measure \( \wghtvi \) on \( \Omega \). For each point \( \pvr \in \Omega \), we assume there exists a neighborhood \( U \) with a local coordinate system \( \psi : \tilde{U} \to U \), and a measurable function \( \tilde \wghtvi : \tilde{U} \to \mathbb{R}^+ \) such that for some constant \( C > 0 \), we have 
\begin{equation}\label{118}
C \leq \tilde \wghtvi  \leq C^{-1}  \quad \text { in } \quad \tilde U .
\end{equation}
Additionally, for every measurable set \( E \subset U \), the following holds:
\begin{equation} \label{119}
\wghtvi(E) = \int_{\psi^{-1}(E)} \tilde \wghtvi \,  \dx.
\end{equation}

We work with the following conditions on \( \wghtvi \) and \( p \).
\begin{enumerate}[label=(\roman*)]
\item There is constants $\conspoin , \conspoin _0, \conspoin _1 , \cpoin _1 >0$  such that
\begin{gather} 
\wghtvi  (B(\pvr , r)) \leq \conspoin r^{\cpoin _1} , \label{78}\\
  \left[ \wghtvi  (B(\pvr  , r))\right]^{p^{-} _r - p^+ _r} \leq \conspoin _0 ,\label{74}
\end{gather}
and
\begin{equation}\label{33}
\wghtvi ( B(\pvr , 2r)) \leq \conspoin _{1} \wghtvi (B(\pvr , r)),
\end{equation}
for all $r>0$ and $\pvr \in \Omega$, where $p^- _r = \inf _{B(\pvr  , r)} p$ and $p^+ _r = \sup _{B(\pvr  , r)} p$. 

\noindent {\it  Remark:} Under certain conditions, the doubling condition \eqref{33} follows from the Bishop-Gromov-type volume comparison theorem. See \cite[Section 2.8]{mester2022functionalinequa} or \cite[Lemma 2.2]{kristaly2022nonlinear}.

\item  There are constants $1_{\wghtz} >1$ and $\conspoin  _{\wghtz}, \conspoin  _{\wghtzi} >0$ such that for all ball $B=B(\pvr , r)$:
\begin{equation} \label{26}
\left(  \fint _{B} |v|^{1_{\wghtz}} \, \dwvi  \right)^{\frac{1}{1_{\wghtz}}} \leq \conspoin  _{\wghtz}  r  \fint _{B} |\grad  v| _{\fnf} \, \dwvi  ,
\end{equation}
whenever   $v \in C_0^{\infty}(B)$.

Furthermore:
\begin{equation}\label{64}
\int_B |v-v_B| \, \dwvi  \leq \conspoin  _{\wghtzi} r \int_B|\grad  v|_{\fnf }\, \dwvi ,
\end{equation}
whenever  $v \in C^{\infty}(B)$ is bounded. 

Here we are using the notation:
$$
\fint _B f \, \dwvi := \frac{1}{\wghtvi (B)} \int_B f\, \dwvi \quad \text { and } \quad f_B :=\frac{1}{\wghtvi (B)} \int_B f\, \dwvi  .
$$

\item   For every bounded open set \( U \), there is a constant such that
\begin{equation}\label{120}
\|u\|_{p,U,\wghtvi} \leq C\|\fnf( \cdot , \grad u) \|_{p ,  U, \wghtvi} \quad \forall  u\in C^{\infty} _{0} (U).
\end{equation}

\end{enumerate}

Our first main result is as follows:
\begin{theorem}  \label{104}
Let \( (\Omega, \fnf, \wghtvi) \) be a reversible Finsler manifold, where \( \wghtvi \) and \( p \) satisfy conditions \eqref{118}–\eqref{120}. Let $\singset \subset \Omega$ be a closed set. Assume that $u \in C(\Omega)$ is a solution of \eqref{13} in $\Omega \backslash \singset$, and there exists $\alpha \in (0,1]$ such that for any $\pvr \in \singset$ and $\pvry \in \Omega$:
$$
|u(\pvr)-u(\pvry)| \leq Cd^\alpha (\pvr , \pvry).
$$

If for each compact subset $K$ of $\singset$, the $\cpoin _1 -p^+_{K} + \alpha (p^+_{K} -1 )$-Hausdorff measure of $K$ is zero, then $u$ is a solution of \eqref{13} in $\Omega$.
\end{theorem}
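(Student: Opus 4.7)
The plan is to prove that $u$ satisfies the weak formulation
\begin{equation*}
\int_\Omega \mathscr{A}(\cdot, \grad u) \multg \deriv \varphi \, \dwvi = 0 \quad \text{for every } \varphi \in C_0^{\infty}(\Omega),
\end{equation*}
by a cutoff-and-pass-to-the-limit scheme. Fix such a $\varphi$, set $K = \spt \varphi \cap \singset$, and abbreviate $s = \cpoin_1 - p^+_K + \alpha(p^+_K - 1)$, the critical exponent appearing in the statement. For each $\epsilon>0$ the hypothesis $\mathcal{H}^s(K)=0$ produces a finite cover of $K$ by balls $\{B(\pvr_i, r_i)\}$ with arbitrarily small radii and $\sum_i r_i^s < \epsilon$. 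I would then build a Lipschitz cutoff $\eta_\epsilon$ vanishing on $\bigcup_i B(\pvr_i, r_i)$, identically $1$ outside $\bigcup_i B(\pvr_i, 2r_i)$, and satisfying $|\grad \eta_\epsilon|_{\fnf} \leq C/r_i$ on each annular piece.

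Since $\varphi \eta_\epsilon$ has compact support in $\Omega \setminus \singset$, where $u$ is a genuine solution, one obtains
\begin{equation*}
\int_\Omega \eta_\epsilon \, \mathscr{A}(\cdot, \grad u) \multg \deriv \varphi \, \dwvi = -\int_\Omega \varphi \, \mathscr{A}(\cdot, \grad u) \multg \deriv \eta_\epsilon \, \dwvi .
\end{equation*}
Because $K$ is $\wghtvi$-null (its Hausdorff dimension is strictly below $\cpoin_1$) and $\bigcup_i B(\pvr_i, 2r_i)$ shrinks to $K$, dominated convergence, with dominating function $\consfii |\grad u|_{\fnf}^{p-1} |\grad \varphi|_{\fnf}$ (integrable on $\spt\varphi$ by \ref{21}), sends the left-hand side to $\int_\Omega \mathscr{A}(\cdot, \grad u) \multg \deriv \varphi \, \dwvi$. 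The proof thus reduces to showing that the right-hand side tends to zero.

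By \ref{21} and the construction of $\eta_\epsilon$,
\begin{equation*}
\left|\int_\Omega \varphi \, \mathscr{A}(\cdot, \grad u) \multg \deriv \eta_\epsilon \, \dwvi \right| \leq C \|\varphi\|_\infty \sum_i \frac{1}{r_i} \int_{B(\pvr_i, 2 r_i)} |\grad u|_{\fnf}^{p-1} \, \dwvi.
\end{equation*}
A Caccioppoli estimate on $B(\pvr_i, 2r_i)$, obtained by testing the equation against the product of $u - u(\pvr_i)$ and a standard cutoff and then inserting the Hölder bound $|u - u(\pvr_i)| \leq C r_i^{\alpha}$ on $B(\pvr_i, 4r_i)$ together with the volume growth \eqref{78}, gives
\begin{equation*}
\int_{B(\pvr_i, 2 r_i)} |\grad u|_{\fnf}^p \, \dwvi \leq C r_i^{\alpha p - p}\, \wghtvi(B(\pvr_i, 4r_i)) \leq C r_i^{\cpoin_1 + \alpha p - p}.
\end{equation*}
Hölder's inequality (using $\wghtvi(B(\pvr_i,2r_i))^{1/p} \leq C r_i^{\cpoin_1/p}$) converts this to the exponent $p-1$, and after replacing the pointwise $p$ by the constant $p_K^+$ via the variable-exponent condition \eqref{74}, the factor $1/r_i$ yields
\begin{equation*}
\frac{1}{r_i} \int_{B(\pvr_i, 2 r_i)} |\grad u|_{\fnf}^{p-1} \, \dwvi \leq C r_i^{\cpoin_1 - p_K^+ + \alpha(p_K^+-1)} = C r_i^s ,
\end{equation*}
so the total is bounded by $C \epsilon$ and the argument closes on letting $\epsilon \to 0$.

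The main obstacle is the Caccioppoli step: a priori $u$ is only continuous on $\Omega$, not yet known to lie in $\lwo(\Omega)$, so one cannot freely test the equation against $(u - u(\pvr_i))$ times a cutoff on balls that meet $\singset$. This is precisely where the approximation family $\{\ue\}_{\ell \in \mathscr{J}} \subset \lwo$ advertised in the abstract is designed to enter: the natural route is to establish the Caccioppoli-and-cutoff estimates uniformly for each $\ue$, exploiting a Hölder bound inherited from $u$, and only at the end pass to the limit in $\ell$. The remaining bookkeeping — converting between $p$, $p_K^+$, and $p_K^-$ via \eqref{74} and invoking the Poincaré--Sobolev inequalities \eqref{26}, \eqref{64}, \eqref{120} — is routine thanks to the doubling condition \eqref{33} and to the uniform ellipticity \ref{115} of the Finsler metric, which makes all the local computations equivalent to their Euclidean counterparts up to multiplicative constants.
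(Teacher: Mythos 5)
Your proposal follows the direct cutoff route, whereas the paper uses the Kilpeläinen--Zhong obstacle-problem sandwich: it solves the obstacle problem in $\mathscr{K}_u(\hat\Omega)$ to get $u_1\geq u$ and in $\mathscr{K}_{-u}(\hat\Omega)$ to get $-u_2\leq u$, shows via Corollary \ref{106} and a covering argument that the defect measure $\mu_1=-\operatorname{div}\mathscr{A}(\cdot,\grad u_1)$ vanishes on $\singset\cap\Omega_0$ (and separately off $\singset$), and then forces $u_1=-u_2=u$ by the monotonicity \ref{17}. These are genuinely different strategies, and yours has a gap that the paper's strategy is specifically designed to avoid.

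The gap is the Caccioppoli step, and it is not merely technical. To test the equation against $(u-u(\pvr_i))\eta^{p}$ on $B(\pvr_i,4r_i)$ you need this function to be an admissible test function, i.e.\ supported in $\Omega\backslash\singset$ where $u$ is actually a solution; but the whole point of these balls is that they cover $K\subset\singset$, so the test function's support meets $\singset$ and the identity you want to integrate by parts is simply not available there. Underneath this is the a priori regularity problem you yourself flag: $u$ is only known to lie in $W^{1,p(\pvr)}_{\loc}(\Omega\backslash\singset;\wghtvi)$, and since the critical dimension $\cpoin_1-p^+_K+\alpha(p^+_K-1)$ exceeds $\cpoin_1-p^+_K$ for $\alpha>0$, the hypothesis does \emph{not} imply that $\singset$ has zero $p$-capacity, so $u$ need not extend to a Sobolev function across $\singset$ and $|\grad u|_{\fnf}^{p-1}$ need not be locally integrable near $K$. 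This invalidates both the Caccioppoli bound and the dominated-convergence step on the left-hand side (whose proposed dominating function $\consfii|\grad u|_{\fnf}^{p-1}|\grad\varphi|_{\fnf}$ is exactly what is not yet known to be integrable on $\spt\varphi$). Your suggestion to route the estimate through the family $\{\ue\}$ does not close this: that family enters only Theorem \ref{81} in the paper, and no uniform-in-$\ell$ Caccioppoli estimate across $\singset$ is produced by your argument. The paper circumvents all of this because the obstacle-problem solutions $u_1,u_2$ are globally in $W^{1,p(\pvr)}(\hat\Omega;\wghtvi)$ by construction, are supersolutions everywhere, and the Hölder growth of the obstacle is converted into the measure bound $\mu_1(B(\pvr,r))\leq Cr^{\cpoin_1-p(\pvr)+\alpha(p(\pvr)-1)}$ of Corollary \ref{106} without ever testing the original equation for $u$ on balls meeting $\singset$.
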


Next, we consider a family \(\{ \vue \}_{\ell \in \mathscr{J}} \subset \lwo \) and an element \(\vuu \in \lwo \). For the following result, we assume the properties:
\begin{enumerate}[label=($P_{\arabic*}$)]
\item \label{94} For all $\ell \in \mathscr{J}$ we have
\begin{equation}\label{10}
\int_{\Omega}  \mathscr{A}(\cdot, \grad  \vuu) \multg   \deriv (\vue - \vuu )  \, \dwvi   =0 
\end{equation}
and, for some positive constants $\vinf _0  $ and $\vinf$,
\begin{equation} \label{28}
 |\vue - \vuu|\leq \vinf _0  \quad \text {  and  } \quad |\mathscr{A}(\cdot, \grad \vue) -  \mathscr{A}(\cdot, \grad \vuu) |_{\fnf}\leq \vinf \leq \frac{\consfi }{4}   \quad \text { in }  \Omega.
\end{equation}

\item \label{93} There exist  functions \( \Upsilon _i: \mathbb{R}^+ \rightarrow [0,\infty)\), $i=1,2$, such that, for all \( \ell \in \mathscr{J} \) and for all \( \zeta \in C_0^\infty(B(\pvr, r)) \) with \( B(\pvr, r) \subset \subset \Omega \) and \( 0 \leq \zeta \leq 1 \), the following holds:
\begin{gather*}
\int _{B(\pvr , r)} \left[ \mathscr{A}( \cdot , \grad  \vuu ) - \mathscr{A}( \cdot , \grad  \vue ) \right] \multg  \deriv \zeta  \, \dwvi  \leq \Upsilon _1 ( \pvr , r)  \quad  \text { and }\\
-\int _{B(\pvr , r)} \left[ \mathscr{A}( \cdot , \grad  \vuu ) - \mathscr{A}( \cdot , \grad  \vue )  \right] \multg  \deriv \zeta  \, \dwvi  \leq \Upsilon _2 ( \pvr , r) \quad  \text { if } \quad  B(\pvr , r)\subset \{ \vuu > \obsi \}
\end{gather*}
\end{enumerate}

\begin{enumerate}[label=($P_{\arabic*}$)]
\setcounter{enumi}{2}
\item \label{103} Let \( 0_{\Omega} \in \Omega \) be fixed. Assume that \( p \in C(\Omega) \) satisfies \( 1 < p < p^+ := \sup_{\Omega} p < \infty \) in \( \Omega \), and that $\lim_{d(\pvr, 0_{\Omega}) \to \infty} p(\pvr) = 1$, and, for some constant $\tilde{\conspoin} _1>0$,
\begin{equation} \label{70}
\frac{\sup _{B(\pvr _0 , r)} p -1}{\inf _{B(\pvr _0 , r)} p  -1} \leq \tilde{\conspoin} _1 \quad \forall B(\pvr _0 , r ) \subset \Omega.
\end{equation}

\end{enumerate}

For every open set $V\subset \subset \Omega$, we define
$$
\mu _{\ell}(V):=\sup \left\{\int_V  \mathscr{A}(\cdot , \grad  \vue)  \multg  \deriv \zeta  \, \dwvi  \:|\: 0 \leq \zeta \leq 1 \text { and } \zeta \in C_0^{\infty}(V)\right\} .
$$

Our second main result is the following
\begin{theorem} \label{81} 
Let \( (\Omega, \fnf, \wghtvi) \) be a reversible Finsler manifold, where \( \wghtvi \) and \( p \) satisfy conditions \eqref{118}–\eqref{64}. Let \( K \) be a compact subset of \( \Omega \), and define \( K_\delta := \{\pvr \in \Omega \:|\: d(\pvr, K) < \delta\} \), where \( \overline{K_\delta} \subset \Omega \). Suppose that \( \vartheta \in \mathscr{K}_{\obsi}(\Omega) \) is a solution to the obstacle problem \eqref{4}, with the obstacle \( \obsi \in C(\Omega) \) satisfying
\[
|\obsi(\pvr) - \obsi(\pvry)| \leq \conrem \, d^\alpha(\pvr, \pvry), \quad \text { for all } \pvr \in K, \pvry \in \Omega,
\]
where \( 0 < \alpha \leq 1 \) and \( \conrem > 0 \).

Additionally, let \( \{ \vartheta_\ell \} \subset \mathscr{K}_{\obsi}(\Omega) \) be a family such that \( \{\vuu = \obsi\} \subset \{\vue = \obsi\} \) for all \( \ell \), and suppose that this family satisfies properties \ref{94} and \ref{93}, with \( p \in C(\Omega) \) satisfying property \ref{103}.

There exists $ R_1= R_1 ( \consfi  , \consfii  , p^+ , \tilde \conspoin _1 , \conspoin _1 , \conspoin _2  ,  \conspoin _3 , \conspoin _4 , \conspoin _{\wghtzi} , \conspoin _0 , \vinf _0 ,  n , s , \cpoin _1 , \inf _{B(\pvr , 5r)} \obsi , \sup _{B(\pvr , 5r)} \obsi)>0$, where $s>\sup _{K_\delta} p - \inf _{K_\delta} p$, such that $p-1\leq 1$ in $\Omega \backslash B(0_{\Omega} , R_1)$.  Furthermore, for every $\pvr \in K \cap( \Omega \backslash B(0_{\Omega} , R_1) )$ and $0<r< \frac{1}{41}\min \{\rast , \delta\}$, the following  holds:
\begin{align*}
&\mu_\ell(B(\pvr, r)) \\
& \quad \ \leq C _1 C_2^{\frac{\tilde{\conspoin} _1}{p^-_{ 41r} -1}}   (p^-_{41r}-1)^{-\tilde{\conspoin} _1} r^{\cpoin _1 -p(\pvr) + \alpha (p(\pvr) -1 )  
} + \Upsilon _1 ( \pvr , 5r) + \Upsilon _2 (\pvr , r), 
\end{align*}
if $B(\pvr , 41 r )\subset \subset  \psi (\tilde U)$, where $\psi : \tilde U \rightarrow \psi (\tilde U )\subset \Omega$ is a chart,  {\footnotesize $p^-_{41r} := \inf _{B(\pvr , 41r)}p$,    $C_i:= C_i( \consfi  , \consfii  , p^+ , \tilde \conspoin _1 , \conspoin _1 , \conspoin _2 , \conspoin _3 ,$ $ \conspoin _4 , \conspoin _{\wghtz} , \conspoin _{\wghtzi} , $ $\conspoin _0 , \conspoin , n , s , \gamma , 1_{\wghtz}, \cpoin _1 , \cpoin _2 , \alpha , \conrem , $ $\inf _{B(\pvr , 5r)} \obsi , \sup _{B(\pvr , 5r)} \obsi , \consman _1 ,  \vinf _0 , \| \vuu  \|_{s , B(\pvr  , 41r) , \wghtvi } , $ $ \| \vuu  \|_{s \gamma ^\prime , B(\pvr  , 41r) , \wghtvi } )>0$,  for $i=1,2$, and $1<\gamma<1_{\wghtz}$.  Also, \(\consman_1\) is a constant (depending on \(\psi\) and \(\fnf\)) given in \eqref{114}, and $\rast$ is defined in Lemma \ref{128} \ref{76}.}
\end{theorem}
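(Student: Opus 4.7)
The plan is to control $\mu_{\ell}(B(\pvr,r))$ by isolating the obstacle-problem measure associated with $\vuu$ and absorbing the discrepancy between $\vue$ and $\vuu$ via property \ref{93}. For a test function $\zeta\in C_0^\infty(B(\pvr,r))$ with $0\le\zeta\le 1$, I would write
\[
\int_{B(\pvr,r)}\mathscr{A}(\cdot,\grad\vue)\multg\deriv\zeta\,\dwvi
=\int_{B(\pvr,r)}\mathscr{A}(\cdot,\grad\vuu)\multg\deriv\zeta\,\dwvi
-\int_{B(\pvr,r)}\bigl[\mathscr{A}(\cdot,\grad\vuu)-\mathscr{A}(\cdot,\grad\vue)\bigr]\multg\deriv\zeta\,\dwvi.
\]
By the two halves of \ref{93}, the second integral is bounded above by $\Upsilon_2(\pvr,r)$ when $B(\pvr,r)\subset\{\vuu>\obsi\}$, and in the general case by an $\Upsilon_1(\pvr,5r)$ contribution obtained by extending $\zeta$ by zero to a cutoff supported in $B(\pvr,5r)$ and invoking the first inequality of \ref{93}. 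The heart of the proof is then the estimate $\int\mathscr{A}(\cdot,\grad\vuu)\multg\deriv\zeta\,\dwvi\le C_1 C_2^{\tilde{\conspoin}_1/(p^-_{41r}-1)}(p^-_{41r}-1)^{-\tilde{\conspoin}_1}\,r^{\cpoin_1-p(\pvr)+\alpha(p(\pvr)-1)}$.

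For that main estimate I would use the obstacle inequality with a carefully chosen admissible competitor. Pick $\phi\in C_0^\infty(B(\pvr,5r))$ with $0\le\phi\le 1$, $\phi\equiv 1$ on $B(\pvr,r)$, $|\grad\phi|_{\fnf}\le c/r$, and set $L:=\obsi(\pvr)+\conrem(5r)^\alpha$. Since $\pvr\in K$, the Hölder hypothesis on $\obsi$ forces $L\ge \sup_{B(\pvr,5r)}\obsi$, so $v:=\vuu+\phi(L-\vuu)^{+}\ge\obsi$ is admissible. Testing the obstacle inequality against $v-\vuu$ and using \ref{16}--\ref{21} and the pointwise bound $(L-\vuu)^{+}\le L-\obsi\le 2\conrem(5r)^\alpha$ on $\{\vuu<L\}$ yields the Caccioppoli-type inequality
\[
\int_{B(\pvr,5r)\cap\{\vuu<L\}}\phi\,|\grad\vuu|_{\fnf}^{p}\,\dwvi
\le \tfrac{\consfii}{\consfi}\int_{B(\pvr,5r)}(L-\vuu)^{+}\,|\grad\vuu|_{\fnf}^{p-1}\,|\grad\phi|_{\fnf}\,\dwvi.
\]
Applying Young's inequality with variable exponent (with conjugate exponents built from $p^-_{41r}$) absorbs the $|\grad\vuu|^{p-1}$ factor into the left-hand side. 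Since by \ref{103} the exponent $p$ is arbitrarily close to $1$ outside a large ball, the Young constant behaves like $(p^-_{41r}-1)^{-1/(p^-_{41r}-1)}$, which is exactly what produces the prefactor $C_2^{\tilde{\conspoin}_1/(p^-_{41r}-1)}(p^-_{41r}-1)^{-\tilde{\conspoin}_1}$. Together with the volume bound \eqref{78}, the doubling \eqref{33}, and the oscillation controls \eqref{74}, \eqref{70} (which let one replace $p^{\pm}_{41r}$ by $p(\pvr)$ in exponents up to constants), this gives $\int\phi|\grad\vuu|_{\fnf}^{p}\dwvi \lesssim r^{\alpha p(\pvr)-p(\pvr)+\cpoin_1}$.

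To convert the gradient estimate into the desired measure bound, I would apply Hölder's inequality,
\[
\int_{B(\pvr,r)}\mathscr{A}(\cdot,\grad\vuu)\multg\deriv\zeta\,\dwvi
\le \consfii\,\bigl\|\,|\grad\zeta|_{\fnf}\,\bigr\|_{p,B(\pvr,r),\wghtvi}\,\bigl\|\,|\grad\vuu|_{\fnf}\,\bigr\|_{p,B(\pvr,5r),\wghtvi}^{\,p-1},
\]
and combine with $|\grad\zeta|_{\fnf}\le c/r$, \eqref{78} and the gradient estimate above; a direct computation produces the exponent $\cpoin_1-p(\pvr)+\alpha(p(\pvr)-1)$. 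The condition $\lim_{d(\pvr,0_\Omega)\to\infty}p=1$ is used to fix the threshold $R_1$ outside of which $p-1\le 1$, which is needed for the Young-absorption step to be valid. The main obstacle I anticipate is precisely the variable-exponent Young inequality in the regime $p\to 1$: tracking how the absorption constant depends on $p^-_{41r}$ is what forces the exponential factor $C_2^{\tilde{\conspoin}_1/(p^-_{41r}-1)}$ and imposes the hypothesis \eqref{70}. By comparison, the Finsler geometry is a technicality rather than a fundamental difficulty, since condition \ref{115} reduces all local estimates to their Euclidean counterparts on a chart, up to the constant $\consman_1$ of \eqref{114}. The overall scheme is the variable-exponent, Finsler-manifold analogue of the Kilpel\"ainen--Zhong measure estimate from \cite{kilpelainenzhong2001removable}, adapted to the approximating family $\{\vue\}$ through the error terms $\Upsilon_1,\Upsilon_2$ of \ref{93}.
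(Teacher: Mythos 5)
Your plan has two genuine gaps. First, the opening decomposition does not close in the only nontrivial case. You need an upper bound for $-\int_{B(\pvr,r)}[\mathscr{A}(\cdot,\grad\vuu)-\mathscr{A}(\cdot,\grad\vue)]\multg\deriv\zeta\,\dwvi$, but property \ref{93} only provides such a bound (by $\Upsilon_2$) when $B(\pvr,r)\subset\{\vuu>\obsi\}$; the first inequality of \ref{93} controls the integral with the \emph{opposite} sign, and enlarging the ball to $B(\pvr,5r)$ does not change that sign. So precisely when $B(\pvr,r)$ meets the contact set — the case where the obstacle actually contributes — your reduction to estimating $\int\mathscr{A}(\cdot,\grad\vuu)\multg\deriv\zeta$ is not justified. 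The paper avoids this by working with $\vue$ throughout: it picks a contact point $\pvr_0$ of $\vuu$ (hence of $\vue$, via the hypothesis $\{\vuu=\obsi\}\subset\{\vue=\obsi\}$, which your argument never uses), passes to $\mu_\ell(B(\pvr_0,2r))$, and compares $\zeta$ with a cutoff $\eta^{p^+_{4r}}\geq\zeta$ so that the discrepancy is tested against the \emph{nonnegative} function $\eta^{p^+_{4r}}-\zeta$, where the first inequality of \ref{93} and the supersolution property of $\vuu$ do apply and produce the $\Upsilon_1(\pvr,5r)$ term.

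Second, your Caccioppoli inequality controls $|\grad\vuu|^{p}$ only on the sublevel set $\{\vuu<L\}$, whereas the final Hölder step requires $\|\,|\grad\vue|_{\fnf}\|_{p}$ over the whole annulus where $\deriv\eta\neq0$; nothing in your argument controls the gradient where the function exceeds $L$. The missing ingredient is the oscillation estimate near a contact point, $\sup_{B(\pvr_0,2r)}\vue-\inf_{B(\pvr_0,2r)}\vue\leq C(\operatorname{osc}_{B(\pvr_0,4r)}\obsi+r)\lesssim r^{\alpha}$ (inequality \eqref{97} in the paper), which lets one take $N=\sup_{B(\pvr_0,4r)}\vue$ in the full-ball Caccioppoli estimate \eqref{90}. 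Establishing \eqref{97} is the analytic core of the proof: it needs the local boundedness estimate for solutions of the obstacle problem (Proposition \ref{67}, Lemma \ref{100}) \emph{and} the weak Harnack inequality for the supersolutions $\vue$ (Proposition \ref{84}, Lemmas \ref{82} and \ref{85}, via the reverse Hölder/John--Nirenberg argument). It is from the Moser-iteration constants in these Harnack estimates — the exponents $\frac{1}{(p^-_{40r}-1)^2}$ and $\frac{1}{p^-_{40r}-1}$ in $C_5$ — that the factors $C_2^{\tilde{\conspoin}_1/(p^-_{41r}-1)}(p^-_{41r}-1)^{-\tilde{\conspoin}_1}$ arise, not from a variable-exponent Young inequality as you suggest; hypothesis \ref{103} and the radius $R_1$ enter to keep these constants quantified as $p\to1$. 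Without the oscillation/Harnack step your argument cannot produce the stated exponent or the stated constants.
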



In this work, we employ the techniques from \cite{fu2015removable, lyaghfouri2012remharmonic, ono2013removable}, which address the case where the norm and the domain are in $\mathbb{R}^n$. This paper is organized as follows. In Section \ref{121}, we provide the necessary definitions of variable exponent Lebesgue and Sobolev spaces on Finsler manifolds. In this section, we also define $\mathscr{A}$-harmonic functions, supersolutions, and solutions to the obstacle problem. Additionally, we present some basic results that will be used throughout the paper. In Section \ref{122}, we provide estimates for solutions of the obstacle problem and for supersolutions. Finally, in Section \ref{123}, we study removable sets for $\mathscr{A}$-harmonic functions and prove Theorems \ref{104} and \ref{81}.

\section{Preliminary Results and Definitions}\label{121}

\subsection{Finsler Manifolds}

We recall some facts and notation about Finsler manifolds. Most properties can be found in \cite{farkas2015singular, mester2022functionalinequa,  ohta2009heatfinsler, xiong2020uniquenessnonnegative}.

\subsubsection{The Legendre Transform}
The polar transform (or co-metric) \( \fnfs : T^\ast \Omega \rightarrow [0, \infty) \) is defined as the dual metric of \( \fnf \), given by the expression:
\[
\fnfs (\pvr , \omega ) = \sup_{v \in T_{\pvr} \Omega \backslash  \{0\}} \frac{\omega (v)}{\fnf (x, v)},
\]
where \( T^\ast \Omega = \cup_{\pvr \in \Omega} T_{\pvr} ^\ast \Omega \) is the cotangent bundle of \( \Omega \), and \( T_{\pvr}^\ast \Omega \) is the dual space of \( T_\pvr \Omega \).

If $\pvr \in \psi (\tilde U)\subset \Omega$, where $\psi$ is a chart. We have that for all $\omega \in T^\ast _{\pvr} \Omega$ there exist a unique $(\omega _1 , \ldots , \omega _n)\in \mathbb{R}^n$ such that 
$$
\omega (d \psi _{\pvr} ( v )) = \langle (\omega _1 , \ldots , \omega _n) , v \rangle   \quad \forall v\in \mathbb{R}^n,
$$
where $\langle \cdot , \cdot \rangle$ is the Euclidean metric.  In these local coordinates, for simplicity, we write \( \fnfs (\pvr , \omega) = \fnfs (\pvr , (\omega_1 , $ $\ldots , \omega_n)) \).

Since \( \fnf ^{\ast 2}(\pvr , \cdot) \) is twice differentiable on \( T_{\pvr}^\ast \Omega\backslash \{0\} \), one can define the Hessian (dual) matrix $[ g^\ast _{ij}(\pvr , \omega )]$, where
\[
 g^\ast _{ij}(\pvr , \omega ):=  \frac{1}{2} \frac{\partial^2}{\partial \omega _i \partial \omega _j } \fnf ^{\ast 2}\big(\pvr , (\omega_1 , \ldots , \omega_n)    \big)   .
\]

Using the strong convexity assumption on the Finsler structure \(\fnf\), the Legendre transform \( J^\ast : T^*\Omega  \rightarrow T\Omega \) is defined as follows: for each fixed \( \pvr \in \Omega  \), \( J^\ast \) associates to each \( \omega  \in T_{\pvr }^\ast \Omega  \) the unique maximizer \( v \in T_{\pvr} \Omega \) of the mapping:
\[
v \mapsto \omega (v) - \frac{1}{2} \fnf ^2(\pvr, v).
\]

Note that if \( J^\ast ( \pvr , \omega ) = (\pvr , v) \), then the following relations hold:
\[
\fnf (\pvr , v) = \fnfs (\pvr  , \omega ) \quad \text { and } \quad \omega (v) = \fnfs (\pvr  , \omega ) \fnf (\pvr , v).
\]

In local coordinates,  we have:
\[
J^\ast (\pvr , \omega ) = \sum_{i=1}^n \frac{\partial}{\partial \omega _i} \left( \frac{1}{2} \fnf ^{\ast 2}(\pvr , \omega ) \right) \partial _i,
\]
where \(\{\partial_i\}_{i=1}^n\) is the basis associated with the chart \(\psi\).

\subsubsection{Gradient and Distance Functions}

For a weakly differentiable function \( u: \Omega \rightarrow \mathbb{R} \), the gradient vector at \( \pvr \) is defined by
$$
\grad u(\pvr):=J^*(\pvr, \deriv u(\pvr)),
$$
for every regular point \( \pvr \in \Omega \), where the derivative \( \deriv u(\pvr) \in T_{\pvr}^\ast \Omega \) is well-defined. By applying the properties of the Legendre transform, it follows that
\begin{equation} \label{113}
 \fnfs  (\pvr , \deriv v (\pvr))=\fnf  (\pvr , \grad v (\pvr))
\end{equation}

In a local coordinate system, if \( \pvr \in \psi(\tilde{U}) \), we have 
$$
\deriv u(\pvr)=\sum_{i=1}^n \left( \partial _i (u)  \partial _i ^\ast \right)(\pvr) \quad \text { and } \quad \grad u(\pvr)=\sum_{i, j=1}^n \left(  g_{i j}^\ast (\cdot  , \deriv u) \partial _i(u) \partial _j \right)(\pvr) ,
$$
where $\partial ^\ast _i (\pvr) \in T^\ast _{\pvr} \Omega$ is defined by $\partial ^\ast _i (\pvr)(\partial _j (\pvr)):= \delta _{ij}$.

For a differentiable vector field $Y : \Omega \rightarrow T\Omega$ on $\Omega$, we define its divergence $\operatorname{div} Y: \Omega \to \mathbb{R}$ through the identity
$$
\int_{\Omega} u \operatorname{div}  Y \, \dwvi =-\int_{\Omega} \deriv u (Y) \, \dwvi.
$$

We observe that the nonlinearity of the Legendre transform extends to the gradient vector, namely \(\grad (u+v) \neq \grad u + \grad v\) in general. For the same reason, at points \(\pvr\) where \(\grad u(\pvr) = 0\), the gradient vector field \(\grad u\) is, in general, not differentiable (even if \(u\) is smooth), but only continuous.




We define the distance function $d : \Omega \times \Omega \rightarrow [0,\infty)$ by
$$
d(\pvr , \pvry)=\inf _\gamma \int_a^b \fnf( \gamma (t) , \gamma^\prime (t)) \, \dt,
$$
where the infimum is taken over all piecewise differentiable curves \(\gamma : [a,b] \to \Omega\) with \(\gamma(a) = \pvr\) and \(\gamma(b) = \pvry\). This definition is equivalent to
$$
d(\pvr, \pvry):=\sup \left\{u(\pvry)-u(\pvr) \:|\: u \in C^1(\Omega), \fnf ( \pvrz, \grad u(\pvrz)) \leq 1 \text { for all } \pvrz \in \Omega\right\} .
$$

 For a fixed \(\pvry \in \Omega\), the distance function \(\pvr \mapsto d(\pvry, \pvr)\) satisfies \(\fnf(\pvr, \grad d(\pvry, \pvr)) = 1\) for almost every \(\pvr \in \Omega\). Moreover, the distance function \(d\) possesses the following properties of a metric:
\begin{enumerate}
\item[$(a)$] \(d(\pvr, \pvry) \geq 0\) for all \(\pvr, \pvry \in \Omega\), and \(d(\pvr, \pvry) = 0\) if and only if \(\pvr = \pvry\).

\item[$(b)$] \(d(\pvr, \pvrz) \leq d(\pvr, \pvry) + d(\pvry, \pvrz)\) for all \(\pvr, \pvry, \pvrz \in \Omega\).
\end{enumerate}

However, in general, the distance function is not symmetric. In fact, \(d(\pvr, \pvry) = d(\pvry , \pvr)\) for all \(\pvr, \pvry \in \Omega\) if and only if \((\Omega , \fnf)\) is a reversible Finsler manifold.


\subsection{Variable exponent Sobolev spaces}

Next, we recall some key facts and notation regarding variable exponent Lebesgue and Sobolev spaces. A detailed discussion of the properties of these spaces can be found in \cite{gaczkowski2013sobolev, gaczkowski2016sobolev} for Riemannian manifolds, \cite{futamura2006sobolev, harjulehto2006sobolev, harjulehto2006variablemetric} for metric spaces, and \cite{diening2011lebesgue, kovuavcik1991spaces} for the Euclidean setting.

Let $p : \Omega \rightarrow (1,\infty)$ be a continuous function, and let $\hat \Omega  \subset \Omega$ be a open subset. The variable exponent, or generalized Lebesgue space $L^{p(\pvr)}(\hat \Omega ; \wghtvi)$, is the space of all measurable functions \( u : \hat \Omega \rightarrow \mathbb{R} \) for which the functional
$$
\rho_{p}(u):=\int_{\hat \Omega}|u|^{p} \, \dwvi
$$
is finite. This is a special case of an Orlicz-Musielak space, see \cite{musielak2006orlicz}.

 The functional \( \rho_{p} \) is convex and is sometimes called a convex modular \cite{kovuavcik1991spaces}. The space \( L^{p(\pvr)}(\hat \Omega; \wghtvi) \) is a Banach space with respect to the Luxemburg-Minkowski-type norm
$$
\|u\|_{p , \hat \Omega , \wghtvi}:=\inf  \left\{t>0  \:|\: \rho_{p} \left(\frac{u}{t} \right) \leq 1  \right\} .
$$
The space $L^{p(\pvr)}(\Omega ; \wghtvi)$ is reflexive if $1<p^-_{\hat \Omega} \leq p^+_{\hat \Omega} <\infty$, see \cite[Proposition 2.3]{harjulehto2006variablemetric}, where $p^-_{\hat \Omega} := \inf _{\hat \Omega} p$ and $p^+_{\hat \Omega} := \sup _{\hat \Omega} p$.

In what follows, we will use the generalized Hölder inequality:
\begin{equation}\label{116}
\int_{\hat \Omega }  |uv| \, \dwvi \leq 2 \|u\|_{p,\hat \Omega , \wghtvi} \|v\|_{p^{\prime} ,\hat \Omega , \wghtvi},
\end{equation}
as stated in \cite[Theorem 2.1]{kovuavcik1991spaces}.

To compare the functionals $\|\cdot \|_{p , \hat \Omega ,\wghtvi}$ and $\rho_{p}$, we have the following relations:
\begin{equation}\label{117}
\min  \{\rho_{p}(u)^{1 / p^{-}}, \rho_{p}(u)^{1 / p^{+}} \} \leq\|u\|_{p,\hat \Omega , \wghtvi} \leq \max  \{\rho_{p} (u) ^{1 / p^{-}}, \rho_{p}(u)^{1 / p^{+}} \}.
\end{equation}


The variable exponent Sobolev space $W^{1, p(\pvr)}(\hat \Omega ; \wghtvi)$ is defined by
$$
W ^ {1, p(\pvr)}(\hat \Omega ; \wghtvi)  :=\left\{u \in L^{p(\pvr)} (\hat \Omega ; \wghtvi) \:|\: \int_{\hat \Omega} \fnf (\cdot , \grad u)^p \, \dwvi<\infty \right\} .
$$
This space is a Banach space with the norm
$$
\|u\|_{1, p , \hat \Omega , \wghtvi}:= \|u\|_{p , \hat \Omega , \wghtvi} + \|\fnf ( \cdot ,  \grad u) \|_{ p , \hat \Omega , \wghtvi}.
$$
We also define  $W^{1 , p(\pvr)} _0(\hat \Omega ; \wghtvi)$ as the closure of $C^{\infty} _{0} (\hat \Omega)$ in  $W^{1 , p(\pvr)} (\hat \Omega ; \wghtvi)$ with respect to the norm  $\|\cdot\|_{1, p , \hat \Omega , \wghtvi}$.



Note that the classes \( L_{\loc}^{p(\pvr)}(\Omega ; \wghtvi) \) and \( W_{\loc}^{1, p(\pvr)}(\Omega; \wghtvi) \) depend only on the manifold structure of \( \Omega \) (not on the Finsler structure \( \fnf \) and the measure \( \wghtvi \)); see the last paragraph on page 1394 of \cite{ono1997solutionsquasi}. More precisely, let \( \hat \Omega \subset \subset \Omega \) be an open set, and consider two measures \( \wghtv_1 \) and \( \wghtv_2 \) satisfying \eqref{118} and \eqref{119}. These measures are comparable in \( \hat \Omega \). In other words, there exists a constant \( C > 0 \) such that
$$
C \wghtv _1 (E) \leq \wghtv _2 (E) \leq C^{-1} \wghtv _1 (E),
$$
for every measurable subset \( E \subset \hat \Omega \). This implies that \( \int_{\hat \Omega} |u|^p  d\wghtv_1 < \infty \) if and only if \( \int_{\hat \Omega} |u|^p  d\wghtv_2 < \infty \). Thus, \( L_{\loc}^p(\Omega; \wghtvi) \) does not depend on the measure \( \wghtvi \).

For \( W_{\loc}^{1, p(\pvr)}(\Omega, \wghtvi) \), we further observe that for any two Finsler structures \( \fnf_1 \) and \( \fnf_2 \), by 1-homogeneity and property \ref{115}, we have
$$
c \fnf_1(\cdot, \grad u) \leq \fnf_2(\cdot, \grad u) \leq c^{-1} \fnf_1(\cdot, \grad u) \quad \text { in }  \hat \Omega,
$$
for some constant \( c > 0 \). Therefore, \( W_{\loc}^{1, p(\pvr)}(\Omega; \wghtvi) \) does not depend on either the measure \( \wghtvi \) or the Finsler structure \( \fnf \).

We will finish this subsection with the following definitions:
  \begin{definition} We say that a continuous function $u\in W_{\loc}^{1, p(\pvr)}( \Omega \backslash \singset ; \wghtvi )$ is an \( \mathscr{A} \)-harmonic function in $ \Omega \backslash \singset$ if,  for every test function $\varphi \in W^{1, p(\pvr)}(\Omega \backslash \singset ;   \wghtvi  )$ with compact support, the following holds:
$$
\int_{\Omega}   \mathscr{A}(\cdot , \grad u)  \multg  \deriv \varphi \, \dwvi   =0 .
$$
\end{definition}

\begin{definition} 
We say that $u \in \lwo$ is a \textnormal{supersolution} of \eqref{13} in $\Omega$ if, for any nonnegative test function $\varphi \in W^{1, p(\pvr)}(\Omega ; \wghtvi )$ with compact support, the following holds:
\begin{equation} \label{14}
\int_{\Omega} \mathscr{A}( \cdot , \grad u) \multg  \deriv \varphi  \, \dwvi   \geq 0 .
\end{equation}

Additionally, \( u \) is a \textnormal{subsolution} in \( \Omega \) if \( -u \) is a supersolution in \( \Omega \), and \( u \) is a \textnormal{solution} in \( \Omega \) if it is both a supersolution and a subsolution in \( \Omega \).
\end{definition}

\begin{definition}
Let \( \singset \) be a closed subset of \( \Omega \). We say that \( \singset \) is a removable set for Hölder continuous \( \mathscr{A} \)-harmonic functions if, for any Hölder continuous function \( u : \Omega \rightarrow \mathbb{R} \), the following holds: If \( u \) is \( \mathscr{A} \)-harmonic in \( \Omega \backslash \singset \), then \( u \) is \( \mathscr{A} \)-harmonic in \( \Omega \).
 \end{definition}


\subsection{Metric Spaces}


We assume  that $X=(X, d, \mu)$ is a metric space endowed with a metric $d$ and a positive complete Borel measure $\mu$ such that
$$
\mu(B)<\infty \quad \text { for all balls } B \subset X.
$$

The measure $\mu$ is said to be {\it doubling} if there exists a constant $C_\mu \geq 1$, called the doubling constant of $\mu$, such that for all balls $B$,
$$
0<\mu(2 B) \leq C_\mu \mu(B)<\infty .
$$

\begin{definition} Let
$$
\|f\|_{\mathrm{BMO}(W ; X)}:=\sup _{B \subset W} \fint _B\left|f-f_B\right| \, \textnormal{d}\mu,
$$
where the supremum is taken over all balls \( B \subset \Omega \) (and we implicitly require that \( f _B \) is finite for all balls \( B \subset \Omega \)). We define the class of functions of \textnormal{bounded mean oscillation} as
$$
\operatorname{BMO}(W ; X):=\left\{f \:|\:\|f\|_{\mathrm{BMO}(W ; X)}<\infty\right\} .
$$
\end{definition}

\begin{proposition} \label{73}(see \cite[Theorem 3.20]{bjorn2011nonlinear}.) Suppose $\mu$ is doubling. Let $B \subset X$ be a ball and $f \in \operatorname{BMO}(5 B ; X)$. Let further $A:=\log (2) / 4 C_\mu ^{15}$. Then for every $0<\epsilon<A$ there is a constant $c$, only depending on $\epsilon$ and $C_\mu$, such that
$$
\fint _B e^{\epsilon |f-f_B| /\|f\|_{\mathrm{BMO}(5 B ; X)}} \, \textnormal{d} \mu \leq c .
$$

\end{proposition}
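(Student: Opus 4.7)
The plan is to follow the standard John--Nirenberg strategy, adapted to the doubling metric measure space setting: prove exponential decay of the level sets of $f - f_B$ by a Calder\'on--Zygmund/Vitali--type iteration, and then upgrade it to exponential integrability by a layer cake formula. After the rescaling $f \mapsto f/\|f\|_{\mathrm{BMO}(5B;X)}$, I may assume $\|f\|_{\mathrm{BMO}(5B;X)} = 1$, so that it suffices to produce constants $C_1 = C_1(C_\mu)$ and $C_2 = 4 C_\mu^{15}/\log 2$ for which
$$
\mu\bigl(\{x \in B : |f(x) - f_B| > t\}\bigr) \leq C_1 \, 2^{-t/C_2}\, \mu(B), \qquad t > 0.
$$
Once this is in hand, the layer cake identity
$$
\fint_B e^{\epsilon|f-f_B|}\, d\mu = 1 + \int_0^\infty \epsilon e^{\epsilon t}\, \frac{\mu(\{x \in B : |f-f_B|>t\})}{\mu(B)}\, dt
$$
converges whenever $\epsilon < 1/C_2 = A$, with the resulting bound depending only on $\epsilon$ and $C_\mu$, which is exactly the asserted inequality.

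The exponential tail estimate comes from a self-improvement for
$$
\Phi(t) := \sup_{B' \subset 5B}\, \frac{\mu\bigl(\{y \in B' : |f(y) - f_{B'}|>t\}\bigr)}{\mu(B')}.
$$
I would prove that there is an additive constant $a = a(C_\mu)$ such that $\Phi(t + a) \leq \tfrac{1}{2}\Phi(t)$, so that iterating $k$ times gives $\Phi(ka) \leq 2^{-k}$. To obtain this, fix $B' \subset 5B$, set $E_t = \{y \in B' : |f - f_{B'}| > t\}$, and for each Lebesgue point $x \in E_t$ run a stopping--time selection that picks a maximal ball $B_x \subset B'$ on which the average of $|f - f_{B'}|$ still exceeds a threshold of order $t$. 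A Vitali $5$--covering argument, available in doubling metric measure spaces, extracts a disjoint subfamily $\{B_i\}$ with $E_t \subset \bigcup_i 5B_i$. The maximality of each $B_i$ combined with the standard chain estimate that compares $f_{B_i}$, $f_{5B_i}$ and $f_{B'}$ (using $\|f\|_{\mathrm{BMO}(5B;X)} = 1$ on each intermediate ball, noting all these balls lie inside $5B$) controls $|f_{B_i} - f_{B'}|$ by $t$ plus a constant involving $\log_2 C_\mu$. Inside each $5B_i$ we then estimate the portion of $E_{t+a}$ by $\Phi(t)\,\mu(5B_i)$, and summing with the doubling passage $\mu(5B_i) \leq C_\mu^{3}\mu(B_i)$ (since $5 \leq 2^3$) yields the factor $\tfrac{1}{2}$ provided $a$ is chosen of size $4 C_\mu^{15}$ times a universal constant, this being precisely the bookkeeping that fixes the exponent $15$ in the statement.

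The main obstacle is the careful Vitali selection and the bookkeeping of how many doublings are needed to move between the scales $B_i$, $5B_i$, $B'$ and the ambient $5B$: each such passage costs a power of $C_\mu$, and the total accumulation is what fixes the constant $A = \log 2/(4 C_\mu^{15})$. Secondary technical care is needed to handle non--Lebesgue points (a null set, harmless under doubling) and to verify that the selected $B_i$ actually lie in the domain where the BMO assumption is in force; both are absorbed in the convention $B' \subset 5B$. Once the self-improvement $\Phi(t+a) \leq \tfrac{1}{2}\Phi(t)$ is established, its iterated form feeds into the layer cake display above and, for any $0 < \epsilon < A$ strictly, produces the explicit bound $c = c(\epsilon, C_\mu)$ on the right-hand side.
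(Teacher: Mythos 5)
Your proposal is correct in outline and follows the standard John--Nirenberg argument (normalization, Calder\'on--Zygmund/Vitali stopping-time decomposition yielding the exponential decay $\Phi(t+a)\leq \tfrac12\Phi(t)$, then the layer-cake upgrade), which is exactly the route taken in the cited source \cite[Theorem 3.20]{bjorn2011nonlinear}; the paper itself offers no proof beyond that citation. The only caveat is that recovering the precise constant $A=\log(2)/4C_\mu^{15}$ requires carrying out the doubling bookkeeping you defer, but the essential content --- exponential integrability with constants depending only on $\epsilon$ and $C_\mu$ --- is fully delivered by your scheme.
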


\subsection{Integrals in Terms of Distribution Functions}

Let $\{X, \mathcal{A}, \mu\}$ be a measure space and $E \in \mathcal{A}$. 

Let $f: E \rightarrow [-\infty , \infty] $ be measurable and nonnegative. The distribution function of $f$ relative to $E$ is defined as
$$
t \in \mathbb{R}^{+} \mapsto \mu(\{f>t\}) .
$$

This is a nonincreasing function of $t$ and if $f$ is finite a.e. in $E$, then
$$
\lim _{t \rightarrow \infty} \mu([f>t])=0 \quad \text { unless } \quad \mu([f>t]) \equiv \infty .
$$

If $f$ is integrable, such a limit can be given a quantitative form. Indeed
$$
t \mu([f>t])=\int_E t \chi_{[f>t]} \, \textnormal d \mu \leq \int_E f \, \textnormal d \mu<\infty .
$$
\begin{proposition} \label{7} (see \cite[Proposition 15.1]{dibenedetto2016real}.) Let $\{X, \mathcal{A}, \mu\}$ be complete and $\sigma$-finite and let $f: E \rightarrow [-\infty , \infty] $ be measurable and nonnegative. Let also $\mathtt{v}$ be a complete and $\sigma$-finite measure on $\mathbb{R}^{+}$ such that $\mathtt{v}([0, t))=\mathtt{v} ([0, t])$ for all $t>0$. Then
$$
\int_E \mathtt{v} ( \{ 0, f\} ) \,  \textnormal d \mu=\int_0^{\infty} \mu(\{ f>t \}) \, \textnormal d \mathtt{v} .
$$

In particular if $\mathtt{v} ([0, t])=t^q$ for some $q>0$, then
$$
\int_E f^q \, \textnormal d \mu=q \int_0^{\infty} t^{q-1} \mu(\{ f>t \} ) \, \textnormal d t , 
$$
where $\textnormal d t$ is the Lebesgue measure on $\mathbb{R}^{+}$.
\end{proposition}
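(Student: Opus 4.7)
The plan is to derive Proposition \ref{7} from the Tonelli form of Fubini's theorem, applied to the ``subgraph'' of $f$ in the product space $(E\times \mathbb{R}^+, \mathcal{A}\otimes \mathcal{B}(\mathbb{R}^+), \mu\otimes \mathtt{v})$. Define
$$
\Gamma := \{(x,t)\in E\times \mathbb{R}^+ : 0\leq t < f(x)\},
$$
and integrate its characteristic function $\chi_\Gamma$ against the product measure in both iterated orders. The identity will drop out by reading the same integral in two different ways.

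First I would verify that $\Gamma$ is measurable in $\mathcal{A}\otimes \mathcal{B}(\mathbb{R}^+)$. Because $f$ is $\mathcal{A}$-measurable on $E$, the function $(x,t)\mapsto f(x)$ is measurable on the product space; similarly $(x,t)\mapsto t$ is measurable. Hence $\Gamma = \{f(x)>t\}\cap (E\times \mathbb{R}^+)$ belongs to the product $\sigma$-algebra. The completeness and $\sigma$-finiteness of both $\mu$ and $\mathtt{v}$ license a direct application of Tonelli to the nonnegative function $\chi_\Gamma$. If $\mu(\{f=\infty\})>0$, a short auxiliary argument shows that both sides of the claimed identity are $+\infty$; otherwise one may assume $f<\infty$ $\mu$-a.e.\ on $E$ and proceed.

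Next I would compute the two iterated integrals by slicing. For fixed $x\in E$, the $x$-section is $\Gamma_x=[0,f(x))$, so $\mathtt{v}(\Gamma_x)=\mathtt{v}([0,f(x)))$. The hypothesis $\mathtt{v}([0,t))=\mathtt{v}([0,t])$ for all $t>0$ is exactly the absence of $\mathtt{v}$-atoms at positive points, which lets me replace the half-open interval by the closed one, giving $\mathtt{v}(\Gamma_x)=\mathtt{v}([0,f(x)])=\mathtt{v}(\{0,f\})(x)$ and the left-hand side
$$
\int_{E\times \mathbb{R}^+}\chi_\Gamma\, d(\mu\otimes \mathtt{v}) = \int_E \mathtt{v}(\{0,f\})\, d\mu.
$$
For fixed $t\in \mathbb{R}^+$, the $t$-section is $\Gamma^t=\{x\in E:f(x)>t\}$, whose $\mu$-measure is $\mu(\{f>t\})$, producing the alternative expression $\int_0^\infty \mu(\{f>t\})\, d\mathtt{v}$. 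Tonelli equates the two, proving the main identity.

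For the specialization, the prescription $\mathtt{v}([0,t])=t^q$ determines $\mathtt{v}$ uniquely as the Lebesgue--Stieltjes measure with distribution function $t\mapsto t^q$; on $(0,\infty)$ this has Radon--Nikodym density $qt^{q-1}$ with respect to Lebesgue measure. Substituting gives $\mathtt{v}(\{0,f\})(x)=f(x)^q$ and $d\mathtt{v}=qt^{q-1}\, dt$, so the general identity collapses to $\int_E f^q\, d\mu = q\int_0^\infty t^{q-1}\mu(\{f>t\})\, dt$, which is the layer-cake formula. The main technical point, and essentially the only place where the proof requires care, is the verification of joint measurability of $\Gamma$ together with the observation that the no-atom hypothesis closes the half-open/closed interval gap; after that the proof is bookkeeping with Tonelli.
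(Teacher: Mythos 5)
The paper does not actually prove this proposition --- it is quoted from DiBenedetto's text (Proposition 15.1) with only a citation --- and your Tonelli computation on the subgraph $\Gamma=\{(x,t): 0\le t<f(x)\}$ is the standard argument (essentially the one in the cited source); it is correct, including the measurability of $\Gamma$ and the passage from $\mathtt{v}([0,f(x)))$ to $\mathtt{v}([0,f(x)])$ via the no-atom hypothesis. The only point worth flagging is that the hypothesis $\mathtt{v}([0,t))=\mathtt{v}([0,t])$ covers only $t>0$, so if $\mathtt{v}(\{0\})>0$ and $f$ vanishes on a set of positive $\mu$-measure the two sides of the general identity can differ; this is a blemish of the statement as quoted rather than of your proof, and it is vacuous in the specialization $\mathtt{v}([0,t])=t^{q}$, where $\mathtt{v}(\{0\})=0$.
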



\subsection{Auxiliary Inequalities and Estimates}

From \eqref{33}, there exist constants $\conspoin _2 >0$  and $\cpoin _2>1$ such that we have
\begin{equation}\label{34}
\frac{\wghtvi  (B(\pvr _1 , r))}{\wghtvi  (B(\pvr _0 , R))} \geq \conspoin _2\left( \frac{r}{R}\right)^{\cpoin _2},
\end{equation}
whenever $0<r<R$ and $\pvr _1 \in B(\pvr _0 , R)$, see \cite[pp. 103-104]{heinonen2001lecturesmetric}

\begin{lemma}
Assume that \eqref{26} is satisfied. Let $v\in W^{1,q} _0 (B(\pvr , r) ; \wghtvi)\cap L^{\infty}(B(\pvr , r)) $ with $q\geq 1$, then
\begin{equation}  \label{27}
\left(  \fint _{B(\pvr , r)} |v|^{q 1_{\wghtz}} \, \dwvi  \right)^{\frac{1}{q1_{\wghtz}}} \leq q \conspoin  _{\wghtz}  r\left(  \fint _{B(\pvr , r)} |\grad  v|_{\fnf}^{q} \, \dwvi  \right)^{\frac{1}{q}}.
\end{equation}
\end{lemma}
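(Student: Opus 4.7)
The plan is to deduce \eqref{27} from \eqref{26} by applying the Sobolev--Poincaré-type inequality to the truncated power $|v|^q$ (rather than to $v$ itself), then rebalancing the resulting terms via Hölder's inequality. First I would set $w := |v|^{q}$. Since $v \in L^{\infty}(B(\pvr,r))$, the function $w$ is bounded, and since $v \in W^{1,q}_{0}(B(\pvr,r);\wghtvi)$, the chain rule gives $w \in W^{1,1}_{0}(B(\pvr,r);\wghtvi)$ with
$$
|\grad w|_{\fnf} = q\,|v|^{q-1}\,|\grad v|_{\fnf} \quad \text{a.e.\ in } B(\pvr,r).
$$
By density of $C_0^{\infty}(B(\pvr,r))$ in $W^{1,1}_{0}(B(\pvr,r);\wghtvi)$, I would extend \eqref{26} to $w$, obtaining
$$
\left(\fint_{B(\pvr,r)} |v|^{q 1_{\wghtz}} \, \dwvi\right)^{\!\!1/1_{\wghtz}} \leq \conspoin_{\wghtz}\, r \fint_{B(\pvr,r)} |\grad w|_{\fnf}\,\dwvi  \leq q\,\conspoin_{\wghtz}\, r \fint_{B(\pvr,r)} |v|^{q-1}\,|\grad v|_{\fnf}\,\dwvi.
$$

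Next I would apply Hölder's inequality with conjugate exponents $q/(q-1)$ and $q$ on the right (the case $q=1$ is immediate since $|v|^{q-1} \equiv 1$), giving
$$
\fint_{B(\pvr,r)} |v|^{q-1}\,|\grad v|_{\fnf}\,\dwvi \leq \left(\fint_{B(\pvr,r)} |v|^{q}\,\dwvi\right)^{\!\!(q-1)/q} \left(\fint_{B(\pvr,r)} |\grad v|_{\fnf}^{q}\,\dwvi\right)^{\!\!1/q}.
$$
Then, by Lyapunov's inequality (a straightforward consequence of Jensen applied to the convex function $t \mapsto t^{1_{\wghtz}}$), I would bound
$$
\left(\fint_{B(\pvr,r)} |v|^{q}\,\dwvi\right)^{\!\!(q-1)/q} \leq \left(\fint_{B(\pvr,r)} |v|^{q 1_{\wghtz}}\,\dwvi\right)^{\!\!(q-1)/(q\, 1_{\wghtz})}.
$$

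Combining the three displays yields
$$
\left(\fint_{B(\pvr,r)} |v|^{q 1_{\wghtz}}\dwvi\right)^{\!\!1/1_{\wghtz}} \leq q\,\conspoin_{\wghtz}\, r \left(\fint_{B(\pvr,r)} |v|^{q 1_{\wghtz}}\dwvi\right)^{\!\!(q-1)/(q\,1_{\wghtz})}\!\left(\fint_{B(\pvr,r)}|\grad v|_{\fnf}^{q}\dwvi\right)^{\!\!1/q}.
$$
Since $v\in L^{\infty}(B(\pvr,r))$, the factor $\bigl(\fint |v|^{q 1_{\wghtz}}\bigr)^{(q-1)/(q 1_{\wghtz})}$ is finite and may be divided; as $\tfrac{1}{1_{\wghtz}} - \tfrac{q-1}{q\,1_{\wghtz}} = \tfrac{1}{q\,1_{\wghtz}}$, the desired inequality \eqref{27} follows.

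The only delicate point is the first step, namely justifying the use of \eqref{26} with the nonsmooth test function $w=|v|^{q}$. I would handle this by choosing an approximating sequence $v_{k}\in C_{0}^{\infty}(B(\pvr,r))$ with $v_{k}\to v$ in $W^{1,q}(B(\pvr,r);\wghtvi)$ (cut off at level $\|v\|_{L^{\infty}}$ to preserve the $L^{\infty}$ bound), applying \eqref{26} to $|v_{k}|^{q}$ (which can be obtained as the limit of a further mollification, or by noting that \eqref{26} extends to $W^{1,1}_{0}$ by density), and passing to the limit using Fatou and dominated convergence. The rest of the argument is a mechanical chaining of Hölder inequalities.
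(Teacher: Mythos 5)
Your proposal is correct and follows essentially the same route as the paper: apply \eqref{26} to $|v|^{q}$, use the chain rule and Hölder's inequality with exponents $q$ and $q/(q-1)$, upgrade the $L^{q}$ average of $|v|$ to the $L^{q 1_{\wghtz}}$ average by Jensen, and absorb that factor into the left-hand side. The only difference is that you spell out the density/approximation argument justifying the use of \eqref{26} on the nonsmooth function $|v|^{q}$, which the paper leaves implicit.
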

\begin{proof}
From \eqref{26} and  Hölder's inequality, 
\begin{align*}
&\left(  \fint _{B(\pvr , r)} |v|^{q1_{\wghtz}} \, \dwvi  \right)^{\frac{1}{1_{\wghtz }}} \leq \conspoin  _{\wghtz}  r\fint _{B(\pvr , r)} |\grad  (|v|^q)| _{\fnf} \, \dwvi \\ 
& \quad  \ \leq q \conspoin  _{\wghtz}  r \left(\fint _{B(\pvr , r)} |\grad  v|_{\fnf}^q \, \dwvi \right)^{\frac{1}{q}} \left(\fint _{B(\pvr , r)} |v|^q \, \dwvi  \right)^{\frac{q-1}{q}}\\
& \quad  \ \leq q \conspoin  _{\wghtz}  r \left(\fint _{B(\pvr , r)} |\grad  v|_{\fnf} ^q \, \dwvi \right)^{\frac{1}{q}} \left(\fint _{B(\pvr , r)} |v|^{q 1_{\wghtz}}  \, \dwvi  \right)^{\frac{q-1}{q1_{\wghtz}}} .
\end{align*}
Using this inequality we obtain \eqref{27}.   \end{proof}

The next two lemmas are based on \cite{harjulehto2006unbounded}.
\begin{lemma} \label{128} Assume that \eqref{78} and \eqref{74} holds, we have
\begin{enumerate}[label=(\roman*)]
\item \label{76}   If $0<r\leq \rast := [1/(\conspoin + 1)]^{1/\cpoin _1} <1$, then
\begin{equation} \label{66}
p^+ _{r} - p^- _{r} \leq \conspoin _3:= \frac {\log \conspoin _0 }{ \log \frac{\conspoin +1}{\conspoin}},
\end{equation}
where $p^- _{r} = \inf _{B(\pvr ,r) } p$ and $p^+ _{r} = \sup _{B(\pvr ,r)} p$.

\item \label{77} If $0<r\leq \rast$, then
\begin{equation} \label{38}
r^{-p(\pvr )} \leq \conspoin _4 ( \conspoin , \conspoin _0 , \cpoin _1)r^{-p_{W  }^{-}} ,
\end{equation}
where $\pvr \in W \subset B(\pvr ,r)$ and $p^- _{W} = \inf _{W } p$.

\end{enumerate}
\end{lemma}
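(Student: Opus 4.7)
The two conditions to combine are the volume bound \eqref{78} and the log-Hölder-type estimate \eqref{74}; the role of $\rast$ is precisely to force the ball measure $\wghtvi(B(\pvr,r))$ to sit strictly below $1$ with a quantitative gap, so that taking logarithms in \eqref{74} does not degenerate. The plan is: first prove (i) directly by this observation, and then reduce (ii) to (i) plus one more application of \eqref{78}.

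\textbf{Step 1 (part (i)).} Rewriting \eqref{74} as
\[
(p^+_r - p^-_r)\,\log\!\tfrac{1}{\wghtvi(B(\pvr,r))} \;\leq\; \log \conspoin_0,
\]
the goal is to bound $\log\frac{1}{\wghtvi(B(\pvr,r))}$ from below by a positive constant. From \eqref{78} with $r\leq \rast = [1/(\conspoin+1)]^{1/\cpoin_1}$ one has
\[
\wghtvi(B(\pvr,r))\leq \conspoin\, r^{\cpoin_1}\leq \frac{\conspoin}{\conspoin+1}<1,
\]
so $\log\frac{1}{\wghtvi(B(\pvr,r))}\geq \log\frac{\conspoin+1}{\conspoin}>0$. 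Dividing yields (i) with the stated $\conspoin_3$.

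\textbf{Step 2 (part (ii)).} The claim $r^{-p(\pvr)}\leq \conspoin_4\, r^{-p^-_W}$ is equivalent to
\[
(p(\pvr)-p^-_W)\,\log(1/r)\;\leq\;\log \conspoin_4,
\]
since $r<1$. Because $\pvr\in W\subset B(\pvr,r)$, monotonicity of $\inf$/$\sup$ gives $0\leq p(\pvr)-p^-_W\leq p^+_r-p^-_r$. I would control the factor $\log(1/r)$ by inverting \eqref{78}:
\[
\cpoin_1\log(1/r)\;\leq\;\log\!\tfrac{\conspoin}{\wghtvi(B(\pvr,r))}\;=\;\log \conspoin+\log\!\tfrac{1}{\wghtvi(B(\pvr,r))}.
\]
Multiplying by $p(\pvr)-p^-_W\leq p^+_r-p^-_r$ and splitting the two terms, the first is handled by part (i) (bounding $p^+_r-p^-_r\leq \conspoin_3$) and the second by \eqref{74} directly ($(p^+_r-p^-_r)\log\frac{1}{\wghtvi(B(\pvr,r))}\leq \log \conspoin_0$). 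Combining,
\[
(p(\pvr)-p^-_W)\log(1/r)\;\leq\;\frac{\conspoin_3\,\max\{\log \conspoin,0\}+\log \conspoin_0}{\cpoin_1},
\]
and exponentiating produces $\conspoin_4$ depending only on $\conspoin,\conspoin_0,\cpoin_1$.

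\textbf{Main obstacle.} The only genuinely delicate point is that the naive bound $r^{-(p(\pvr)-p^-_W)}\leq r^{-\conspoin_3}$ (from part (i) alone) blows up as $r\to 0$; the constant $\conspoin_4$ cannot be obtained by just applying (i). The resolution is the cancellation described in Step 2: although $\log(1/r)$ is large when $r$ is small, \eqref{78} forces $\wghtvi(B(\pvr,r))$ to be correspondingly small, and \eqref{74} then forces $p^+_r-p^-_r$ to be correspondingly small, with precisely matching rates against $\log(1/r)$. Verifying that these rates cancel with the constants claimed (so that $\conspoin_4$ is independent of $r$) is where the argument must be carried out carefully; the minor technicality of whether $\log\conspoin$ is positive or negative is handled by a $\max$.
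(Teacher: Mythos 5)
Your proposal is correct and follows essentially the same route as the paper: part (i) is exactly the intended consequence of combining \eqref{78} (which forces $\wghtvi(B(\pvr,r))\leq \conspoin/(\conspoin+1)<1$ for $r\leq \rast$) with \eqref{74}, and your Step 2 is the logarithmic form of the paper's multiplicative chain $r^{-p(\pvr)}\leq r^{-(p^+_r-p^-_r)}r^{-p^-_W}\leq (\conspoin^{p^+_r-p^-_r}\conspoin_0)^{1/\cpoin_1}r^{-p^-_W}$ followed by \eqref{66}. The cancellation you flag as the "main obstacle" is precisely the mechanism the paper uses, so there is no gap.
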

\begin{proof}
\ref{76}  This is a consequence of \eqref{78} and \eqref{74}. \\

\ref{77}  We have $0\leq r \leq \rast < 1$ and $\pvr\in W\subset B(\pvr , r)$. From \eqref{74},
$$
r^{-p(\pvr)} \leq r^{-p^+_W} \leq r^{-p^+_r} \leq  r^{-p^+_r + p^-_r - p^-_W}\leq  \left(\conspoin ^{p^+_r - p^-_r} \conspoin _0 \right)^{\frac{1}{\cpoin _1}} r^{- p^-_W}.
$$
By employing \eqref{66}, we obtain \eqref{38}.
\end{proof}

\begin{lemma} Assume that  \eqref{74} holds. Let $f\in L^s (B(\pvr , r) ; \wghtvi )$, with $0<r\leq \rast$ and $s>p_{r  }^{+}-p_{r  }^{-}$. Then,
\begin{equation} \label{75}
\fint _{B(\pvr , r   )} |f|^{p_{r  }^{+}-p_{r  }^{-}} \, \dwvi  \leq \conspoin _0^{\frac{1}{s}}\|f\|_{s,B(\pvr , r   ) , \wghtvi }^{p_{r  }^{+}-p_{r  }^{-}}
\end{equation}
and
\begin{equation} \label{80}
\fint _{B(\pvr , r   )} |f|^{p-p_{r  }^{-}} \, \dwvi  \leq \conspoin _0^{\frac{1}{s}}\|f\|_{s,B(\pvr , r   ) , \wghtvi }^{p_{r  }^{+}-p_{r  }^{-}} +1.
\end{equation}
\end{lemma}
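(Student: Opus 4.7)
The plan is to establish \eqref{75} by a direct application of Hölder's inequality, where the exponent is chosen so that the residual factor is exactly the volume power controlled by the hypothesis \eqref{74}. Then \eqref{80} will follow from \eqref{75} by splitting the domain according to whether $|f|\leq 1$ or $|f|>1$.

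For \eqref{75}, since $s>p^+_r-p^-_r$, I would apply Hölder's inequality with conjugate exponents $q=s/(p^+_r-p^-_r)$ and $q'=s/(s-(p^+_r-p^-_r))$ to the product $|f|^{p^+_r-p^-_r}\cdot 1$ over $B(\pvr,r)$. This yields
\begin{equation*}
\int_{B(\pvr,r)} |f|^{p^+_r-p^-_r}\,\dwvi \leq \|f\|_{s,B(\pvr,r),\wghtvi}^{p^+_r-p^-_r}\,\wghtvi(B(\pvr,r))^{(s-(p^+_r-p^-_r))/s}.
\end{equation*}
Dividing both sides by $\wghtvi(B(\pvr,r))$ produces the factor $\wghtvi(B(\pvr,r))^{-(p^+_r-p^-_r)/s}=\bigl[\wghtvi(B(\pvr,r))^{p^-_r-p^+_r}\bigr]^{1/s}$, which by \eqref{74} is at most $\conspoin_0^{1/s}$. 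This gives \eqref{75}.

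For \eqref{80}, I would write the integrand as $|f|^{p-p^-_r}=|f|^{p-p^-_r}\chi_{\{|f|\leq 1\}}+|f|^{p-p^-_r}\chi_{\{|f|>1\}}$. On the set $\{|f|\leq 1\}$ the exponent $p-p^-_r\geq 0$ forces $|f|^{p-p^-_r}\leq 1$, so the averaged contribution is at most $1$. On $\{|f|>1\}$, using $p-p^-_r\leq p^+_r-p^-_r$ gives $|f|^{p-p^-_r}\leq |f|^{p^+_r-p^-_r}$, and \eqref{75} bounds the corresponding average by $\conspoin_0^{1/s}\|f\|_{s,B(\pvr,r),\wghtvi}^{p^+_r-p^-_r}$. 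Summing the two contributions yields \eqref{80}.

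There is no genuine obstacle here: the argument is essentially a two-line Hölder estimate combined with the logarithmic decay hypothesis \eqref{74} that is precisely designed to convert volume powers with oscillating exponent into a universal constant. The only mild point of care is verifying that the exponent $(s-(p^+_r-p^-_r))/s$ is nonnegative (guaranteed by the hypothesis $s>p^+_r-p^-_r$) so that raising $\wghtvi(B(\pvr,r))$ to a negative power in the subsequent step stays within the regime covered by \eqref{74}.
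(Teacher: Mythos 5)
Your proposal is correct and follows essentially the same route as the paper: Hölder's inequality with exponent $s/(p^+_r-p^-_r)$ to get \eqref{75}, with the residual volume power $\wghtvi(B(\pvr,r))^{-(p^+_r-p^-_r)/s}$ absorbed via \eqref{74}. Your domain splitting for \eqref{80} is just the case-by-case proof of the pointwise bound $|a|^{p-p^-_r}\leq |a|^{p^+_r-p^-_r}+1$ that the paper invokes directly, so the two arguments coincide.
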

\begin{proof}
Let $q:= p^+ _{r} - p^- _{r}$. H\"{o}lder's inequality gives
\begin{align*}
&\fint _{B(\pvr  , r)} |f|^{p^+ _{r} - p^- _{r}} \, \dwvi  \leq  \left[\wghtvi  (B(\pvr  , r)) \right]^{-\frac{q}{s}} \left( \int _{B(\pvr  , r)} |f|^s \, \dwvi \right)^{\frac{q}{s}} .
\end{align*}
Using \eqref{74}, we get \eqref{75}.

Inequality \eqref{80} follows from \eqref{75} by applying  \( |a|^{p(\pvry) - p^-_r} \leq |a|^{p^+_r - p^-_r} + 1 \) for all \( a \in \mathbb{R} \) and \( \pvry \in B(\pvr, r) \).\end{proof}

Based on \cite[Lemma 3.38]{heinonen1993nonlinear}, we obtain the following result.
\begin{lemma} \label{47}
Assume that \eqref{33} is verified. Suppose  $0<q<\gamma <s $ and $ \beta \geq 0$. If a nonnegative function $v \in L^{s} (B(\pvr _0 , r  ) ; \wghtvi )$ satisfies 
\begin{equation} \label{44}
 \left(\fint _{B(\pvr _0, \lambda r^\prime)} v^s \, \dwvi \right)^{\frac{1}{s}} \leq \bar{c}  (1-\lambda)^{-\beta} \left( \fint _{B(\pvr _0 , r^{\prime} )} v^{\gamma} \, \dwvi  \right)^{\frac{1}{\gamma}}
\end{equation}
for each  $0<r^{\prime} \leq r$ and $0<\lambda<1$, then
\begin{equation}\label{46}
 \left(\fint _{B( \pvr _0 , \lambda r) } v^s \, \dwvi  \right)^{\frac{1}{s}} \leq  C (1-\lambda)^{-\frac{\beta}{\theta}} \left(\fint _{B(\pvr _0 , r)} v^q \, \dwvi  \right)^{\frac{1}{q}}.
\end{equation}
for all $0<\lambda<1$. Here, {\footnotesize $C:=2 (\conspoin _2 2^{-\cpoin _2})^{-\frac{1}{q}} \theta \left[(1-\theta)\bar c 2^{\frac{\beta}{\theta} +1}  \right] ^{\frac{1-\theta}{\theta}}$, and $\theta \in(0,1)$ is such that
$$
\frac{1}{\gamma}=\frac{\theta}{q}+\frac{1-\theta}{s}.
$$}
\end{lemma}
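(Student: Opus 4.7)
The plan is to prove Lemma~\ref{47} by a standard self-improving reverse-Hölder argument: log-convex interpolation between $L^q$ and $L^s$, a Young-type absorption, and a hole-filling iteration of Widman--Giaquinta type. Since $1/\gamma = \theta/q + (1-\theta)/s$ with $\theta\in(0,1)$, Hölder's inequality applied to $v^\gamma = v^{\gamma\theta}\cdot v^{\gamma(1-\theta)}$ with conjugate exponents $q/(\gamma\theta)$ and $s/(\gamma(1-\theta))$ yields the log-convexity estimate
$$
\left(\fint_B v^\gamma \, \dwvi\right)^{1/\gamma} \leq \left(\fint_B v^q \, \dwvi\right)^{\theta/q}\left(\fint_B v^s \, \dwvi\right)^{(1-\theta)/s}
$$
on every ball $B$. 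Abbreviating $\Psi(R) := (\fint_{B(\pvr_0,R)} v^s\,\dwvi)^{1/s}$ and $Q(R) := (\fint_{B(\pvr_0,R)} v^q\,\dwvi)^{1/q}$, and substituting this into the hypothesis \eqref{44} applied with $r'=R$ and $\lambda = \rho/R$ produces, for all $0 < \rho < R \leq r$,
$$
\Psi(\rho) \leq \bar c \left(\frac{R}{R-\rho}\right)^{\beta} Q(R)^{\theta} \Psi(R)^{1-\theta}.
$$

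Next I would apply the parametric Young inequality $XY^\theta Z^{1-\theta} \leq \epsilon' Z + \theta X^{1/\theta}\bigl[(1-\theta)/\epsilon'\bigr]^{(1-\theta)/\theta}Y$ with $\epsilon' = 1/2$, $X = \bar c(R/(R-\rho))^\beta$, $Y = Q(R)$, $Z = \Psi(R)$. Using $R \leq r$, this converts the preceding inequality into the hole-filling form
$$
\Psi(\rho) \leq \tfrac{1}{2}\Psi(R) + \theta\,\bar c^{1/\theta}\bigl[2(1-\theta)\bigr]^{(1-\theta)/\theta} r^{\beta/\theta} (R-\rho)^{-\beta/\theta}\, Q(R).
$$
To eliminate the residual dependence on $R$, I would use the reverse doubling bound \eqref{34}: whenever $R\geq r/2$, $\wghtvi(B(R)) \geq \wghtvi(B(r/2)) \geq \conspoin_2 2^{-\cpoin_2}\wghtvi(B(r))$, hence $Q(R)\leq (\conspoin_2 2^{-\cpoin_2})^{-1/q} Q(r)$ --- precisely the doubling factor appearing in the stated constant.

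The final step is the iteration, whose main obstacle is producing a constant $C$ independent of $\lambda\in(0,1)$: a naive iteration on $[\lambda r, r]$ would require bounds on $Q$ at radii as small as $\lambda r$, contributing an unwanted $\lambda^{-\cpoin_2/q}$ factor. I would instead iterate along the geometric sequence $\rho_k := (1-(1-\lambda)2^{-k})r$, which satisfies $\rho_0 = \lambda r$, $\rho_k\nearrow r$, and crucially $\rho_{k+1}\geq (1+\lambda)r/2\geq r/2$ for every $k\geq 0$. Consequently the doubling bound on $Q(\rho_{k+1})$ holds throughout the iteration with the $\lambda$-independent constant $(\conspoin_2 2^{-\cpoin_2})^{-1/q}$, while $(R-\rho_k)^{-\beta/\theta}$ at $R=\rho_{k+1}$ contributes the factor $(2^{k+1}/(1-\lambda))^{\beta/\theta}$. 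Closing the recursion either through the Widman--Giaquinta iteration lemma (with the iteration ratio chosen so that the resulting geometric series converges for any $\beta/\theta>0$), or equivalently by taking logarithms of $\Psi(\rho_k) \leq \bar c (2^{k+1}/(1-\lambda))^\beta (\conspoin_2 2^{-\cpoin_2})^{-\theta/q} Q(r)^\theta \Psi(\rho_{k+1})^{1-\theta}$ and summing with weights $(1-\theta)^k$ (using $\sum(1-\theta)^k=1/\theta$ and $\sum(k+1)(1-\theta)^k=1/\theta^2$), together with uniform boundedness of $\Psi(\rho_N)$ secured by $v\in L^s(B(\pvr_0,r);\wghtvi)$ and $\rho_N\geq \lambda r>0$, the constants assemble into the asserted bound $\Psi(\lambda r)\leq C(1-\lambda)^{-\beta/\theta} Q(r)$ with $C$ of the prescribed form.
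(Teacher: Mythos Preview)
Your argument is correct and complete (in particular, the log-summation variant cleanly avoids the convergence issue that arises with the fixed choice $\epsilon'=1/2$ in the Young step when $\beta/\theta\ge 1$). However, the paper proceeds differently. Rather than iterating along a sequence of radii, it introduces the auxiliary quantity
\[
N:=\sup_{1/2<\lambda<1}(1-\lambda)^{\hat\beta}\left(\fint_{B(\pvr_0,\lambda r)}v^\gamma\,\dwvi\right)^{1/\gamma},\qquad \hat\beta=\frac{\beta(1-\theta)}{\theta},
\]
and performs a \emph{single} absorption. One application of \eqref{44} (with $\lambda'=(1+\lambda)/2$) gives $(1-\lambda)^{\beta/\theta}\Psi(\lambda r)\le \bar c\,2^{\beta/\theta}N$; then the same H\"older interpolation you use, combined with Young's inequality and the doubling bound (applied once, at a near-maximizer $\lambda_\delta\ge 1/2$), yields $N\le C\,Q(r)+\tfrac12 N$, which absorbs in one step. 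What this buys: the paper never faces a geometric series whose ratio depends on $\beta/\theta$, so no case analysis or log-summation is needed, and the explicit constant $C$ falls out directly. What your approach buys: it is entirely constructive and avoids the supremum trick, at the cost of tracking an infinite recursion; the resulting constant (e.g.\ $\bar c^{1/\theta}(\conspoin_2 2^{-\cpoin_2})^{-1/q}2^{\beta/\theta^2}$ from the log-summation) has the same qualitative dependence on the parameters but does not match the paper's $C$ exactly.
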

\begin{proof} Let
$$
N:=\sup _{\frac{1}{2}<\lambda<1}(1-\lambda)^{\hat{\beta}} \left(\fint _{B(\pvr _0, \lambda r)} v^{\gamma} \, \dwvi  \right)^{\frac{1}{\gamma}},
$$
where $\hat{\beta}=\beta(1-\theta) / \theta$.

 Writing $\lambda^{\prime} := 1/2(1+\lambda)$ for each $\lambda \in(0,1)$, we have, by \eqref{44},
\begin{equation} \label{45}
(1-\lambda)^{\frac{\hat{\beta}}{1-\theta}} \left(\fint _{ B(\pvr , \lambda r )} v^s \, \dwvi  \right)^{\frac{1}{s}} \leq \bar{c} 2^{\frac{\beta}{\theta}}(1-\lambda^{\prime} )^{\hat{\beta}} \left( \fint _{ B (\pvr _0 , \lambda^{\prime} r )} v^{\gamma} \, \dwvi  \right)^{\frac{1}{\gamma}} \leq \bar{c} 2^{\frac{\beta}{\theta}} N.
\end{equation}

Fix $\delta>0$ and choose $\lambda _{\delta} \in [1/2 , 1 )$ such that
$$
N \leq (1-\lambda _{\delta})^{\hat \beta} \left(\fint _{ B(\pvr _0 , \lambda _{\delta} r )} v^\gamma \, \dwvi  \right)^{\frac{1}{\gamma}}+\delta .
$$

Next, we apply Young's inequality:
$$
|a b| \leq  \epsilon^{-\frac{1-\theta}{\theta}} \theta |a|^{\frac{1}{\theta}}+\epsilon (1-\theta) |b|^{\frac{1}{1-\theta}},
$$
where $\epsilon>0$.

Since
$$
1=\theta \frac{\gamma}{q}+(1-\theta) \frac{\gamma}{s},
$$
we have
\begin{equation*}
\begin{aligned}
& N \leq (1-\lambda_0 )^{\hat{\beta}} \left(\fint _{B(\pvr _0 , \lambda _{\delta} r)} v^\gamma \, \dwvi  \right)^{\frac{1}{\gamma}}+\delta \\
& \quad \ = (1-\lambda _{\delta} )^{\hat{\beta}} \left(\fint _{ B( \pvr _0 , \lambda _{\delta} r)} v^{\gamma \theta} v^{\gamma (1-\theta)} \, \dwvi  \right)^{\frac{1}{\gamma}}+\delta \\
& \quad \ \leq (1-\lambda _{\delta} )^{\hat{\beta}} \left(\fint _{B(\pvr _0 , \lambda _{\delta} r)} v^q \, \dwvi  \right)^{\frac{\theta}{q}} \left(\fint _{B( \pvr _0 , \lambda _{\delta} r)} v^s \, \dwvi  \right)^{\frac{1-\theta}{s}}+\delta \\
& \quad \ \leq \theta \epsilon ^{-\frac{1-\theta}{\theta}} \left( \fint _{B(\pvr _0 , \lambda _{\delta} r)} v^q \, \dwvi   \right)^{\frac{1}{q}}+  \epsilon (1-\theta) (1-\lambda _{\delta})^{\frac{\hat{\beta} }{1-\theta}} \left(\fint _{ B(\pvr _0 , \lambda _{\delta} r )} v^s \, \dwvi  \right)^{\frac{1}{s}}+\delta \\
& \quad \ \leq (\conspoin _2 \lambda _{\delta}^{\cpoin _2})^{-\frac{1}{q}} \theta \epsilon ^{-\frac{1-\theta}{\theta}} \left(\fint _{B(\pvr _0 , r)} v^q \, \dwvi  \right)^{\frac{1}{q}}+\epsilon   (1-\theta)\bar c 2^{\frac{\beta}{\theta}} N +\delta,
\end{aligned}
\end{equation*}
where the last two inequalities follow from the  property \eqref{34} of $\wghtvi $ and \eqref{45}. 

Choosing $\epsilon= [(1-\theta)\bar c 2^{\frac{\beta}{\theta} +1}  ]^{-1}$ and letting $\delta\rightarrow 0$, we get
$$
N \leq 2 (\conspoin _2 2^{-\cpoin _2})^{-\frac{1}{q}} \theta \left[(1-\theta)\bar c 2^{\frac{\beta}{\theta} +1}  \right] ^{\frac{1-\theta}{\theta}} \left(\fint _{B(\pvr _0 , r)} v^q \, \dwvi  \right)^{\frac{1}{q}}.
$$

Hence, from \eqref{45}, we have
$$
(1-\lambda)^{\frac{\beta}{\theta}} \left(\fint _{B( \pvr _0 , \lambda r) } v^s \, \dwvi  \right)^{\frac{1}{s}} \leq  2 (\conspoin _2 2^{-\cpoin _2})^{-\frac{1}{q}} \theta \left[(1-\theta)\bar c 2^{\frac{\beta}{\theta} +1}  \right] ^{\frac{1-\theta}{\theta}} \left(\fint _{B(\pvr _0 , r)} v^q \, \dwvi  \right)^{\frac{1}{q}}.
$$

This proves \eqref{46}.\end{proof}

As a consequence of Lemma \ref{47}, we have the
\begin{corollary} \label{127}
Assume that \eqref{33} is verified. Suppose  $0<q<\gamma <s $ and $ \beta \geq 0$. If a nonnegative function $v \in L^{\infty} (B(\pvr _0 , r  )  )$ satisfies 
\begin{equation*} 
 \| v\|_{L^\infty(B(\pvr _0, \lambda r^\prime)) } \leq \bar{c}  (1-\lambda)^{-\beta} \left( \fint _{B(\pvr _0 , r^{\prime} )} v^{\gamma} \, \dwvi  \right)^{\frac{1}{\gamma}}
\end{equation*}
for each  $0<r^{\prime} \leq r$ and $0<\lambda<1$, then
\begin{equation*}
 \| v\|_{L^\infty(B(\pvr _0, \lambda r)) } \leq  C (1-\lambda)^{-\frac{\beta}{\theta}} \left(\fint _{B(\pvr _0 , r)} v^q \, \dwvi  \right)^{\frac{1}{q}}.
\end{equation*}
for all $0<\lambda<1$. Here, {\footnotesize $C:=2 (\conspoin _2 2^{-\cpoin _2})^{-\frac{1}{q}} \theta \left[(1-\theta)\bar c 2^{\frac{\beta}{\theta} +1}  \right] ^{\frac{1-\theta}{\theta}}$, and $\theta \in(0,1)$ is such that
$$
\frac{1}{\gamma}=\frac{\theta}{q}.
$$}
\end{corollary}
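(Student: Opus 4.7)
The plan is to deduce Corollary \ref{127} directly from Lemma \ref{47} by letting $s \to \infty$. The key observation is that the constant $C$ and the exponent $\theta$ appearing in the corollary are precisely the limits of their counterparts in Lemma \ref{47}: the defining relation $\tfrac{1}{\gamma} = \tfrac{\theta}{q} + \tfrac{1-\theta}{s}$ collapses to $\tfrac{1}{\gamma} = \tfrac{\theta}{q}$ when $s \to \infty$, and the explicit formula for the constant in Lemma \ref{47} is continuous in $\theta$.

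First I would fix an arbitrary finite $s > \gamma$ and verify the hypothesis \eqref{44} of Lemma \ref{47}. Since $v \in L^\infty(B(\pvr_0, r))$ and $\wghtvi$ assigns finite mass to each ball, $v \in L^s(B(\pvr_0, r))$ automatically. Moreover, for any ball $B$ we have the trivial estimate
$$
\left(\fint_B v^s \, \dwvi\right)^{1/s} \leq \|v\|_{L^\infty(B)}.
$$
Combining this with the hypothesis of the corollary immediately gives \eqref{44} with the same constants $\bar{c}$ and $\beta$.

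Next, for each such $s$, Lemma \ref{47} yields
$$
\left(\fint_{B(\pvr_0, \lambda r)} v^s \, \dwvi\right)^{1/s} \leq C_s (1-\lambda)^{-\beta/\theta_s} \left(\fint_{B(\pvr_0, r)} v^q \, \dwvi\right)^{1/q},
$$
with $\theta_s \in (0,1)$ determined by $\tfrac{1}{\gamma} = \tfrac{\theta_s}{q} + \tfrac{1-\theta_s}{s}$ and $C_s = 2(\conspoin_2 2^{-\cpoin_2})^{-1/q} \theta_s [(1-\theta_s)\bar{c} 2^{\beta/\theta_s + 1}]^{(1-\theta_s)/\theta_s}$. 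I would then pass to the limit $s \to \infty$. On the left-hand side, the standard fact that $\bigl(\fint_B v^s \, \dwvi\bigr)^{1/s} \to \|v\|_{L^\infty(B)}$ on sets of positive finite measure delivers the $L^\infty$ norm on the ball $B(\pvr_0, \lambda r)$. On the right-hand side, $\theta_s \to q/\gamma =: \theta$, which is the exponent in the statement, and continuity of the expression defining $C_s$ with respect to $\theta_s$ gives $C_s \to C$ with the announced formula. The right-hand side $L^q$ average is independent of $s$, so the limit goes through without any further work.

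There is no substantive obstacle: the argument is essentially a limiting statement of Lemma \ref{47}. The only minor points requiring mention are the convergence $\bigl(\fint v^s \, \dwvi\bigr)^{1/s} \to \|v\|_{L^\infty}$ (which uses that both balls $B(\pvr_0, \lambda r)$ and $B(\pvr_0, r)$ have positive finite $\wghtvi$-measure, guaranteed by \eqref{78} and the local positivity inherent to \eqref{118}--\eqref{119}) and the continuity of $\theta \mapsto C(\theta)$ on $(0,1)$, both of which are entirely routine.
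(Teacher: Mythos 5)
Your proposal is correct and matches the paper's intent: the paper states Corollary \ref{127} without proof as "a consequence of Lemma \ref{47}", and the standard way to realize that is exactly your argument — note that the $L^\infty$ hypothesis implies \eqref{44} for every finite $s>\gamma$, apply the lemma, and let $s\to\infty$ so that $\theta_s\to q/\gamma$ and the $L^s$ averages converge to the essential supremum. The routine points you flag (positive finite measure of the balls and continuity of the constant in $\theta$) are the right ones and present no difficulty.
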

\subsection{The existence of solutions for obstacle problems}

Let $\obsi : \Omega \rightarrow [-\infty,+\infty]$ be a function, and let $\obsii \in \wo$. Define
$$
\kob :=\left\{v \in \wo \:|\: v \geq \obsi \text { a.e. in } \Omega, v-\obsii \in  \wob \right\} .
$$
If $\obsi =\obsii$, we write $\mathscr{K}_{\obsi  , \obsi }(\Omega)=\mathscr{K}_{\obsi} (\Omega)$.

The obstacle problem is to find a function $u\in \kob$ such that
\begin{equation} \label{4}
\int_{\Omega}  \mathscr{A}( \cdot , \grad  u) \multg  \deriv  (v-u)  \, \dwvi    \geq 0
\end{equation}
or 
\begin{equation} \label{111}
\int_{\Omega}  \left[-\mathscr{A}( \cdot , - \grad  u)\right] \multg  \deriv  (v-u)  \, \dwvi    \geq 0,
\end{equation}
for all $v \in \kob$. The function $u \in \wo$ is called a solution to the obstacle problem with obstacle $\obsi$ and boundary values $\obsii$.

Similar to \cite[Theorem 3.4]{ono1997solutionsquasi} and \cite[Theorem 3.2]{fu2015removable} (for the proof, see these works), we obtain:
\begin{proposition} \label{105} Let $\Omega _0 \subset \subset \Omega$ be a open set. Suppose that $\mathscr{K}_{\obsi  , \obsii} (\Omega _0) \neq \emptyset$ and conditions \ref{16} - \ref{17} are satisfied. Then there exists a unique solution $u \in \mathscr{K}_{\obsi  , \obsii} (\Omega _0) $ to the obstacle problem \eqref{4}.
\end{proposition}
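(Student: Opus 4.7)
The natural framework is the Browder--Stampacchia--Lions theory of monotone variational inequalities on closed convex subsets of reflexive Banach spaces. Since $\Omega_0 \subset\subset \Omega$ and $p$ is continuous, we have $1 < p^-_{\Omega_0} \leq p^+_{\Omega_0} < \infty$, so $W^{1,p(\pvr)}(\Omega_0;\wghtvi)$ and $W^{1,p(\pvr)}_0(\Omega_0;\wghtvi)$ are reflexive. I would shift coordinates via $\tilde u := u - \obsii$, so that \eqref{4} becomes: find $\tilde u$ in the nonempty, closed, convex set
\[
\tilde{\mathscr{K}} := \{\, w \in W^{1,p(\pvr)}_0(\Omega_0;\wghtvi) \:|\: w + \obsii \geq \obsi \text{ a.e. in } \Omega_0 \,\}
\]
such that $\langle T\tilde u, v - \tilde u\rangle \geq 0$ for every $v \in \tilde{\mathscr{K}}$, where
\[
\langle Tw, \varphi\rangle := \int_{\Omega_0} \mathscr{A}(\cdot, \grad(w + \obsii)) \multg \deriv \varphi \, \dwvi.
\]

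Then I would verify the four standard hypotheses of the Lions--Stampacchia existence theorem for $T$. Boundedness of $T : W^{1,p(\pvr)}_0(\Omega_0;\wghtvi) \to (W^{1,p(\pvr)}_0(\Omega_0;\wghtvi))^*$ follows from the growth bound \ref{21} together with the generalized Hölder inequality \eqref{116}. Strict monotonicity is exactly \ref{17}. For coercivity, the ellipticity \ref{16} yields $\langle Tw,w\rangle \geq \consfi \int_{\Omega_0} \fnf(\cdot,\grad(w+\obsii))^p \, \dwvi$ minus lower-order terms produced by the shift (handled by \ref{21} and Young's inequality with the variable exponent $p$); the Poincaré inequality \eqref{120} combined with the norm/modular comparison \eqref{117} converts the resulting modular bound into a bound on $\|w\|_{1,p,\Omega_0,\wghtvi}$, and gives $\langle Tw,w\rangle / \|w\|_{1,p,\Omega_0,\wghtvi} \to \infty$ as $\|w\|_{1,p,\Omega_0,\wghtvi} \to \infty$. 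Hemicontinuity of $t \mapsto \langle T(w+tz),\varphi\rangle$ on $[0,1]$ follows from Lebesgue's dominated convergence theorem, with an integrable majorant furnished by \ref{21} and the generalized Hölder inequality. The classical variational-inequality theorem then yields a solution $\tilde u \in \tilde{\mathscr{K}}$, and $u := \tilde u + \obsii \in \mathscr{K}_{\obsi,\obsii}(\Omega_0)$ solves \eqref{4}.

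Uniqueness is a short consequence of \ref{17}. If $u_1, u_2 \in \mathscr{K}_{\obsi,\obsii}(\Omega_0)$ both solve \eqref{4}, then testing the inequality for $u_1$ against $u_2$ and the inequality for $u_2$ against $u_1$ and adding gives
\[
\int_{\Omega_0} \bigl[\mathscr{A}(\cdot,\grad u_1) - \mathscr{A}(\cdot,\grad u_2)\bigr] \multg \bigl(\deriv u_1 - \deriv u_2\bigr) \, \dwvi \leq 0,
\]
which by \ref{17} forces $\deriv u_1 = \deriv u_2$ a.e., and hence $\grad u_1 = \grad u_2$ a.e. Since $u_1 - u_2 \in W^{1,p(\pvr)}_0(\Omega_0;\wghtvi)$, the Poincaré-type inequality \eqref{120} then forces $u_1 = u_2$.

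The main technical obstacle is the coercivity estimate in the variable-exponent setting. Because the modular and the Luxemburg norm on $L^{p(\pvr)}$ are related only by the two-sided inequality \eqref{117} and not by homogeneous scaling, the estimate must separate the cases $\|w\|_{1,p,\Omega_0,\wghtvi} \geq 1$ and $\|w\|_{1,p,\Omega_0,\wghtvi} \leq 1$, and the lower-order contributions from the shift by $\obsii$ must be absorbed via Young's inequality with the variable exponent $p(\pvr)$ before being dominated by the principal term. The local bounds $1 < p^-_{\Omega_0} \leq p^+_{\Omega_0} < \infty$ keep this bookkeeping routine; the argument is essentially that carried out in \cite[Theorem 3.4]{ono1997solutionsquasi} and \cite[Theorem 3.2]{fu2015removable}.
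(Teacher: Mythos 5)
Your proposal is correct and follows essentially the same route as the paper, which does not write out a proof but defers to \cite[Theorem 3.4]{ono1997solutionsquasi} and \cite[Theorem 3.2]{fu2015removable}; those arguments are exactly the Browder/Lions--Stampacchia monotone variational-inequality scheme you describe (boundedness, coercivity via \ref{16} and \eqref{120}, hemicontinuity, strict monotonicity \ref{17} for uniqueness). No gaps.
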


The proofs of the following theorems are similar to those on pp. 61-62 in \cite{heinonen1993nonlinear} and are not repeated here.
\begin{lemma} Let $u \in \kob$ be a solution of the obstacle problem \eqref{4}. If $v \in \wo$ is a supersolution in $\Omega$ such that $\min \{u, v\} \in \kob$, then $v \geq u$ a.e. in $\Omega$.
\end{lemma}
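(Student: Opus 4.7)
The strategy is the standard comparison argument for obstacle problems: test the variational inequality for $u$ against $\min\{u,v\}$ and the supersolution inequality for $v$ against $(u-v)^+$, then invoke the strict monotonicity \ref{17}. First I would use the admissible competitor $\min\{u,v\} \in \kob$ in \eqref{4}; since $\min\{u,v\} - u = -(u-v)^+$, this rearranges to
\begin{equation*}
\int_\Omega \mathscr{A}(\cdot, \grad u) \multg \deriv(u-v)^+ \, \dwvi \leq 0.
\end{equation*}
Next I would test the supersolution inequality \eqref{14} for $v$ against the nonnegative function $(u-v)^+ = u - \min\{u,v\}$, which lies in $\wob$ because both $u - \obsii$ and $\min\{u,v\} - \obsii$ do. Subject to an admissibility discussion (see below), this gives
\begin{equation*}
\int_\Omega \mathscr{A}(\cdot, \grad v) \multg \deriv(u-v)^+ \, \dwvi \geq 0.
\end{equation*}

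Subtracting the two displays and using the chain-rule identity $\deriv(u-v)^+ = \chi_{\{u>v\}}(\deriv u - \deriv v)$ a.e., I would deduce
\begin{equation*}
\int_{\{u>v\}} \bigl[\mathscr{A}(\cdot, \grad u) - \mathscr{A}(\cdot, \grad v)\bigr] \multg (\deriv u - \deriv v) \, \dwvi \leq 0.
\end{equation*}
By property \ref{17} the integrand is pointwise nonnegative almost everywhere, and is strictly positive wherever $\deriv u \neq \deriv v$. Hence $\deriv u = \deriv v$ a.e.\ on $\{u > v\}$, so $\deriv(u-v)^+ = 0$ a.e.\ in $\Omega$. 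Since $(u-v)^+ \in \wob$ is nonnegative with vanishing distributional derivative, a Poincar\'e-type argument based on \eqref{120} (applied on bounded open subsets exhausting $\Omega$ and combined with a $C_0^\infty$-approximation) forces $(u-v)^+ = 0$ a.e., yielding $v \geq u$ a.e.\ in $\Omega$.

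The principal obstacle is the admissibility of $(u-v)^+$ as a test function for the supersolution inequality, whose definition demands a $W^{1,p(\pvr)}$ test function with compact support, whereas membership in $\wob$ alone does not enforce this when $\Omega$ is unbounded. The resolution is a density-plus-truncation argument: take $\eta_k \in C_0^\infty(\Omega)$ converging to $(u-v)^+$ in the $\wob$-norm, note that $\eta_k^+$ is nonnegative, has compact support, and still converges to $(u-v)^+$ in $\wob$, substitute $\eta_k^+$ into \eqref{14}, and pass to the limit using the growth bound \ref{21} to control $\mathscr{A}(\cdot, \grad v)$ in the relevant dual space. Once the class of admissible test functions has been extended in this manner, the comparison chain above proceeds as written.
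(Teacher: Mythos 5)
Your proof is correct and is essentially the argument the paper relies on: the paper omits the proof and refers to the standard comparison lemma in Heinonen--Kilpel\"ainen--Martio (pp.~61--62), which is exactly your scheme of testing \eqref{4} with $\min\{u,v\}$, testing \eqref{14} with $(u-v)^{+}$, and invoking the strict monotonicity \ref{17} together with $(u-v)^{+}\in\wob$. Your extra care about the compact-support requirement for the supersolution test function and about the Poincar\'e step is appropriate and does not change the route.
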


\begin{lemma} If $u$ and $v$ are two supersolutions of \eqref{13}, then $\min \{u, v\}$ is also a supersolution.
\end{lemma}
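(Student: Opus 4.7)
The plan is to adapt the classical cutoff argument of Heinonen--Kilpeläinen--Martio: produce a Lipschitz partition of unity adapted to $\{u \le v\}$ versus $\{u > v\}$, apply the supersolution inequality \eqref{14} to $u$ and $v$ separately, add the two resulting inequalities, and pass to the limit. Fix a nonnegative test function $\varphi \in \wo$ with compact support in $\Omega$. For $\epsilon > 0$ let $\eta_\epsilon : \mathbb{R} \to [0,1]$ be the Lipschitz cutoff equal to $1$ for $t \le 0$, linear with slope $-1/\epsilon$ on $[0,\epsilon]$, and $0$ for $t \ge \epsilon$. Since $u - v \in \lwo$ and $\eta_\epsilon$ is Lipschitz, the variable-exponent Sobolev chain rule gives $\eta_\epsilon(u-v) \in \lwo$ with weak derivative $\eta_\epsilon'(u-v)\,\deriv(u-v)$; hence both $\varphi_1 := \eta_\epsilon(u-v)\varphi$ and $\varphi_2 := (1 - \eta_\epsilon(u-v))\varphi$ are nonnegative, compactly supported elements of $\wo$, and are therefore admissible in \eqref{14}.

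Testing the supersolution inequality for $u$ against $\varphi_1$ and for $v$ against $\varphi_2$, then summing and expanding via the Leibniz rule, produces the decomposition $I_1^\epsilon + I_2^\epsilon \ge 0$, where
$$I_1^\epsilon := \int_\Omega \bigl[\eta_\epsilon(u-v)\,\mathscr{A}(\cdot,\grad u) + (1-\eta_\epsilon(u-v))\,\mathscr{A}(\cdot,\grad v)\bigr] \multg \deriv \varphi \, \dwvi,$$
$$I_2^\epsilon := -\frac{1}{\epsilon}\int_{\{0<u-v<\epsilon\}} \varphi \bigl[\mathscr{A}(\cdot,\grad u) - \mathscr{A}(\cdot,\grad v)\bigr] \multg \deriv(u-v) \, \dwvi.$$
The monotonicity hypothesis \ref{17}, together with $\varphi \ge 0$, shows that the integrand of $I_2^\epsilon$ (without the $-1/\epsilon$ prefactor) is nonnegative, so $I_2^\epsilon \le 0$, and consequently $I_1^\epsilon \ge -I_2^\epsilon \ge 0$ for every $\epsilon > 0$.

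To conclude I would let $\epsilon \to 0^+$ in $I_1^\epsilon$. Pointwise almost everywhere, $\eta_\epsilon(u-v) \to \chi_{\{u \le v\}}$ and $1 - \eta_\epsilon(u-v) \to \chi_{\{u > v\}}$; meanwhile the growth bound \ref{21}, combined with $u, v \in \lwo$, gives $L^{p^\prime(x)}_{\loc}$ control on $|\mathscr{A}(\cdot,\grad u)|_{\fnf}$ and $|\mathscr{A}(\cdot,\grad v)|_{\fnf}$, and together with the compact support of $\varphi$ and the generalized Hölder inequality \eqref{116} this supplies a dominating function, so dominated convergence applies. Using the standard identity $\deriv u = \deriv v$ almost everywhere on $\{u = v\}$ (hence $\grad u = \grad v$ there, via the Legendre transform), the limit reads
$$\int_\Omega \mathscr{A}(\cdot,\grad \min\{u,v\}) \multg \deriv \varphi \, \dwvi \ge 0,$$
which is precisely the supersolution condition for $\min\{u,v\}$. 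The main technical obstacle I anticipate is justifying the variable-exponent Finsler chain rule that makes $\varphi_1, \varphi_2$ legitimate test functions and identifies their weak derivatives; once that is settled, the argument reduces to monotonicity combined with a routine application of dominated convergence.
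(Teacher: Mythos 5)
Your argument is correct and coincides with the proof the paper points to: the lemma is stated with the remark that the proof is as on pp.\ 61--62 of \cite{heinonen1993nonlinear}, which is precisely this cutoff construction $\eta_\epsilon(u-v)$, the splitting of the test function, the use of monotonicity \ref{17} to discard the cross term on $\{0<u-v<\epsilon\}$, and dominated convergence via \ref{21}. No substantive difference from the paper's (referenced) route.
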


\begin{lemma} \label{99} Let $u \in \kob$ be a solution of the obstacle problem \eqref{4}. Then
$$
u \leq \underset{\pvr \in \Omega}{\operatorname{ess} \sup } \max \{\obsi (\pvr ), \obsii (\pvr)\} \quad \text { a.e. in } \Omega.
$$
\end{lemma}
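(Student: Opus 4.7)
The plan is to adapt the classical maximum-principle argument for obstacle problems (see \cite{heinonen1993nonlinear}, pp.~61--62). Set $M := \operatorname{ess\,sup}_{\pvr \in \Omega} \max\{\obsi(\pvr), \obsii(\pvr)\}$. If $M = +\infty$ the conclusion is vacuous, so assume $M < \infty$, and let $w := (u - M)_+$; the goal is to show that $w \equiv 0$ a.e.

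First I would introduce the comparison function $v := \min\{u, M\} = u - w$ and verify that $v \in \kob$. The inequality $v \geq \obsi$ a.e.\ is immediate, since both $u \geq \obsi$ and $\obsi \leq M$ hold a.e. For the boundary condition $v - \obsii \in \wob$, I would write $v - \obsii = (u - \obsii) - w$ and establish that $w \in \wob$. To do this, take $\varphi_n \in C_0^\infty(\Omega)$ with $\varphi_n \to u - \obsii$ in the $\|\cdot\|_{1, p, \Omega, \wghtvi}$-norm and set $\psi_n := (\varphi_n - (M - \obsii))_+$. Since $M - \obsii \geq 0$ a.e., each $\psi_n$ vanishes outside $\operatorname{supp}\varphi_n$, and therefore has compact support in $\Omega$; moreover $\psi_n \to w$ in the Sobolev norm by the Lipschitz continuity of the positive-part operation. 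A standard mollification then places $w$ in $\wob$.

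Next, inserting $v$ into \eqref{4} and using $v - u = -w$ yields
$$
\int_\Omega \mathscr{A}(\cdot, \grad u) \multg \deriv w \, \dwvi \leq 0.
$$
By the chain rule for the positive-part operator, $\deriv w = \chi_{\{u > M\}} \deriv u$ a.e., so combining with the coercivity hypothesis \ref{16} yields
$$
\consfi \int_{\{u > M\}} |\grad u|_\fnf^{p} \, \dwvi \leq \int_\Omega \mathscr{A}(\cdot, \grad u) \multg \deriv w \, \dwvi \leq 0,
$$
forcing $\grad u = 0$ a.e.\ on $\{u > M\}$, hence $\grad w = 0$ a.e.\ in $\Omega$. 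Together with $w \in \wob$, the Poincar\'e-type inequality \eqref{120} (applied on a bounded exhaustion of $\Omega$ via the approximating sequence) then forces $w \equiv 0$ a.e.

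I expect the main obstacle to be Step~1, the membership $w \in \wob$: this is the only place where the explicit interplay of the boundary datum $\obsii$, the obstacle $\obsi$, and the essential bound $M$ enters, and it hinges on the key observation that subtracting the nonnegative quantity $M - \obsii$ from the compactly supported $\varphi_n$ automatically kills the function outside $\operatorname{supp}\varphi_n$, preserving compact support through the truncation. Once this is settled, the remaining estimates reduce to a one-line application of the coercivity assumption.
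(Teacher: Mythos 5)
Your argument is exactly the classical maximum-principle proof from Heinonen--Kilpel\"ainen--Martio that the paper itself invokes (it gives no proof of its own, only the remark that the argument follows pp.~61--62 of that book): test \eqref{4} with $v=\min\{u,M\}$ after checking $v\in\kob$, use \ref{16} to force $\grad u=0$ on $\{u>M\}$, and conclude $(u-M)_+=0$ from its membership in $\wob$. So the proposal is correct and takes essentially the same route as the paper; the only points worth polishing are that continuity of $f\mapsto f_+$ in the Sobolev norm is standard but not literally a Lipschitz estimate on gradients, and that the final Poincar\'e step via \eqref{120} is cleanest when the underlying domain is bounded, which is the situation in all of the paper's applications of this lemma.
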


Similary to \cite[Theorem 4.11]{harjulehto2006unbounded} (see also  \cite[Theorem 3.67]{heinonen1993nonlinear} for the fixed exponent case) we have the 
\begin{proposition} \label{95} Let $\obsi: \Omega \rightarrow[-\infty, \infty)$ be continuous. Then the solution $u\in \kob$ of the obstacle problem \eqref{4} is continuous. Moreover, $u$ is a solution in the open set $\{\pvr \in \Omega \:|\: u(\pvr)>\obsi(\pvr)\}$.
\end{proposition}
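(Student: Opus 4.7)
The plan is to adapt the classical argument (cf.\ \cite[Theorem 3.67]{heinonen1993nonlinear} and \cite[Theorem 4.11]{harjulehto2006unbounded}) to the variable exponent Finsler setting, exploiting the Caccioppoli inequalities implicit in \ref{16}--\ref{21} together with the Sobolev--Poincar\'e-type inequalities \eqref{26}--\eqref{64}. I would organize the proof in three parts: (i) establish that $u$ is a supersolution on all of $\Omega$ and identify $u$ with its lower semicontinuous regularization; (ii) prove continuity of $u$ at every $\pvr_0$ in the non-coincidence set $\{u>\obsi\}$ and derive the solution property there simultaneously; (iii) prove continuity at every coincidence point $\pvr_0 \in \{u=\obsi\}$, which is the main obstacle.

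For (i) and (ii): The supersolution property is immediate by testing \eqref{4} with $v=u+\varphi$ for nonnegative $\varphi \in C_0^\infty(\Omega) \subset \wob$. Combined with a standard weak Harnack/Moser iteration based on \ref{16}--\ref{21}, \eqref{26} and \eqref{64}, this allows us to identify $u$ with its lsc regularization. Given $\pvr_0$ with $u(\pvr_0)>\obsi(\pvr_0)$, the lsc of $u$ and continuity of $\obsi$ yield a ball $B=B(\pvr_0,r_0) \subset\subset \Omega$ and $c>0$ with $u - \obsi \geq c$ on $B$. Then for any $\varphi \in C_0^\infty(B)$ and $|\epsilon|$ small enough that $|\epsilon|\|\varphi\|_\infty<c$, the function $v=u-\epsilon\varphi \geq \obsi$ lies in $\kob$; plugging into \eqref{4}, dividing by $\epsilon$, and letting $\epsilon \to 0^\pm$ gives both the sub- and supersolution inequalities on $B$, so $u$ is $\mathscr{A}$-harmonic on $B$ and continuous at $\pvr_0$ by the interior H\"older regularity of $\mathscr{A}$-harmonic functions available in our setting.

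The main obstacle is (iii). For $\pvr_0 \in \{u=\obsi\}$ and $\eta>0$, the continuity of $\obsi$ yields $r>0$ such that $|\obsi(\pvry)-\obsi(\pvr_0)|<\eta$ for $\pvry \in B:=B(\pvr_0,r)\subset\subset\Omega$; the lower bound $u(\pvry) \geq \obsi(\pvry) \geq u(\pvr_0)-\eta$ on $B$ is then immediate. For the upper bound I would compare $u$ with an $\mathscr{A}$-harmonic majorant: let $h$ be the $\mathscr{A}$-harmonic function on $B$ (existing by Proposition \ref{105} with obstacle $-\infty$ and boundary values determined by $u$ on $\partial B$), and set $\tilde w := \min\{u,\, \obsi(\pvr_0)+\eta + (h-\inf_B h)\}$. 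One verifies $\tilde w \in \kob$ since the second argument of the minimum is at least $\obsi(\pvr_0)+\eta \geq \obsi$ on $B$, and the uniqueness part of Proposition \ref{105} then forces $\tilde w = u$ on $B$, pinning $u$ pointwise below the continuous function $\obsi(\pvr_0)+\eta + (h-\inf_B h)$. Controlling $\operatorname{osc}_{B(\pvr_0, r/2)} h$ by the oscillation decay for $\mathscr{A}$-harmonic functions and shrinking $r$ yields $\limsup_{\pvry \to \pvr_0} u(\pvry) \leq u(\pvr_0)+2\eta$; sending $\eta \to 0$ gives upper semicontinuity at $\pvr_0$, hence continuity there. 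Once continuity of $u$ is established globally, the set $\{u>\obsi\}$ is open and the solution property on it is precisely the one derived in step (ii), completing the proof.
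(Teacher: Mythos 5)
The paper offers no proof of Proposition \ref{95}; it simply defers to \cite[Theorem 4.11]{harjulehto2006unbounded} and \cite[Theorem 3.67]{heinonen1993nonlinear}, and your three-part outline is exactly that classical argument (supersolution property and lsc representative; perturbation test functions $v=u\mp\epsilon\varphi$ on the non-coincidence set; comparison with an $\mathscr{A}$-harmonic majorant at coincidence points), so in approach you are aligned with what the author intends. Two steps of part (iii) need repair as written, though. First, the deduction ``$\tilde w\in\kob$ plus uniqueness in Proposition \ref{105} forces $\tilde w=u$'' is not valid: admissibility of $\tilde w$ does not make it a solution, so uniqueness gives nothing. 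The correct tool is the unnamed lemma following Proposition \ref{105}: $v:=\obsi(\pvr_0)+\eta+(h-\inf_B h)$ is a supersolution (being $\mathscr{A}$-harmonic plus a constant) with $\min\{u,v\}=\tilde w$ admissible for the localized obstacle problem on $B$, hence $v\geq u$ a.e.\ in $B$; this also requires the routine localization fact that $u|_B$ solves the obstacle problem on $B$ with its own boundary values. Second, your endgame via oscillation decay of $h$ does not close: the bound you obtain is $u(\pvry)\leq\obsi(\pvr_0)+\eta+h(\pvry)-\inf_B h$, and $h(\pvr_0)-\inf_B h$ is an oscillation over the \emph{fixed} ball $B$, which shrinking the ball around $\pvr_0$ does not make small. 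The standard fix is elementary: by comparison with constants, $\inf_B h\geq\inf_{B}\obsi\geq\obsi(\pvr_0)-\eta$ (since $h$ has boundary values $u\geq\obsi$), while $h\leq u$ in $B$ gives $h(\pvr_0)\leq u(\pvr_0)=\obsi(\pvr_0)$; continuity of $h$ at $\pvr_0$ then yields $\limsup_{\pvry\to\pvr_0}u(\pvry)\leq u(\pvr_0)+2\eta$ with no oscillation decay needed. Finally, note that both (ii) and (iii) lean on interior continuity of $\mathscr{A}$-harmonic functions and pointwise comparison for supersolutions in this variable-exponent Finsler setting; these are available from the Section \ref{122} estimates and the cited references, but they are substantive inputs and should be stated as such rather than left implicit.
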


\section{Basic estimates} \label{122}

The main results of this section are Propositions \ref{67} and \ref{84}, and  \ref{82}. As a consequence of these results, we have the following:
\begin{proposition}
Let $\obsi: \Omega \to [-\infty, \infty)$ be continuous. Let $\{\ue\} \subset \lwo\cap \kob$ be a family of functions, and let $\uu \in \lwo \cap L^{\infty}_{\loc}(\Omega)$ be an element. Assume that property \ref{94} is satisfied and that $\uu \in \kob$ is a solution to the obstacle problem \eqref{4}. Then, for all $\ell$, $\ue$ is continuous.
\end{proposition}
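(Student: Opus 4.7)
The statement concludes only qualitative continuity of each $\ue$, so the strategy is to extract the qualitative part of the estimates established in Propositions~\ref{67}, \ref{84}, and \ref{82}. The route is to first obtain continuity of the reference obstacle solution $\uu$ from Proposition~\ref{95}, and then transfer it to $\ue$ via the perturbation identity in \ref{94}.

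First, I would invoke Proposition~\ref{95} for $\uu$: since the obstacle $\obsi$ is continuous and $\uu\in\kob$ solves \eqref{4}, we get $\uu\in C(\Omega)$, and in particular $\uu\in L^\infty_{\loc}(\Omega)$. Combined with the pointwise bound $|\ue-\uu|\leq\vinf_0$ from \eqref{28}, this upgrades to $\ue\in L^\infty_{\loc}(\Omega)$, which is the local-boundedness hypothesis needed to run any interior regularity argument.

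Next, fix $x_0\in\Omega$ and a small ball $B(x_0,r)$ sitting in a coordinate chart with $\overline{B(x_0,r)}\subset\Omega$. The identity \eqref{10} asserts that the equation of $\uu$ is tested to zero by displacements of the form $(\ue-\uu)\zeta$ for cutoffs $\zeta$ supported in $B(x_0,r)$, and the uniform closeness $|\mathscr{A}(\cdot,\grad\ue)-\mathscr{A}(\cdot,\grad\uu)|_{\fnf}\leq\vinf\leq\consfi/4$ from the same \ref{94}, together with the coercivity \ref{16}, lets one convert a Caccioppoli inequality for $\uu$ into one for $\ue$ with a controlled error absorbable into the main energy term. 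This is precisely the input processed by Propositions~\ref{67}, \ref{84}, and \ref{82}; extracting only their qualitative content yields a decay-of-oscillation estimate $\operatorname{osc}_{B(x_0,\rho)}\ue\leq\omega(\rho)$ with $\omega(\rho)\to 0$ as $\rho\to 0$. Since $x_0$ was arbitrary, $\ue\in C(\Omega)$.

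The main obstacle is conceptual rather than computational: $\ue$ is not itself a solution of any obstacle problem, and Proposition~\ref{95} cannot be applied to $\ue$ directly. All regularity must be routed from $\uu$ to $\ue$ through the perturbation identity \eqref{10}, and the smallness $\vinf\leq\consfi/4$ is precisely what guarantees that the error produced when transferring the Caccioppoli bound from $\uu$ to $\ue$ is absorbed by the coercivity. The only bookkeeping point is that, on the contact set where $\ue=\obsi$, the oscillation of $\ue$ is controlled by that of $\obsi$, which is immediate from the continuity of $\obsi$ and does not require any further quantitative hypothesis such as Hölder regularity.
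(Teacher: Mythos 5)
Your overall route is the one the paper intends: the paper omits this proof, deferring to the standard continuity argument for obstacle problems (\cite[Theorem 3.67]{heinonen1993nonlinear} and its variable-exponent analogues), and the relevant machinery is exactly the set of estimates in Section \ref{122}, which are already stated for $\ue$ itself under property \ref{94} --- so the ``transfer from $\uu$ to $\ue$ through \eqref{10} and \eqref{28}'' that you describe is not something you need to perform separately; it is built into the derivation of Propositions \ref{67} and \ref{84}. The genuine gap is that the crux of the argument is precisely the sentence you leave as a black box: ``extracting only their qualitative content yields a decay-of-oscillation estimate.'' Propositions \ref{67} and \ref{84} are sup-bounds and negative-moment bounds, not oscillation estimates, and turning them into oscillation decay is the entire proof. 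Concretely one must: fix $\pvr_0$, write $M(\rho)$ and $m(\rho)$ for the essential sup and inf of $\ue$ on $B(\pvr_0,\rho)$, choose a level $N\geq\max\{\sup_{B(\pvr_0,\rho)}\obsi,\ \tfrac12(M(\rho)+m(\rho))\}$ so that the hypothesis $\obsi\leq N$ of Proposition \ref{67}\,(i) holds, bound $M(\rho/2)-N$ by a $\gamma p^{-}$-moment of $\ue-m(\rho)+\rho$, and then control that moment by $m(\rho/2)-m(\rho)+\rho$ via the weak Harnack inequality, i.e.\ Proposition \ref{84} combined with the crossover Lemma \ref{82}. This produces $\operatorname{osc}_{B(\pvr_0,\rho/2)}\ue\leq\theta\,\operatorname{osc}_{B(\pvr_0,\rho)}\ue+C\bigl(\operatorname{osc}_{B(\pvr_0,\rho)}\obsi+\rho\bigr)$ with $\theta<1$, which iterates to continuity because $\obsi$ is continuous. (Be aware that Lemma \ref{82} invokes \eqref{70}, part of property \ref{103}, which is not listed among the hypotheses of this proposition --- a looseness already present in the paper, but one your write-up inherits silently.)

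Your closing remark about the contact set is also incorrect as stated: knowing $\ue=\obsi$ on the contact set does not control $\operatorname{osc}_{B}\ue$ by $\operatorname{osc}_{B}\obsi$, since $\ue$ may oscillate off the contact set. The obstacle enters the argument only through the one-sided inequality $\ue\geq\obsi$ (which controls $m(\rho)$ from below near a contact point) and through the requirement $N\geq\sup_B\obsi$ needed to invoke Proposition \ref{67}\,(i) for the upper bound; this is exactly how the contact case is handled in Step 1 of the proof of Theorem \ref{81}, and it is where the continuity (here) or H\"older continuity (there) of $\obsi$ actually gets used. So the skeleton of your proposal points in the right direction, but both the central combination step and the treatment of the contact set need to be carried out rather than asserted.
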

The proof of this proposition follows the same argument as in \cite[Theorem 4.3]{fu2015removable}, \cite[Theorem 3.67]{heinonen1993nonlinear}, or \cite[Theorem 4.11]{harjulehto2007obstacle}, and is therefore omitted.

For a chart $\psi : \tilde U \rightarrow \psi (\tilde{U}) \subset \Omega$, we define
\begin{gather*}
\|d( \psi ^{-1})  \|:= \sup \{  |d( \psi ^{-1}) _{\pvr} v| _{\mec} \:|\: (\pvr,v) \in T \psi (\tilde U), |v|_{\fnf} (\pvr)= 1\}, \\
 \|d \psi  \|:= \sup \{  |d \psi _{\evrx} \evrz | _{\fnf} (\psi (\evrx ))\:|\:  \evrx \in \tilde U, |\evrz|_{\mec} = 1\},
\end{gather*}
and  $\consman _1 >0$ is a constant such that
\begin{equation} \label{114}
 \consman _1 \geq \|d( \psi ^{-1})  \| \|d \psi  \| .
\end{equation}

\begin{lemma} \label{3}
Let $0<\sigma <\rho$ such that $B(\pvr _0 , \rho)\subset \subset \psi (\tilde U)$. There exists a smooth function $\eta  : B(\pvr _0 , \rho )  \rightarrow [0,1]$ such that  $\eta  = 1$ in $ B(\pvr _0 , \sigma) $,  $\spt \eta  \subset B(\pvr _0 , \rho ) $, and $|\deriv  \eta  | _{\fnfs} \leq \consman _0  \consman _1 (\rho - \sigma)^{-1}$ in $B(\pvr _0 , \rho)$, where $\consman _0 := \consman _0 (n)>0$.
\end{lemma}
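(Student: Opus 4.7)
The plan is to transfer the problem to local coordinates via $\psi$ and build $\eta$ from a regularized Finsler distance function, so that the product $\|d\psi\|\cdot\|d(\psi^{-1})\|\leq\consman_1$ converts a Euclidean gradient estimate into the required Finsler one. First, I would set $\tilde\pvr_0 := \psi^{-1}(\pvr_0)$ and $\tilde f := d(\pvr_0,\cdot)\circ\psi$ on $\tilde U$. By reversibility, $d(\pvr_0,\cdot)$ is $1$-Lipschitz with respect to $d$, and the chart bound $|d\psi_{\evrx}\evrz|_\fnf\leq\|d\psi\|\,|\evrz|_\mec$, applied along straight-line segments contained in $\tilde U$, shows that $\tilde f$ is $\|d\psi\|$-Lipschitz in the Euclidean sense on any convex open subset of $\tilde U$.

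Next, since $\psi^{-1}(\overline{B(\pvr_0,\rho)})$ is compact in $\tilde U$, I would fix $\epsilon\in(0,(\rho-\sigma)/(6\|d\psi\|))$ small enough (by compactness) that a Euclidean mollification of $\tilde f$ at scale $\epsilon$ on the $\epsilon$-neighborhood of this preimage only samples from pairs joined by admissible segments. Standard mollification then yields $\tilde f_\epsilon\in C^\infty$ with
\[
|\nabla\tilde f_\epsilon|_\mec\leq\|d\psi\|\quad\text{and}\quad |\tilde f_\epsilon-\tilde f|\leq\|d\psi\|\,\epsilon<(\rho-\sigma)/6
\]
on a neighborhood of $\psi^{-1}(\overline{B(\pvr_0,\rho)})$. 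I would then compose with a standard one-dimensional bump $\phi\in C^\infty(\mathbb{R};[0,1])$ satisfying $\phi\equiv 1$ on $(-\infty,\sigma+(\rho-\sigma)/3]$, $\phi\equiv 0$ on $[\rho-(\rho-\sigma)/3,\infty)$, and $|\phi'|\leq C(\rho-\sigma)^{-1}$ for a universal constant $C$, and set $\eta:=(\phi\circ\tilde f_\epsilon)\circ\psi^{-1}$, extended by zero. The $(\rho-\sigma)/6$ error from the mollification is absorbed by the $(\rho-\sigma)/3$ buffer at each end of $\phi$, yielding $\eta\equiv 1$ on $B(\pvr_0,\sigma)$ and $\spt\eta\subset B(\pvr_0,\rho)$.

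For the gradient bound, the chain rule together with the chart estimate $|d(\psi^{-1})_\pvr v|_\mec\leq\|d(\psi^{-1})\|\,|v|_\fnf(\pvr)$ give
\[
|\deriv\eta|_\fnfs(\pvr)\leq\|d(\psi^{-1})\|\,|\nabla(\phi\circ\tilde f_\epsilon)|_\mec\leq\|d(\psi^{-1})\|\cdot\|d\psi\|\cdot C(\rho-\sigma)^{-1}\leq C\,\consman_1\,(\rho-\sigma)^{-1},
\]
which yields the conclusion with $\consman_0:=C$. The main delicate point I expect is the local-to-global Lipschitz estimate for $\tilde f$: since $\tilde U$ need not be convex, the inequality $d(\psi(\tilde\pvr),\psi(\tilde\pvry))\leq\|d\psi\|\,|\tilde\pvr-\tilde\pvry|_\mec$ holds only for pairs joined by a segment in $\tilde U$, and the hard part is exploiting the compactness of $\psi^{-1}(\overline{B(\pvr_0,\rho)})$ to choose $\epsilon$ small enough that only such admissible pairs are ever used inside the mollification integral.
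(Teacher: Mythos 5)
Your construction is correct, but it is genuinely different from the paper's. The paper works entirely with the Euclidean geometry of the preimages: it sets $V=\psi^{-1}(B(\pvr_0,\sigma))$, $W=\psi^{-1}(B(\pvr_0,\rho))$, takes a standard Euclidean cutoff $\eta_0$ with $|\nabla_{\mec}\eta_0|_{\mec}\leq C_1(n)\,\dist(V,\mathbb{R}^n\backslash W)^{-1}$, and then bounds the Euclidean separation from below by $(\rho-\sigma)/\|d\psi\|$ by comparing the Finsler length of the straight segment joining (nearly) closest boundary points with its Euclidean length; pushing forward by $\psi$ and using $\|d(\psi^{-1})\|$ gives the factor $\consman_1$. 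You instead pull back the Finsler distance $d(\pvr_0,\cdot)$, mollify it, and compose with a one-dimensional profile. Both routes produce the same bound $\consman_0\consman_1(\rho-\sigma)^{-1}$, and your accounting is sound: reversibility is indeed what makes $d(\pvr_0,\cdot)$ two-sidedly $1$-Lipschitz, the local Lipschitz constant $\|d\psi\|$ passes to $|\nabla\tilde f_\epsilon|_{\mec}$ by compactness (only the local constant matters for the pointwise gradient bound), the $(\rho-\sigma)/6$ mollification error is absorbed by the $(\rho-\sigma)/3$ buffers of $\phi$, and the final duality step $|\deriv\eta|_{\fnfs}\leq\|d(\psi^{-1})\|\,|\nabla(\phi\circ\tilde f_\epsilon)|_{\mec}$ is exactly the same estimate the paper uses. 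The trade-off: the paper's argument is shorter and sidesteps all regularity questions about $d(\pvr_0,\cdot)$ (which is only Lipschitz, hence the need for your mollification step), while yours is more intrinsic — the cutoff is adapted to the metric balls themselves rather than to their chart images — and your constant $\consman_0$ ends up dimension-independent, slightly better than the $\consman_0(n)$ the lemma asks for. Either proof is acceptable here.
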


\begin{proof}
Set $V:= \psi ^{-1} (B(\pvr _0 , \sigma ) )$ and $W:=\psi ^{-1} ( B (\pvr _0 ,  \rho)  )$. There exists a smooth function $\eta  _0 : V \subset \mathbb R ^n \rightarrow [0,1]$ such that $\eta  _0 =1$ in $V$, $\spt \eta  _0 \subset W$, and $|\nabla _{\mec}  \eta  _0 | _{\mec}\leq C _1(n) (\dist (V , \mathbb{R}^n \backslash W) )^{-1}$.

Let $\evrx  \in \partial V$ and $\evry\in \partial W$ such that $\dist (V , \mathbb{R}^n \backslash W)=|\evrx - \evry| _{\mec}$. For $\alpha (t) :=\psi ((1-t)\evrx + t \evry)$, we have
\begin{equation} \label{48}
\rho - \sigma \leq \int _0 ^1 \fnf ( \alpha , \alpha ^{\prime}) \, \dt \leq  \|d \psi  \| |\evrx - \evry| _{\mec}.
\end{equation}

Then $\eta  = \eta  _0 \circ  \psi ^{-1}   : B(\pvr _0 , \rho) \rightarrow [0,1]$ satisfies $\eta  =1$ in $B(\pvr _0 , \sigma)$, $\spt \eta  \subset B(\pvr _0 ,  \rho)$, and
\begin{align*}
&|\deriv \eta |_{\fnfs } (\pvr)= \sup _{v\in T_{\pvr} \Omega \backslash \{0\}}  \frac{ \deriv \eta _{\pvr}(v)}{|v|_{\fnf}(\pvr)} \leq \|d (\psi ^{-1}) \|  |\nabla _{\mec} \eta  _0 |_{\mec } (\psi ^{-1} (\pvr))\\
 & \quad \ \leq C_1 \|d (\psi ^{-1}) \|  |\evrx - \evry|^{-1} _{\mec} \quad \text { in } \quad B(\pvr _0 ,  \rho)  .
\end{align*}

Using \eqref{48}, we prove the lemma.\end{proof}

\subsection{Upper bound for solutions of the obstacle problem and for supersolutions}

In this subsection, the objective is to prove Proposition \ref{67}. Before that, we show the auxiliary result:
\begin{lemma}
 Let \(N_0 \geq |N|\), \(q \geq 0\), and \(B(\pvr_0, \tr)  \subset \Omega\) with \(0 < \tr \leq 1\). Let \(\eta \in C_0^\infty(B(\pvr_0, \tr))\) with \(0 \leq \eta \leq 1\). Consider a family $\{\ue\} \subset \lwo  $ and an element $\uu \in \lwo \cap L^{\infty} _{\loc} (\Omega)$. Then, we have:
\begin{enumerate}[label=(\roman*)]
\item \label{1} Assume that property \ref{94} is satisfied and $\uu, \ue \in \kob$ for all $\ell$. If $\uu $ is a solution of the obstacle problem \eqref{4} with the obstacle $\obsi \leq N$ in $B(\pvr_0, \tr)$, then
\begin{equation} \label{25}
\begin{aligned}
& \int_{\Omega} \left[ (\ue -N)^{+}+r \right]  ^q|  \grad  (\ue -N)^{+}| _{\fnf} ^{p_{\tr } ^{-}} \eta  ^{p_{\tr } ^{+}} \, \dwvi  \\
&\quad  \  \leq   \cones _1 \int_{\Omega}  [(\ue -N)^{+} + r]  ^{p+q} |\deriv  \eta  |_{\fnfs} ^{p } \, \dwvi     +  \cones _2 \int_{\Omega }    \left[ (\ue -N)^{+}+r \right]  ^q \eta  ^{p_{\tr } ^{+}} \, \dwvi ,
\end{aligned}
\end{equation}
where  
{\footnotesize \begin{align*}
\cones _1 := \frac{4 \consfii  p^{+} _{\tr } }{ \consfi  p^{-} _{\tr }} \max \left\{ \left[2 \consfii  (p^{+} _{\tr } -1 ) / \consfi  \right]^{p^+ _{\tr } -1} , \left[2 \consfii  (p^{+} _{\tr } -1 ) / \consfi  \right]^{p^- _{\tr } -1}  \right\} + \frac{4\vinf  p^{+} _{\tr } }{ \consfi  p^{-} _{\tr }}
\end{align*}
and 
$$
\cones _2:= \frac{4\vinf [  (p^+ _{\tr } )^2-1 ] }{\consfi  p^+ _{\tr }} + 1 .
$$}

\item \label{2} Assume that \eqref{28} (of property \ref{94}) holds. If $\uu $ is a supersolution of \eqref{13} in $\Omega$, then
\begin{equation} \label{22}
\begin{aligned}
& \int_{\Omega} \left[ |(\ue -N)^{-} |+r \right]  ^q|  \grad  (\ue -N)^{-}| _{\fnf}^{p_{\tr } ^{-}} \eta  ^{p_{\tr } ^{+}} \, \dwvi  \\
&\quad  \  \leq \cones _1  \int_{\Omega}  \left[ |(\ue -N)^{-}| + r \right]  ^{p +q} |\deriv  \eta  |_{\fnfs}^{p} \, \dwvi    + \cones _2 \int_{\Omega}  \left[ |(\ue -N)^{-}|+r \right]  ^q \eta  ^{p_{\tr } ^{+}} \, \dwvi   .
\end{aligned}
\end{equation}
 
\end{enumerate}
In \ref{1} and \ref{2},  $p_{\tr } ^{-}:=\inf _{ B( \pvr _0, \tr )   } p$, $p_{\tr } ^{+}:=\sup _{B( \pvr _0, \tr )   } p$, $(\ue -N)^{+}:=\max \{\ue -N, 0\}$, and $(\ue -N)^{-}=\min \{\ue -N, 0\}$.
\end{lemma}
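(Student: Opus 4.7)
Both parts are Caccioppoli-type inequalities for $\ue$; part (ii) needs only the supersolution property of $\uu$ (which always holds for obstacle-problem solutions), while part (i) exploits the stronger fact that $\uu$ is actually an obstacle-problem solution with $\obsi \leq N$ (so that $\uu$ has a subsolution-like flavor above level $N$). In both parts the nonstandard features are (a) the unknown in the Caccioppoli is $\ue$ rather than $\uu$, so property \ref{94} must be used both pointwise (via \eqref{28}: $\mathscr{A}(\cdot,\grad\uu) = \mathscr{A}(\cdot,\grad\ue) + E$ with $|E|_{\fnfs}\leq\vinf$) and integrally (via \eqref{10}), and (b) the variable exponent, which forces constants involving both $p^+_{\tr}$ and $p^-_{\tr}$ and motivates the elementary reduction $|\grad(\ue - N)^{\pm}|^{p^-_{\tr}} \leq |\grad(\ue-N)^{\pm}|^{p} + 1$ that is applied at the very end (this accounts for the ``$+1$'' in $\cones _2$).

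For part (ii), my plan is to plug the nonnegative test function $\varphi = \eta^{p^+_{\tr}}\bigl\{[|(\ue-N)^-|+r]^{q+1}-r^{q+1}\bigr\}/(q+1)$ directly into the supersolution inequality $\int \mathscr{A}(\cdot,\grad\uu)\multg\deriv\varphi \geq 0$. Using $\deriv|(\ue-N)^-| = -\chi_{\{\ue<N\}}\,\deriv\ue$, the chain rule turns the $\geq 0$ into an upper bound on $\int \eta^{p^+_{\tr}}[|(\ue-N)^-|+r]^q\,\mathscr{A}(\cdot,\grad\uu)\multg\deriv\ue\,\chi_{\{\ue<N\}}$, controlled by a boundary term containing $\deriv\eta$. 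Inserting $\mathscr{A}(\cdot,\grad\uu) = \mathscr{A}(\cdot,\grad\ue)+E$, coercivity \ref{16} gives pointwise $\geq \consfi|\grad(\ue-N)^-|^{p} - \vinf|\grad(\ue-N)^-|$; growth \ref{21} together with $|\mathscr{A}(\cdot,\grad\uu)|_{\fnfs} \leq \consfii|\grad(\ue-N)^-|^{p-1} + \vinf$ bounds the boundary term; and Young's inequality with conjugate exponents $p/(p-1)$ and $p$ applied to $|\grad(\ue-N)^-|^{p-1}|\deriv\eta|_{\fnfs}$ produces the factors $[2\consfii(p^+_{\tr}-1)/\consfi]^{p^{\pm}_{\tr}-1}$ in $\cones_1$ (the $\max$ reflects that the variable $p$ ranges over $[p^-_{\tr},p^+_{\tr}]$). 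A second application of Young of the form $\vinf\,a \leq \epsilon\,a^{p} + C\vinf$ absorbs the $\vinf|\grad(\ue-N)^-|$ contribution on the left and generates the $\vinf$-dependent pieces of $\cones_1$ and $\cones_2$. Absorbing and then applying $|\grad|^{p^-_{\tr}}\leq|\grad|^{p}+1$ yields \eqref{22}.

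For part (i) the template is identical, but a ``subsolution-side'' test function is needed. My plan is to use $v = \ue - \eta^{p^+_{\tr}}\bigl\{[(\ue-N)^++r]^{q+1}-r^{q+1}\bigr\}/(q+1)$ in the obstacle inequality \eqref{4}. Writing $v - \uu = (\ue - \uu) - \varphi$ and invoking \eqref{10}, the obstacle inequality collapses to the desired reverse statement $\int \mathscr{A}(\cdot,\grad\uu)\multg\deriv\varphi \leq 0$. From this point the computation is exactly as in part (ii), with $(\ue-N)^+$ replacing $|(\ue-N)^-|$ and no sign reversal from the chain rule. The admissibility of $v$ reduces to the single condition $\varphi \leq \ue - \obsi$ on the support $\{\ue>N\}\cap\{\eta>0\}$: on this set $\ue - \obsi \geq \ue - N$ because $\obsi\leq N$, and the elementary inequality $[(s+r)^{q+1}-r^{q+1}]/(q+1) \leq s$ (with $s = \ue - N$) holds precisely when $s+r\leq 1$. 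This is the one nonroutine step, and it is the reason for the hypotheses $\tr\leq 1$ and $N_0\geq |N|$: combined with $\uu\in L^\infty_{\loc}$ and $|\ue-\uu|\leq\vinf_0$, they force $(\ue-N)^+ + r$ into the regime where the comparison holds (if necessary, after a truncation of $\ue$ at level $N_0$ that does not change the final form of the inequality).

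The only real obstacle is the admissibility check in part (i); the rest is variable-exponent Caccioppoli bookkeeping in the Finsler setting, using the structure conditions \ref{16}--\ref{17} and the comparison \eqref{28}, together with \eqref{10} to convert the obstacle inequality for $\uu$ into the needed inequality for $\ue$.
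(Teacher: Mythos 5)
Your overall strategy is a legitimate alternative to the paper's: instead of testing with the power-weighted function directly, the paper tests with the simple truncations $\zeta=-[(\ue-N-h)^{+}-r]\eta^{p^{+}_{\tr}}$ at each level $h\geq -r$, derives a Caccioppoli estimate on each level set $\{\ue>N+h\}$, and only afterwards introduces the weight $[(\ue-N)^{+}+r]^{q}$ by integrating over $h$ with the distribution-function formula (Proposition \ref{7}). The point of that detour is precisely to make admissibility trivial: $\ue-[(\ue-N-h)^{+}-r]\eta^{p^{+}_{\tr}}\geq \min\{\ue,\,N+h+r\}\geq\obsi$ with no smallness condition. Your part (ii) is fine, since there the test function only needs to be nonnegative.

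The genuine gap is exactly where you located it: the admissibility check in part (i). You reduce it to $[(s+r)^{q+1}-r^{q+1}]/(q+1)\leq s$ with $s=(\ue-N)^{+}$, which (by the mean value theorem) holds when $s+r\leq 1$ — but nothing in the hypotheses forces $(\ue-N)^{+}+r\leq 1$. The bounds $\uu\in L^{\infty}_{\loc}(\Omega)$ and $|\ue-\uu|\leq\vinf_{0}$ give local boundedness of $\ue$, not boundedness by $1-r$, and $\tr\leq 1$ constrains the radius, not the oscillation of $\ue$. Truncating $\ue$ at level $N_{0}$ does not repair this either: the truncation changes $\grad(\ue-N)^{+}$ on the set where $\ue$ is large, so the resulting inequality is not \eqref{25}. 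The clean repair inside your framework is to test \eqref{4} with $v_{\lambda}=\ue-\lambda\varphi$ for a small constant $\lambda>0$ (admissible because $\varphi\leq[(\ue-N)^{+}+r]^{q}(\ue-N)^{+}\leq\lambda^{-1}(\ue-\obsi)$ on $\operatorname{supp}\eta$ once $\lambda\leq[\sup_{\operatorname{supp}\eta}(\ue-N)^{+}+r]^{-q}$, which is positive by local boundedness); since \eqref{10} kills the term $\int\mathscr{A}(\cdot,\grad\uu)\multg\deriv(\ue-\uu)\,\dwvi$, the variational inequality yields $\lambda\int\mathscr{A}(\cdot,\grad\uu)\multg\deriv\varphi\leq 0$ and the factor $\lambda$ cancels, so the final estimate is $\lambda$-independent. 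With that correction, the rest of your computation (the decomposition $\mathscr{A}(\cdot,\grad\uu)=\mathscr{A}(\cdot,\grad\ue)+E$ with $|E|_{\fnf}\leq\vinf$, the two applications of Young's inequality, and the final reduction $|\grad|^{p^{-}_{\tr}}\leq|\grad|^{p}+1$) matches the paper's Steps 1--3 in substance and produces constants of the stated form.
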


\begin{proof}

\ref{1} {\it Steep 1.} Let $h \geq - r$ and define $\zeta =-[(\ue  -N-h)^{+} -r]\eta  ^{p_{\tr }^{+}}$. Then $\ue +\zeta =\ue -[(\ue - N -h)^{+} -r] \eta  ^{p_{\tr }^{+}} \geq \obsi$ in $B(\pvr _0, \tr)$, and $\ue +\zeta -\obsii \in \wob$. 

We have
\begin{gather*}
\deriv  \zeta =-\eta  ^{p_{\tr }^{+}} \deriv (\ue -N-h)^{+}-p_{\tr } ^{+} \eta  ^{p_{\tr } ^{+}-1} \left[(\ue - N - h)^{+} -r\right]\deriv \eta  .
\end{gather*}

Taking $v:=\ue +\zeta $:
\begin{equation*}
\begin{aligned}
&\int_{\Omega}   \mathscr{A}(\cdot , \grad  \ue )  \multg  \deriv (v - \ue)  \, \dwvi    \\
&\quad \ = \int_{\Omega}  \mathscr{A}( \cdot , \grad  \uu ) \multg  \deriv  (v-\uu )  \\
&\quad \quad \ +  \left[ \mathscr{A}( \cdot , \grad  \ue )  - \mathscr{A}( \cdot , \grad  \uu ) \right] \multg  \deriv  (v-\uu )  +
\mathscr{A}( \cdot , \grad  \ue )  \multg  \deriv  (\uu-\ue )  \, \dwvi \\
&\quad  \ =   \int_{\Omega}   \mathscr{A}( \cdot , \grad  \uu ) \multg  \deriv  (v-\uu )  \, \dwvi   +\funcion (\ell , N, h),
\end{aligned}
\end{equation*}
where 
\begin{align*}
&\funcion (\ell , N , h):=   \int _{\Omega} \left[ \mathscr{A} ( \cdot , \grad  \ue ) - \mathscr{A} ( \cdot , \grad  \uu) \right] \multg   \left\{ \deriv (\ue - \uu) - \eta ^{p^+ _{\tr }} \deriv  (\ue - N -h )^+ \right. \\
& \quad \ \left. - p^+ _{\tr } \eta ^{p^+ _{\tr } - 1} \left[ (\ue - N - h)^+ - r \right]\deriv \eta\right\}\, \dwvi  + \int _{\Omega}   \mathscr{A} (\cdot ,\grad  \ue ) \multg  \deriv (\uu -\ue)  \, \dwvi ,
\end{align*}

Since $\uu \in \kob$ is a solution of the obstacle problem \eqref{4}, and $\ue \in \kob$:
\begin{equation*}
\int_{\Omega}    \mathscr{A}(\cdot , \grad  \ue )   \multg \left\{    -\eta  ^{p_{\tr } ^{+}} \deriv (\ue -N-h)^{+}-p_{\tr } ^{+} \eta  ^{p_{\tr } ^{+}}\left[ (\ue - N -h)^{+} -r\right]\deriv  \eta     \right\} \, \dwvi   \geq \funcion (\ell , N, h) ,
\end{equation*}

Let $\Omega _{\ell, N,h}=\{\pvr \in \Omega \:|\: \ue (\pvr)>N+h\}$. By \ref{16} and \ref{21},
\begin{align}
& \consfi  \int_{\Omega_{\ell ,  N,h}}|\grad  \ue |_{\fnf}^{p} \eta  ^{p_{\tr } ^{+}} \, \dwvi \nonumber \\
& \quad \ \leq  p^{+} _{\tr } \consfii  \int_{\Omega _{\ell , N,h}}|\grad  \ue |_{\fnf }^{p-1}|\deriv \eta  |_{\fnfs } \left|  (\ue -N-h)^+ -r \right|  \eta  ^{p_{\tr } ^{+}-1} \, \dwvi  - \funcion (\ell , N , h) \label{65}\\
& \quad \ \leq  p^{+} _{\tr } \consfii  \int_{\Omega _{\ell , N,h}}  \frac{p^+ _{\tr } -1}{p^+ _{\tr }} \epsilon |\grad  \ue |_{\fnf }^{p} \eta ^{p^+ _{\tr }}  + \frac{\max \{ \epsilon ^{-p^+ _{\tr } +1 } , \epsilon ^{-p^- _{\tr } +1 } \} }{ p^- _{\tr } }  \left|   (\ue -N-h)^+ -r \right|^{p}   \eta  ^{p_{\tr } ^{+}-p} |\deriv  \eta  |_{\fnfs } \, \dwvi  \nonumber\\
&\quad \quad \ - \funcion (\ell , N , h) . \label{5}
\end{align}

Taking $\epsilon =\frac{\consfi }{2 \consfii  (p^{+} _{\tr } -1 )}$ in \eqref{5}, we have
\begin{equation} \label{6}
\begin{aligned}
& \frac{\consfi }{2} \int_{\Omega _{\ell , N , h} }|\grad  \ue |_{\fnf}^{p} \eta  ^{p_{\tr } ^{+}} \, \dwvi  \leq \frac{ \consfii  p^{+} _{\tr } }{ p^{-} _{\tr }} \max \left\{ \left[2 \consfii  (p^{+} _{\tr } -1 ) / \consfi  \right]^{p^+ _{\tr } -1} , \left[2 \consfii  (p^{+} _{\tr } -1 ) / \consfi  \right]^{p^- _{\tr } -1}  \right\} \\
&\quad \quad \ \cdot  \int_{\Omega_{\ell , N , h}}  \left| (\ue -N-h)^{+} - r\right|  ^{p} \eta  ^{p_{\tr } ^{+}-p}|\deriv  \eta  |_{\fnfs }^{p} \, \dwvi      - \funcion (\ell , N , h) .
\end{aligned}
\end{equation}

$ $


{\it Steep 2.} Now we will estimate the last term of \eqref{6}.

From \eqref{10} and \eqref{113},
\begin{align}
& - \funcion (\ell , N , h) \nonumber\\
& \quad  =     \int _{\Omega _{\ell , N , h}} \left[ \mathscr{A} ( \cdot , \grad  \ue ) - \mathscr{A} ( \cdot , \grad  \uu) \right] \multg  \left\{  \eta ^{p^+ _{\tr }} \deriv  (\ue - N -h )^+  + p^+ _{\tr } \eta ^{p^+ _{\tr } - 1} \left[(\ue - N - h)^+ - r \right]\deriv  \eta\right\}\, \dwvi  \nonumber \\
& \quad  \leq    \int _{\Omega _{\ell , N ,h}} \left| \mathscr{A} ( \cdot , \grad  \ue ) - \mathscr{A} ( \cdot , \grad  \uu) \right|_{\fnf}  \eta ^{p^+ _{\tr }} | \grad  \ue   |_{\fnf} \nonumber\\
& \quad \quad   + p^+ _{\tr } \left| \mathscr{A} ( \cdot , \grad  \ue ) - \mathscr{A} ( \cdot , \grad  \uu) \right| _{\fnf} \eta ^{p^+ _{\tr } - 1} \left| (\ue - N + h)^+ - r\right| | \deriv \eta |_{\fnfs }\, \dwvi  . \label{29}
\end{align}

From \eqref{28}, 
\begin{equation*}
\left| \mathscr{A} ( \cdot , \grad  \ue ) - \mathscr{A} ( \cdot , \grad  \uu) \right|_{\fnf} \leq \vinf < \frac{\consfi  p_{\tr} ^-}{4} \quad \text { in } \quad \Omega.
\end{equation*}

We also have, in $B(\pvr _0 , \tr)$,
\begin{equation*} 
|\grad  \ue |_{\fnf}\leq \frac{1}{p^- _{\tr}} |\grad  \ue|_{\fnf}^p + \frac{p^+ _{\tr} -1}{p^+ _{\tr}}
\end{equation*}
and 
\begin{align*}
&\eta ^{p^+_{\tr} -1}  \left|(\ue - N -h )^+ -r\right| |\deriv  \eta|_{\fnfs }\leq \frac{1}{p} \left| (\ue - N - h)^+-r \right|^p |\deriv \eta|_{\fnfs }^p + \frac{p-1}{p} \eta ^{\frac{p^+ _{\tr} -1}{p-1} p}\\
& \quad \ \leq \frac{1}{p^- _{\tr}} \left| (\ue - N -h)^+ -r \right|^p |\deriv  \eta|_{\fnfs }^p + \frac{p^+_{\tr}-1}{p^+ _{\tr}} \eta ^{p^+ _{\tr}}.
\end{align*}

Consequently, from \eqref{29},
\begin{equation} \label{30}
\begin{aligned}
&  - \funcion (\ell , N , h) \\
& \quad \ \leq  \frac{\consfi }{4} \int _{\Omega _{\ell , N , h}}|\grad  \ue|_{\fnf}^p \eta ^{p^+_{\tr}}\, \dwvi  + \frac{\vinf  (p^+ _{\tr} -1)}{p^+ _{\tr}} \int _{\Omega _{\ell , N , h}} \eta ^{p^+_{\tr}} \, \dwvi  \\
& \quad \quad \ + \frac{\vinf p^+ _{\tr}}{p^- _{\tr}} \int _{\Omega _{\ell , N , h}}  \left| (\ue - N - h)^+ -r \right|^p |\deriv  \eta| _{\fnfs }^p \, \dwvi  + \vinf  (p^+_{\tr}-1 ) \int _{\Omega _{\ell , N , h}}  \eta ^{p^+ _{\tr}} \, \dwvi  .
\end{aligned}
\end{equation}

$ $

{\it Steep 3.} Using \eqref{6} and \eqref{30},
\begin{equation} \label{31}
 \frac{\consfi }{4} \int_{\Omega _{\ell , N , h} }|\grad  \ue |_{\fnf}^{p} \eta  ^{p_{\tr } ^{+}} \, \dwvi  \leq \cones  \int_{\Omega_{\ell , N , h}}  \left| (\ue -N-h)^{+} - r\right|  ^{p} |\deriv  \eta  |_{\fnfs }^{p} \, \dwvi     + \frac{\vinf  [(p^+ _{\tr})^2 -1]}{p^+ _{\tr}} \int _{\Omega _{\ell , N , h}} \eta ^{p^+_{\tr}} \, \dwvi ,
\end{equation}
where 
$$
\cones = \frac{ \consfii  p^{+} _{\tr } }{ p^{-} _{\tr }} \max \left\{ \left[2 \consfii  (p^{+} _{\tr } -1 ) / \consfi  \right]^{p^+ _{\tr } -1} , \left[2 \consfii  (p^{+} _{\tr } -1 ) / \consfi  \right]^{p^- _{\tr } -1}  \right\} + \frac{\vinf  p^{+} _{\tr } }{ p^{-} _{\tr }}.
$$

By   Proposition \ref{7}   (see also  \cite[equation (3.31)]{heinonen1993nonlinear}), we have 
\begin{equation*}
\begin{aligned}
&\int _{\Omega} \left[(\ue - N) ^+ +r \right]^q |\grad  (\ue - N) ^+|_{\fnf}^p \eta ^{p^+ _{\tr }} \, \dwvi  \\
& \quad  \ = q\int ^\infty _0 t^{q-1} \int _{ \{ (\ue - N)^+ +r >t \} } |\grad  (\ue - N)^+|_{\fnf} ^p \eta ^{p^+ _{\tr }} \, \dwvi  \dt\\
&\quad \ = q\int ^\infty _{-r} (h+r)^{q-1} \int _{ \{ \ue - N  > h \} } |\grad  (\ue - N)^+|_{\fnf} ^p \eta ^{p^+ _{\tr }} \, \dwvi  \dhd .
\end{aligned}
\end{equation*}

From \eqref{31},
\begin{equation} \label{8}
\begin{aligned}
& \frac{\consfi }{4}\int_{\Omega} \left[  (\ue -N)^{+}+r\right]  ^q|  \grad (\ue -N)^{+} |  _{\fnf} ^{p} \eta  ^{p_{\tr } ^{+}} \, \dwvi  \\
& \quad \ \leq q \frac{\consfi }{4} \int_{-r}^{\infty}(h+r)^{q-1} \int_{\Omega_{\ell,N,h}}|\grad  \ue |_{\fnf} ^{p} \eta  ^{p_{\tr } ^{+}} \, \dwvi  \dhd \\
&\quad  \  \leq q \cones \int_{-r}^{\infty}(h+r)^{q-1}  \int_{\Omega_{\ell , N , h}}  \left|(\ue -N-h)^{+}-r\right|  ^{p} |\deriv  \eta  |_{\fnfs }^{p} \, \dwvi  \dhd  \\
&\quad  \quad \   +  \frac{q\vinf  [(p^+ _{\tr})^2 -1]}{p^+ _{\tr}} \int_{-r}^{\infty}(h+r)^{q-1} \int _{\Omega _{\ell , N , h}} \eta ^{p^+_{\tr}} \, \dwvi \dhd
\end{aligned}
\end{equation}

Consequently, taking into account that $|(\ue - N - h)^+ - r|\leq (\ue - N)^+ +r$ in $\Omega _{\ell , N , h}$:
\begin{equation} \label{9}
\begin{aligned}
& \frac{\consfi }{4}\int_{\Omega} \left[  (\ue -N)^{+}+r\right]  ^q|  \grad (\ue -N)^{+} | _{\fnf} ^{p} \eta  ^{p_{\tr } ^{+}} \, \dwvi  \\
&\quad  \  \leq q\cones \int_{0}^{\infty}t^{q-1} \int_{\Omega_{\ell , N , t-r}}  [(\ue -N)^{+} + r]  ^{p} |\deriv  \eta  |_{\fnfs }^{p} \, \dwvi  \dt \\
&\quad \quad \  +  \frac{q \vinf [  (p^+ _{\tr } )^2-1 ] }{p^+ _{\tr }} \int_{0}^{\infty}t ^{q-1}    \int_{\Omega _{\ell , N, t-r}}     \eta ^{p^+ _{\tr }} \, \dwvi  \dt .
\end{aligned}
\end{equation}

Employing $|a|^{p^-_{\tr } } \leq |a|^p + 1$  $\forall a\in \mathbb{R}$ and  Proposition \ref{7}. From \eqref{9}, we have
\begin{equation}\label{11}
\begin{aligned}
& \frac{\consfi }{4}\int_{\Omega} \left[ (\ue -N)^{+}+r \right]  ^q|  \grad  (\ue -N)^{+}| _{\fnf} ^{p_{\tr } ^{-}} \eta  ^{p_{\tr } ^{+}} \, \dwvi  \\
& \quad \ \leq  \frac{\consfi }{4} \int_{\Omega} \left[ (\ue -N)^{+}+r \right]  ^q|  \grad  (\ue -N)^{+}| _{\fnf} ^{p} \eta  ^{p_{\tr } ^{+}} \, \dwvi  +\frac{\consfi }{4}\int_{\Omega}  \left[ (\ue -N)^{+}+r \right]  ^q \eta  ^{p_{\tr } ^{+}} \, \dwvi  \\
&\quad  \  \leq   \cones  \int_{\Omega}  \left[(\ue -N)^{+} + r \right]  ^{p +q} |\deriv  \eta  |_{\fnfs } ^{p} \, \dwvi   \\
&\quad \quad \  +  \left\{\frac{\vinf [  (p^+ _{\tr } )^2-1 ] }{p^+ _{\tr }} + \frac{\consfi }{4}    \right\}\int_{\Omega }    \left[ (\ue -N)^{+}+r \right]  ^q \eta  ^{p_{\tr } ^{+}} \, \dwvi 
\end{aligned}
\end{equation}

Which  conclude the proof of \ref{1}.\\

\ref{2}   Let $h \geq -r$ and define $\zeta =-(\ue - N + h)^{-} \eta  ^{p_{\tr } ^{+}}$, then  $\zeta  \geq 0$ and $\zeta  \in \wob$. So $\zeta $ is a test function for \eqref{14}, then
$$
\begin{aligned}
& \int _{\Omega} \mathscr{A}(\cdot , \grad  \ue ) \multg  \deriv  \zeta  \, \dwvi  = \int _{\Omega} \mathscr{A}(\cdot , \grad  \uu ) \multg  \deriv  \zeta  \, \dwvi    \\
& \quad \ + \int _{\Omega} \left[ \mathscr{A}(\cdot , \grad  \ue ) - \mathscr{A}(\cdot , \grad  \uu ) \right] \multg  \deriv  \zeta  \, \dwvi  .
\end{aligned}
$$
Hence,
$$
\begin{aligned}
& \int_{\Omega} \mathscr{A}(\cdot , \grad  \ue ) \multg  \left[  -\eta  ^{p_{\tr } ^{+}} \deriv (\ue -N+h)^{-}-p_{\tr } ^{+} \eta  ^{p_{\tr } ^{+}-1}(\ue -N+h)^{-} \deriv \eta  \right]   \, \dwvi  
  \geq \hat \funcion ( \ell , N , h) ,
\end{aligned}
$$
where
$$
\begin{aligned}
&\hat \funcion ( \ell , N , h) := \int _{\Omega} \left[ \mathscr{A}(\cdot , \grad  \ue ) - \mathscr{A}(\cdot , \grad  \uu ) \right] \multg  \left[   -\eta  ^{p_{\tr } ^{+}} \deriv (\ue -N+h)^{-} \right.\\
& \quad \quad \ \left. - p_{\tr } ^{+} \eta  ^{p_{\tr } ^{+}-1}(\ue -N+h)^{-} \deriv  \eta  \right] \, \dwvi .
\end{aligned}
$$

Let $\Omega_{\ell , N , h}^{\prime} :=\{\pvr \in \Omega\:|\: \ue (\pvr)<N-h\}$. By \ref{16} and \ref{21}, we have
$$
\begin{aligned}
& \consfi  \int_{\Omega _{\ell , N , h} ^{\prime}}|\grad  \ue | _{\fnf} ^{p} \eta  ^{p_{\tr } ^{+}} \, \dwvi  \leq  p^{+} _{\tr }\consfii  \int_{\Omega_{\ell , N , h}^{\prime}}|\grad  \ue | _{\fnf} ^{p-1}|\deriv  \eta  |_{\fnfs } |  (\ue -N+h)^{-}|   \eta  ^{p_{\tr } ^{+}-1} \, \dwvi  - \hat \funcion ( \ell , N , h)
\end{aligned}
$$

Proceeding similarly as we did for \eqref{65}, we will obtain the following inequality (see \eqref{31}):
\begin{equation} \label{90}
\frac{\consfi }{4} \int_{\Omega ^\prime _{\ell , N , h} }|\grad  \ue | _{\fnf}^{p} \eta  ^{p_{\tr } ^{+}} \, \dwvi  \leq \cones  \int_{\Omega ^\prime _{\ell , N , h}}  \left| (\ue -N-h)^{-} \right|  ^{p} |\deriv  \eta  |_{\fnfs }^{p} \, \dwvi     + \frac{\vinf  [(p^+ _{\tr})^2 -1]}{p^+ _{\tr}} \int _{\Omega ^\prime  _{\ell , N , h}} \eta ^{p^+_{\tr}} \, \dwvi .
\end{equation}
With which we obtain \eqref{22}.\end{proof}


\begin{proposition} \label{67} Suppose the conditions \eqref{78} - \eqref{26} are verified. Assume that   $r \leq \sigma<\rho \leq 2r \leq 2$ and $B(\pvr _0 , 2r) \subset \subset \psi (\tilde U)$.  Let  $|N| \leq N_0$ and  $1<\gamma<1_{\wghtz}$. Consider a family $\{\ue\} \subset \lwo  $ and an element $\uu \in \lwo \cap L^{\infty} _{\loc} (\Omega)$. Then, we have:
\begin{enumerate}[label=(\roman*)]
\item  \label{23} Assume that property \ref{94} is satisfied and $\uu, \ue \in \kob$ for all $\ell$. If $\uu$ is a solution of the obstacle problem \eqref{4}  with the obstacle $\obsi \leq N$ in $B(\pvr _0 , 2r)$, then
\begin{equation} \label{86}
\begin{aligned}
&\underset{  B(\pvr _0 , \sigma )}{\operatorname{ess} \sup } \ \left\{ (\ue -N)^{+}+r \right\}\\
&  \quad \ \leq (\cones _3 1_{\wghtz}/ \gamma)^{\cones _4}        \cones _6 ^{ \frac{ 1_{\wghtz}}{ p_{2r}^{-} (1_{\wghtz} - \gamma) } }  (p_{2r} ^-)^{ \frac{p_{2r} ^+ 1_{\wghtz}}{p_{2r} ^- (1_{\wghtz} - \gamma)} }   \left( \frac{\rho}{\rho-\sigma}\right)^{\frac{p_{2r} ^+ 1_{\wghtz}}{p_{2r} ^- (1_{\wghtz} - \gamma)} }   \\
 & \quad \quad \ \cdot \left\{ \fint _{B(\pvr _0 , \rho )} \left[ (\ue -N)^{+}+r \right] ^{\gamma p_{2r}^{-}} \, \dwvi  \right\} ^{\frac{1}{\gamma p_{2r}^{-}}} ,
\end{aligned}
\end{equation}
where {\footnotesize $\cones _3:= [2(\consman _0 +1) \conspoin _2 ^{-1} (\conspoin _{\wghtz} +1)(\conspoin _4 +1)]^{\cpoin _2} $,  $\cones _4 := \frac{5p_{2r} ^+ 1_{\wghtz}}{p_{2r} ^-( 1_{\wghtz} - \gamma)} + \frac{p_{2r} ^+ 1_{\wghtz} \gamma}{p_{2r} ^- (1_{\wghtz} -\gamma)^2} $, 
$$
\cones _5:=  \left\{ \fint _{B(\pvr _0, 2r  )} \left[ (\ue -N)^{+}+r  \right]^{(p-p_{2r  }^{-}  ) \gamma ^{\prime}} \, \dwvi   \right\}^{\frac{1}{\gamma ^{\prime}}},
$$
and $\cones _6:= \cones _1 \cones_5 \max \{\consman _1 ^{p_{2r}^{-}}, \consman _1 ^{p_{2r}^{+}}\} + \cones _2 + ( \consman _1 p_{2r}^{+} )^{p_{2r}^{-}} +1$.}

\item \label{24} Assume that \eqref{28} (of property \ref{94}) holds. If $\uu$ is a supersolution of \eqref{13} in $\Omega$, then
\begin{equation*}
\begin{aligned}
&\underset{  B(\pvr _0 , \sigma )}{\operatorname{ess} \sup } \ \left\{ |(\ue -N)^{-}|+r \right\}\\
&  \quad \ \leq (\cones _3 1_{\wghtz}/ \gamma)^{\cones _4}        \cones _8 ^{ \frac{ 1_{\wghtz}}{ p_{2r}^{-} ( 1_{\wghtz} - \gamma )} }  (p_{2r} ^-)^{ \frac{p_{2r} ^+ 1_{\wghtz}}{p_{2r} ^- (1_{\wghtz} - \gamma)} }   \left( \frac{\rho}{\rho-\sigma}\right)^{\frac{p_{2r} ^+ 1_{\wghtz}}{p_{2r} ^- (1_{\wghtz} - \gamma)} }   \\
 & \quad \quad \ \cdot \left\{ \fint _{B(\pvr _0 , \rho )} \left[ |(\ue -N)^{-} |+r \right] ^{\gamma p_{2r}^{-}} \, \dwvi  \right\} ^{\frac{1}{\gamma p_{2r}^{-}}} ,
\end{aligned}
\end{equation*}
where  {\footnotesize
$$
\cones _7:=  \left\{ \fint _{B(\pvr _0, 2r  )} \left[ |(\ue -N)^{-} |+r  \right]^{(p-p_{2r  }^{-}  ) \gamma ^{\prime}} \, \dwvi   \right\}^{\frac{1}{\gamma ^{\prime}}},
$$
and $\cones _8:= \cones _1 \cones_7 \max \{\consman _1 ^{p_{2r}^{-}} , \consman _1 ^{p_{2r}^{+}}\} + \cones _2 + ( \consman _1 p_{2r}^{+})^{p_{2r}^{-}} +1 $.}
\end{enumerate}

In \ref{23} and \ref{24},  $p_{2r } ^{-}:=\inf _{B( \pvr _0, 2r )  } p$ and $p_{2r } ^{+}:=\sup _{B( \pvr _0, 2r )  } p$.

\end{proposition}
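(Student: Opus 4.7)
The plan is to prove (i) by a Moser iteration; part (ii) follows by the same argument with \eqref{22} in place of \eqref{25}. Write $w := (\ue-N)^{+} + r$ and, for radii $\sigma \leq \tau_1 < \tau_2 \leq \rho$, choose the cutoff $\eta$ from Lemma \ref{3} adapted to $(\tau_1,\tau_2)$, so that $|\deriv\eta|_{\fnfs} \leq \consman_0\consman_1/(\tau_2-\tau_1)$.

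The first step is to derive a reverse-Hölder inequality from \eqref{25} with parameter $q \geq 0$. The delicate term on the right is $\int w^{p+q}|\deriv\eta|^{p}_{\fnfs}\,\dwvi$, which carries the variable exponent. I factor
$$
w^{p+q}|\deriv\eta|^{p}_{\fnfs} = \bigl(w^{p^{-}_{2r}+q}|\deriv\eta|^{p^{-}_{2r}}_{\fnfs}\bigr)\cdot\bigl(w^{p-p^{-}_{2r}}|\deriv\eta|^{p-p^{-}_{2r}}_{\fnfs}\bigr),
$$
bound the second bracket pointwise using $|\deriv\eta|_{\fnfs}\leq \consman_0\consman_1/(\tau_2-\tau_1)$ and the $\max\{\consman_1^{p^{-}_{2r}},\consman_1^{p^{+}_{2r}}\}$ bookkeeping, and apply Hölder's inequality with exponents $(\gamma,\gamma')$ together with \eqref{80} to absorb the factor $w^{p-p^{-}_{2r}}$ into $\cones_5$. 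This reduces \eqref{25} to a fixed-exponent Caccioppoli estimate. I then apply the Sobolev inequality \eqref{27} with constant exponent $p^{-}_{2r}$ to the test function $v := w^{(q+p^{-}_{2r})/p^{-}_{2r}}\eta^{p^{+}_{2r}/p^{-}_{2r}}$, expand $|\grad v|_{\fnf}$ by the product rule, and use the fixed-exponent Caccioppoli estimate to control the term containing $|\grad w|_{\fnf}$. Collecting all constants into $\cones_6$, I obtain the reverse-Hölder inequality
\begin{equation*}
\biggl(\fint_{B(\pvr_0,\tau_1)} w^{(q+p^{-}_{2r})1_{\wghtz}}\,\dwvi\biggr)^{\!\frac{1}{(q+p^{-}_{2r})1_{\wghtz}}} \leq \biggl(\frac{\cones_3\cones_6(p^{-}_{2r})^{p^{+}_{2r}}\rho^{p^{+}_{2r}}}{(\tau_2-\tau_1)^{p^{+}_{2r}}}\biggr)^{\!\frac{1}{q+p^{-}_{2r}}}\biggl(\fint_{B(\pvr_0,\tau_2)} w^{(q+p^{-}_{2r})\gamma}\,\dwvi\biggr)^{\!\frac{1}{(q+p^{-}_{2r})\gamma}}.
\end{equation*}

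The second step is the Moser iteration: set $q_k + p^{-}_{2r} = p^{-}_{2r}(1_{\wghtz}/\gamma)^k$, so that the exponent on the left-hand side at step $k$ matches the exponent on the right-hand side at step $k+1$, and $\tau_k := \sigma + 2^{-k}(\rho-\sigma)$. Iterating the reverse-Hölder inequality and letting $k\to\infty$ passes the left-hand side to $\|w\|_{L^{\infty}(B(\pvr_0,\sigma))}$. The telescoping series $\sum_k(\gamma/1_{\wghtz})^k = 1_{\wghtz}/(1_{\wghtz}-\gamma)$ yields the exponent $1_{\wghtz}/(p^{-}_{2r}(1_{\wghtz}-\gamma))$ on $\cones_6$ together with the exponent $p^{+}_{2r}1_{\wghtz}/(p^{-}_{2r}(1_{\wghtz}-\gamma))$ on both $\rho/(\rho-\sigma)$ and $p^{-}_{2r}$, while $\sum_k k(\gamma/1_{\wghtz})^k$ produces $\cones_4$ as the exponent on $\cones_3 1_{\wghtz}/\gamma$. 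If the iteration begins at a moment above $\gamma p^{-}_{2r}$ (the natural starting point $q_0 = 0$ gives moment $p^{-}_{2r}\gamma$, which is already the required one), then the argument terminates; otherwise Lemma \ref{47} brings the starting moment down to $\gamma p^{-}_{2r}$, producing exactly \eqref{86}.

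The main obstacle is the bookkeeping in the first step: one must interleave the Hölder inequality (handling the variable-exponent factor) with the Sobolev inequality \eqref{27} (for the fixed-exponent Caccioppoli estimate) in such a way that the factor $p^{-}_{2r}$ from the constant $q\conspoin_{\wghtz}r$ of \eqref{27} ends up raised, after telescoping, to exactly the power $p^{+}_{2r}1_{\wghtz}/(p^{-}_{2r}(1_{\wghtz}-\gamma))$ demanded by the statement, and that the factor $\consman_1$ from the cutoff gradient bound appears only to the maximum of $p^{-}_{2r}$ and $p^{+}_{2r}$ (so it can be absorbed into $\cones_6$). Once this is arranged, the Moser sum collapses mechanically to \eqref{86}.
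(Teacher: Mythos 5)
Your proposal is correct and follows essentially the same route as the paper: the Caccioppoli estimate \eqref{25} with $q=\beta-p^-_{2r}$, the Sobolev inequality \eqref{27} applied to $[(\ue-N)^++r]^{\beta/p^-_{2r}}\eta^{p^+_{2r}/p^-_{2r}}$, Hölder with exponents $(\gamma,\gamma')$ to absorb the variable-exponent factor into $\cones_5$, and a Moser iteration with moments $(1_{\wghtz}/\gamma)^j\gamma p^-_{2r}$ and radii $\sigma+2^{-j}(\rho-\sigma)$. Your constant bookkeeping (in particular the telescoped exponents on $p^-_{2r}$, $\cones_6$, and $\rho/(\rho-\sigma)$) matches the paper's, and you correctly observe that the iteration already starts at the moment $\gamma p^-_{2r}$ so no further interpolation via Lemma \ref{47} is needed.
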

\begin{proof}
\ref{23}  We take \( \tr = 2r \) in \eqref{25} and the function \( \eta \in C_0^\infty(B(\pvr_0, \rho)) \) from Lemma \ref{3}, which satisfies: \( 0 \leq \eta \leq 1 \), \( \eta = 1 \) in \( B(\pvr_0, \sigma) \), and $|\deriv  \eta|_{\fnfs } \leq \consman _0 \consman _1/(\rho-\sigma)$. We also consider \( q = \beta - p_{2r}^{-} \) in \eqref{25}, with \( \beta \geq p_{2r}^{-} \).
\begin{equation}  \label{32} 
\begin{aligned}
& \int_{\Omega} \left[ (\ue -N)^{+}+r \right]  ^{\beta-p_{2r  }^{-}}|  \grad  (\ue -N)^{+}| _{\fnf} ^{p_{2r } ^{-}} \eta  ^{p_{2r } ^{+}} \, \dwvi  \\
&\quad  \  \leq \cones _1   \int_{\Omega}  [(\ue -N)^{+} + r]  ^{p + \beta-p_{2r  }^{-}} |\deriv  \eta  |_{\fnfs }^{p} \, \dwvi    + \cones _2\int_{\Omega}  \left[ (\ue -N)^{+}+r \right]  ^{\beta-p_{2r  }^{-}} \eta  ^{p_{2r } ^{+}} \, \dwvi 
\end{aligned}
\end{equation}

Applying  inequality \eqref{27} to the function $[(\ue -N)^{+}+r  ]^{\beta/p_{2r  } ^-} \eta^{p_{2r  }^{+} / p_{2r  } ^-}$, we get
\begin{align*}
&\left( \fint _{B(\pvr _0, 2r ) } \left\{ [(\ue -N)^{+}+r  ]^{\beta / p_{2r  } ^-} \eta ^{p_{2r  }^{+} / p_{2r  }^{-}}    \right\}       ^{ p^- _{2r} 1_{\wghtz}} \, \dwvi    \right)^{\frac{1}{ 1_{\wghtz}} }\\
&\quad \ \leq (2 p^-_{2r} \conspoin  _{\wghtz}  r )^{p^- _{2r}}\fint_{B(\pvr _0, 2r   ) } \left| \grad  \left\{ [(\ue -N)^{+}+r  ]^{\beta / p_{2r  }^{-}} \eta^{p_{2r  }^{+} / p_{2r  } ^-}  \right\}  \right|_{\fnf} ^{p_{2r  }^{-}} \, \dwvi  \\
& \quad  \ \leq (2\conspoin  _{\wghtz}  r)^{p^- _{2r}} 2^{p^- _{2r} -1 } (1+\beta)^{p_{2r }^{-}} \left\{ \fint _{B(\pvr _0, 2r   ) } \left[ (\ue -N)^{+}+r  \right]^{\beta-p_{2r  }^{-}}|\grad (\ue -N)^{+}  | _{\fnf} ^{p_{2r  }^{-}} \eta^{p_{2r  }^{+}} \, \dwvi   \right.\\
&\quad \quad \ \left. + (p_{2r  }^{+})^{p_{2r  }^{-}}\fint _{B(\pvr _0, 2r   ) }  \left[ (\ue -N)^{+}+r   \right]^\beta \eta^{p_{2r  }^{+}-p_{2r  }^{-}}|\deriv  \eta|_{\fnfs }^{p_{2r  }^{-}} \, \dwvi   \right\} .
\end{align*}

By \eqref{32} and \eqref{34}, we can obtain
\begin{equation} \label{37}
\begin{aligned}
& \left\{ \conspoin _2 2^{-\cpoin _2}\fint _{B(\pvr  _0, \sigma  )}\left[(\ue -N)^{+}+r  \right]^{\beta 1_{\wghtz}}   \, \dwvi   \right\} ^{\frac{1}{1_{\wghtz}}} \\
& \quad \ \leq (2\conspoin  _{\wghtz}      r )^{p^- _{2r}} 2^{p^- _{2r} -1 } (1+\beta)^{p_{2r }^{-}} \left\{ \cones _1   \fint_{B(\pvr _0, 2r  )}  [(\ue -N)^{+} + r]  ^{p + \beta-p_{2r  }^{-}} |\deriv  \eta  |_{\fnfs }^{p} \, \dwvi  \right. \\
&\quad \quad \  + \cones _2  \fint_{B(\pvr _0, \rho  )}  \left[ (\ue -N)^{+}+r \right]  ^{\beta-p_{2r  }^{-}} \eta  ^{p_{2r } ^{+}} \, \dwvi  \\
&\quad \quad \ \left. + (p_{2r  }^{+})^{p_{2r  }^{-}}\fint _{B(\pvr _0, \rho   ) }  \left[ (\ue -N)^{+}+r   \right]^\beta |\deriv  \eta|_{\fnfs  }^{p_{2r  }^{-}} \, \dwvi   \right\}.
\end{aligned}
\end{equation}

$ $

{\it Steep 2.} Next, we estimate the right-hand side of \eqref{37}.

By Hölder's inequality, for $\gamma \in (1,1_{\wghtz})$,
\begin{equation} \label{35}
\fint _{B(\pvr _0, \rho  )}\left[(\ue -N)^{+}+r  \right]^\beta|\deriv  \eta| _{\fnfs}^{p_{2r  }^{-}} \, \dwvi  \leq   r^{-p_{2r  }^{-}}\left(\frac{ \consman _0 \consman _1 \rho}{\rho-\sigma}  \right)^{p_{2r  }^{-}}\left\{  \fint _{B(\pvr _0, \rho  )}\left[(\ue -N)^{+}+r  \right]^{\beta \gamma} \, \dwvi \right\} ^{\frac{1}{\gamma}}
\end{equation}
and
\begin{equation} \label{36}
\fint _{B(\pvr _0, \rho  )} \left[ (\ue -N)^{+}+r  \right]^{\beta-p_{2r  }^{-}} \, \dwvi  \leq  r^{-p_{2r  }^{-}} \left\{ \fint _{B(\pvr _0, \rho  )} \left[ (\ue -N)^{+}+r  \right]^{\beta \gamma} \, \dwvi  \right\}^{\frac{1}{\gamma}},
\end{equation}
since $r\leq (\ue -N)^{+}+r$.

By \eqref{38}, we have 
\begin{equation*} \label{39}
|\deriv  \eta|_{\fnfs }^{p} \leq r^{-p}\left( \frac{\consman _0 \consman _1 r}{\rho - \sigma}\right)^{p} \leq r^{-p}\left( \frac{\consman _0 \consman _1 \rho}{\rho - \sigma}\right)^{p} \leq \conspoin _4 r^{-p^- _{2r}}\left( \frac{\consman _0 \consman _1 \rho}{\rho - \sigma}\right)^{p}\leq c_2 r^{-p^- _{2r}}\left( \frac{ \rho}{\rho - \sigma}\right)^{p^+_{2r}},
\end{equation*}
where $c_2:= \conspoin _4 \max \{(\consman _0 \consman _1 )^{p^-_{2r}} , ( \consman _0 \consman _1) ^{p^+_{2r}}\}$.

Then,
\begin{align}
& \fint _{B(\pvr _0, 2r  )} \left[(\ue -N)^{+}+r  \right]^{\beta-p_{2r  }^{-}+p}|\deriv  \eta|_{\fnfs }^{p} \, \dwvi  \nonumber  \\
& \quad \ \leq \frac{c_2 r^{-p_{2r  }^{-}}}{\wghtvi  (B(\pvr _0, 2r  ))}  \left(\frac{\rho}{\rho-\sigma}  \right)^{p_{2r  }^{+}}   \int _{B(\pvr _0, \rho  )} \left[(\ue -N)^{+}+r  \right]^{\beta-p_{2r  }^{-}+p}  \, \dwvi   \nonumber  \\
& \quad \ \leq c_2 r^{-p_{2r  }^{-}}\left(\frac{\rho}{\rho-\sigma}  \right)^{p_{2r  }^{+}}  \left\{ \fint _{B(\pvr _0, 2r  )} \left[ (\ue -N)^{+}+r  \right]^{(p-p_{2r  }^{-}  ) \gamma ^{\prime}} \, \dwvi   \right\}^{\frac{1}{\gamma ^{\prime}}} \nonumber \\
& \quad \quad \ \cdot \left\{ \fint _{B(\pvr _0, \rho  )}\left[ (\ue -N)^{+}+r  \right]^{\beta \gamma} \, \dwvi  \right\} ^{\frac{1}{\gamma}} \nonumber\\
& \quad \ = c_2 c_3 r^{-p_{2r  }^{-}}\left(\frac{\rho}{\rho-\sigma}  \right)^{p_{2r  }^{+}}   \left\{ \fint _{B(\pvr _0, \rho  )}\left[ (\ue - N)^{+}+r  \right]^{\beta \gamma} \, \dwvi  \right\}^{\frac{1}{\gamma}}, \label{40}
\end{align}
where 
$$
c_3:=  \left\{ \fint _{B(\pvr _0, 2r  )} \left[ (\ue -N)^{+}+r  \right]^{(p-p_{2r  }^{-}  ) \gamma ^{\prime}} \, \dwvi   \right\}^{\frac{1}{\gamma ^{\prime}}}.
$$

From \eqref{37} - \eqref{40}, we can get
\begin{equation*} 
\begin{aligned}
& \left\{ \conspoin _2 2^{-\cpoin _2}\fint _{B(\pvr  _0, \sigma  )}\left[(\ue -N)^{+}+r  \right]^{\beta 1_{\wghtz}}   \, \dwvi   \right\} ^{\frac{1}{1_{\wghtz}}} \\
& \quad \ \leq (2 \conspoin  _{\wghtz}       )^{p^- _{2r}} 2^{p^- _{2r} -1 } \left\{ \cones _1   c _2 c_3   + \cones _2   + (\consman _0 \consman _1 p_{2r  }^{+})^{p_{2r  }^{-}} \right\}\\
& \quad \quad \ \cdot (1+\beta)^{p_{2r }^{-}}  \left( \frac{\rho}{\rho-\sigma}  \right)^{p_{2r  }^{+}}   \left\{ \fint _{B(\pvr _0, \rho  )}\left[ (\ue - N)^{+}+r  \right]^{\beta \gamma} \, \dwvi  \right\}^{\frac{1}{\gamma}}.
\end{aligned}
\end{equation*}

 Define 
 $$
 \Psi(f, q, D):=\left( \fint_D f^q \, \dwvi   \right)^{\frac{1}{q}}.
 $$

Consequently,
\begin{equation}\label{42}
\begin{aligned}
& \Psi( (\ue -N)^{+}+r, \beta 1_{\wghtz }, B(\pvr _0, \sigma  )  ) \\
& \quad \leq c_4^{\frac{1}{\beta}}\left(1+\beta \right)^{\frac{ p_{2r  }^{+}}{\beta}} \left(\frac{\rho }{\rho - \sigma }  \right)^{\frac{ p_{2r  }^{+}}{\beta}} \Psi((\ue -N)^{+}+r, \beta \gamma , B(\pvr _0, \rho  )  ) ,
\end{aligned}
\end{equation}
if $r\leq \sigma <\rho \leq 2r$, where 
$$
c_4:=  (\conspoin _2 ^{-1}2^{\cpoin _2})^{1/1_{\wghtz}} (2 \conspoin  _{\wghtz}       )^{p^- _{2r}} 2^{p^- _{2r} -1 } [ \cones _1   c _2 c_3   + \cones _2   + (\consman _0 \consman _1 p_{2r  }^{+})^{p_{2r  }^{-}} ] .
$$

Taking  $ r_j=\sigma+2^{-j} (\rho-\sigma )$, $ \xi _j=(1_{\wghtz}/\gamma  )^j \gamma p_{2r}^{-}$, and $\beta= (1_{\wghtz}/\gamma  )^j  p_{2r}^{-}$ in \eqref{42}, we have
\begin{equation*}
\begin{aligned}
& \Psi( (\ue -N)^{+}+r, \xi_{j+1}, B(\pvr _0, r_{j+1}  )  ) \\
& \quad \leq c_4^{\frac{\gamma}{\xi_j}}\left(1+\frac{\xi_j}{\gamma}  \right)^{\frac{\gamma p_{2r  }^{+}}{\xi_j}} \left(\frac{r_{j} }{r_{j} - r_{j+1} }  \right)^{\frac{\gamma p_{2r  }^{+}}{\xi_j}} \Psi((\ue -N)^{+}+r, \xi_j, B(\pvr _0, r_j  )  ) .
\end{aligned}
\end{equation*}

By iterating this inequality, we have
\begin{equation*}
\begin{aligned}
&\underset{  B(\pvr _0 , \sigma )}{\operatorname{ess} \sup } \ \left\{ (\ue -N)^{+}+r \right\}\\
&  \quad \ \leq \prod_{j=0}^{\infty}\left[c_4^{\gamma  /\xi_j}\left(1+\frac{\xi_j}{\gamma}\right)^{\gamma p_{2r}^{+} / \xi_j}\left( \frac{2^{j+1}\rho}{\rho-\sigma}\right)^{\gamma p_{2r}^{+} / \xi_j }\right] \Psi((\ue -N)^{+}+r, \gamma p_{2r}^{-}, B(\pvr _0 , \rho ))\\
&  \quad \ \leq c_4^{\sum _{j=0}^{\infty}\gamma  /\xi_j}   2^{\sum _{j=0}^{\infty} (j+1)\gamma p_{2r}^{+} / \xi_j }  \left( \frac{\rho}{\rho-\sigma}\right)^{\sum _{j=0}^{\infty} \gamma p_{2r}^{+} / \xi_j } \prod_{j=0}^{\infty }   \left(1+\frac{\xi_j}{\gamma}\right)^{\gamma p_{2r}^{+} / \xi_j}   \\
 & \quad \quad \ \cdot \Psi((\ue -N)^{+}+r, \gamma p_{2r}^{-}, B(\pvr _0 , \rho ))  .
\end{aligned}
\end{equation*}

We note that  
\begin{equation*}
\sum _{j=0}^{\infty} \frac{\gamma}{ \xi_j} = \frac{1_{\wghtz}}{ p_{2r} ^-(1_{\wghtz} - \gamma)}
\quad \text { and } \quad
\sum _{j=0}^{\infty} j\frac{\gamma}{ \xi_j} = \frac{1_{\wghtz} \gamma}{ p_{2r} ^-(1_{\wghtz} - \gamma)^2}.
\end{equation*}

Then,
\begin{equation*}
\begin{aligned}
& \prod_{j=0}^{\infty}\left(1+\frac{\xi_j}{\gamma}\right)^{\gamma p_{2r}^{+} / \xi_j} \leq 2^{\sum _{j=0}^{\infty}\gamma p_{2r}^{+} / \xi_j} \prod_{j=0}^{\infty} \left( \frac{\xi_j}{\gamma}\right)^{ \gamma p_{2r}^{+} / \xi_j}\\
& \quad \ = (2 p^- _{2r})^{\sum _{j=0}^{\infty}\gamma p_{2r}^{+} / \xi_j}\left( \frac{1_{\wghtz}}{\gamma}\right)^{\sum_{j=0}^{\infty} j\gamma p_{2r}^{+} / \xi_j}.
\end{aligned}
\end{equation*}

Which implies,
\begin{equation*}
\begin{aligned}
&\underset{  B(\pvr _0 , \sigma )}{\operatorname{ess} \sup } \ \left\{ (\ue -N)^{+}+r \right\}\\
&  \quad \ \leq (c_5 1_{\wghtz}/ \gamma)^{c_6}        c_7 ^{ \frac{ 1_{\wghtz}}{ 1_{\wghtz} - \gamma } }  (p_{2r} ^-)^{ \frac{p_{2r} ^+ 1_{\wghtz}}{p_{2r} ^- (1_{\wghtz} - \gamma)} }   \left( \frac{\rho}{\rho-\sigma}\right)^{\frac{p_{2r} ^+ 1_{\wghtz}}{p_{2r} ^- (1_{\wghtz} - \gamma)} }   \\
 & \quad \quad \ \cdot \Psi((\ue -N)^{+}+r, \gamma p_{2r}^{-}, B(\pvr _0 , \rho ))  ,
\end{aligned}
\end{equation*}
where $c _5:= [2(\consman _0 +1) \conspoin _2 ^{-1} (\conspoin _{\wghtz} +1)(\conspoin _4 +1)]^{\cpoin _2}$,  $c_6 = \frac{5p_{2r} ^+ 1_{\wghtz}}{p_{2r} ^-( 1_{\wghtz} - \gamma)} + \frac{p_{2r} ^+ 1_{\wghtz} \gamma}{p_{2r} ^- (1_{\wghtz} -\gamma)^2} $, and $c_7:= \cones _1 c_3  \max \{\consman _1 ^{p_{2r}^{-}} , \consman _1 ^{p_{2r}^{+}}\} + \cones _2 + ( \consman _1 p_{2r}^{+})^{p_{2r}^{-}} +1$. This completes the proof.\\

\ref{24} Performing the procedure of the proof of \ref{23}, we can easily obtain the result.
\end{proof}


\subsection{Lower bound of supersolutions}
In this subsection, we obtain a lower bound for a supersolution of \eqref{13} (Proposition \ref{84}). We also prove a reverse weak Hölder inequality (Lemma \ref{82}).
\begin{proposition} \label{84}
Let $\{\ue\} \subset \lwo  $ be a family  of functions, and let  $\uu \in \lwo $ be an element. Assume that \eqref{78} - \eqref{26} and \eqref{28} (of property \ref{94}) are verified. Let $r \leq \sigma<\rho \leq 2r \leq 2$,   $B(\pvr _0 , 2r) \subset \subset \psi (\tilde U)$, $ 1<\gamma<1_{\wghtz}$, and $\gamma \geq s_0 >0$.   Suppose that $\uu$ is a  supersolution of \eqref{13} with $\ue \geq N$ in $B(\pvr _0 , 2r)$. Then, we have 
\begin{equation*}
\begin{aligned}
& \left[\fint _{B(\pvr _0 , \rho )} (\ue -N+r)^{-s_0} \, \dwvi  \right] ^{\frac{1}{-s_0 }} \\
&  \quad \ \leq (\cones _3 1_{\wghtz}/ \gamma)^{\frac{\cones _{10}}{s_0}}        \cones _{14} ^{ \frac{ 1_{\wghtz} \gamma}{ s_0 (1_{\wghtz} - \gamma )} }     \left( \frac{\rho}{\rho-\sigma}\right)^{\frac{p_{2r} ^+ 1_{\wghtz} \gamma}{s_0 (1_{\wghtz} - \gamma)} }      \underset{  B(\pvr _0 , \sigma )}{\operatorname{ess} \inf } \ \left\{ \ue -N+r \right\} ,
\end{aligned}
\end{equation*}
where {\footnotesize $p_{2r } ^{-}:=\inf _{ B( \pvr _0, 2r )  } p$, $p_{2r } ^{+}:=\sup _{B( \pvr _0, 2r )  } p$,  $\cones _{3}= [2(\consman _0 +1) \conspoin _2 ^{-1} (\conspoin _{\wghtz} +1)(\conspoin _4 +1 )]^{\cpoin _2}$,  $ \cones _{10} = p^+_{2r}\left[\frac{ 5 1_{\wghtz} \gamma}{ 1_{\wghtz} - \gamma} + \frac{ 1_{\wghtz}  \gamma ^2}{(1_{\wghtz} -\gamma)^2}\right] $, 
\begin{equation*}
\footnotesize
\begin{gathered}
\cones _{11} :=  \frac{4 \consfii  p^{+} _{2r}}{ \consfi  (p^{-} _{2r} -1) p^{-} _{2r}}   \left[ \frac{2 \consfii  (p^{+}_{2r}-1) }{ \consfi (p^{-} _{2r} -1)} \right] ^{p^+_{2r}-1}  + \frac{ \vinf  4 p^+ _{2r}}{\consfi  (p^{-} _{2r} -1) p^- _{2r}} \\
  \cones _{12} := 4\vinf   \frac{ p^+_{2r}-1  }{ \consfi   p^+_{2r}}\left( \frac{p^+_{2r} }{p^{-} _{2r} -1}+1\right),\\
\cones _{13}:=  \left\{ \fint _{B(\pvr _0, 2r   )}  (\ue -N+r )^{(p-p_{2r   }^{-}  ) \gamma ^{\prime}} \, \dwvi   \right\}^{\frac{1}{\gamma ^{\prime}}},
\end{gathered}
\end{equation*}
and $\cones _{14}:= \cones _{11} \cones _{13} \max \{\consman _1 ^{p_{2r}^{-}} , \consman _1 ^{p_{2r}^{+}}\} + \cones _{12} + (\consman _1 p_{2r}^{+})^{p_{2r}^{-}} +1$.}
\end{proposition}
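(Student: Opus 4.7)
The plan is a Moser iteration with \emph{negative} exponents, in direct parallel to the proof of Proposition \ref{67}\ref{24}. Setting $w := \vue - N + r$, the assumption $\vue \geq N$ on $B(\pvr_0, 2r)$ ensures $w \geq r > 0$, so one can test directly with negative powers of $w$ without needing the distribution-function trick used in the proof of \eqref{22}. Since
\begin{equation*}
\underset{B(\pvr_0,\sigma)}{\operatorname{ess\,inf}} w \;=\; \lim_{q\to\infty}\left(\fint_{B(\pvr_0,\sigma)} w^{-q}\,\dwvi\right)^{-1/q},
\end{equation*}
the iteration will start from the exponent $-s_0$ and push the exponent of $w$ toward $-\infty$.

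For the Caccioppoli-type step, I would test the supersolution formulation \eqref{14} for $\vuu$ against the nonnegative function $\zeta = w^{-\beta}\eta^{p^+_{2r}}$, where $\eta$ is the cutoff from Lemma \ref{3} and $\beta > 0$. Transferring the estimate from $\vuu$ to $\vue$ via the pointwise bound $|\mathscr{A}(\cdot,\grad\vue) - \mathscr{A}(\cdot,\grad\vuu)|_{\fnf} \leq \vinf$ from \eqref{28}, and then invoking the structural conditions \ref{16} and \ref{21} with Young's inequality tuned to $\epsilon \sim (p^-_{2r}-1)/\consfii$ so as to absorb the $|\grad\vue|^p$ term on the left-hand side, should yield
\begin{equation*}
\int_\Omega w^{-\beta-1}|\grad w|^{p^-_{2r}}_{\fnf}\eta^{p^+_{2r}}\,\dwvi \;\leq\; \cones_{11}\int_\Omega w^{p-\beta-1}|\deriv\eta|^p_{\fnfs}\,\dwvi + \cones_{12}\int_\Omega w^{-\beta-1}\eta^{p^+_{2r}}\,\dwvi,
\end{equation*}
once $|\grad w|^p$ is replaced by $|\grad w|^{p^-_{2r}}$ via $|a|^{p^-_{2r}}\leq |a|^p + 1$. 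The factors $(p^-_{2r}-1)^{-1}$ that appear in $\cones_{11}$ and $\cones_{12}$ are precisely the trace of this Young-inequality choice.

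For the reverse-Hölder step, I would apply \eqref{27} to $v := w^{(p^-_{2r}-\beta-1)/p^-_{2r}}\eta^{p^+_{2r}/p^-_{2r}}$ at exponent $q = p^-_{2r}$, expand $|\grad v|^{p^-_{2r}}$, and substitute the Caccioppoli inequality. Hölder's inequality in exponent $\gamma'$ separates the variable-exponent factor as $\cones_{13}$, exactly as $c_3$ was produced in the proof of Proposition \ref{67}\ref{23}, while \eqref{38} together with Lemma \ref{3} supplies the factor $(\rho/(\rho-\sigma))^{p^+_{2r}}$. Because every exponent of $w$ encountered here is negative, raising the resulting inequality to the power $1/\beta$ reverses its direction, producing a reverse-Hölder estimate of the schematic form
\begin{equation*}
\Psi(w, -\beta\gamma, B(\pvr_0,\rho)) \;\leq\; \cones_{14}^{1/\beta}(1+\beta)^{p^+_{2r}/\beta}\left(\frac{\rho}{\rho-\sigma}\right)^{p^+_{2r}/\beta}\Psi(w, -(\beta+1-p^-_{2r})1_{\wghtz}, B(\pvr_0,\sigma)),
\end{equation*}
where $\Psi(f,q,D) := (\fint_D f^q\,\dwvi)^{1/q}$, and smaller values of $\Psi(w,-\cdot,\cdot)$ correspond to larger essinfs.

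For the iteration I would set $r_j := \sigma + 2^{-j}(\rho-\sigma)$ and $\xi_j := (1_{\wghtz}/\gamma)^{j}\,s_0/\gamma$, chain the reverse-Hölder estimate through the balls $B(\pvr_0, r_{j+1}) \subset B(\pvr_0, r_j)$, and let $j\to\infty$ to convert $\Psi(w,-\xi_j\gamma, B(\pvr_0, r_j))$ into $\underset{B(\pvr_0,\sigma)}{\operatorname{ess\,inf}}w$. Telescoping the resulting products reduces to the geometric series
\begin{equation*}
\sum_{j=0}^\infty \frac{1}{\xi_j} \;=\; \frac{1_{\wghtz}\gamma}{s_0(1_{\wghtz}-\gamma)}, \qquad \sum_{j=0}^\infty \frac{j}{\xi_j} \;=\; \frac{1_{\wghtz}\gamma^2}{s_0(1_{\wghtz}-\gamma)^2},
\end{equation*}
which, combined with the product $\prod_{j}(1+\xi_j)^{p^+_{2r}/\xi_j}$ (bounded as in the proof of Proposition \ref{67}\ref{23} by $(2p^-_{2r})^{\sum 1/\xi_j}(1_{\wghtz}/\gamma)^{\sum j/\xi_j}$), recover the announced prefactor involving $\cones_3$, $\cones_{10}$, and $\cones_{14}$. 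The main technical obstacle I anticipate is the careful bookkeeping of the Young-inequality and Hölder constants so that the final prefactors line up with the claimed $\cones_{11}, \cones_{12}, \cones_{13}, \cones_{14}$; conceptually no new analytical difficulty arises beyond tracking the negative-exponent sign flips.
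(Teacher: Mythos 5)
Your proposal is essentially the paper's own proof: the paper likewise tests the supersolution inequality for $\vuu$ with $\zeta=(\ue-N+r)^q\eta^{p^+_{\tr}}$, $q<0$, transfers to $\vue$ via \eqref{28}, tunes Young's inequality with $\epsilon=\consfi|q_0|/(2\consfii(p^+_{\tr}-1))$ at $q_0=1-p^-_{\tr}$, applies \eqref{27} and H\"older in $\gamma'$ to get the reverse-H\"older estimate \eqref{58}, and iterates with $\xi_j=-(1_{\wghtz}/\gamma)^j s_0$ using exactly the two geometric series you compute (the hypothesis $\gamma\ge s_0$ entering only to bound the product $\prod_j(1+|\xi_j|/\gamma)^{\gamma p^+_{2r}/|\xi_j|}$). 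The only bookkeeping point you leave implicit is the reparametrization $q=\beta-p^-_{2r}+1$, which the paper uses so that the Caccioppoli and Sobolev sides carry the same parameter $\beta$ and the iteration telescopes cleanly; your schematic reverse-H\"older line has mismatched exponents $-\beta\gamma$ versus $-(\beta+1-p^-_{2r})1_{\wghtz}$, but this is exactly the adjustment you flag as remaining bookkeeping.
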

\begin{proof} Fix $r>0$. Let $\zeta :=(\ue-N+r)^q \eta^{p_{\tr }^{+}}$, with $q<0$, and let $\eta \in C_0^{\infty} (B ( \pvr _0, \tr  ) )$ such that $0 \leq \eta \leq 1$. Note that $\varphi \in W_0^{1, p(\pvr)} (B (\pvr _0, \tr  ) ;  \wghtvi )$. 

We have
$$
\begin{aligned}
& \int _{\Omega} \mathscr{A}(\cdot , \grad  \ue ) \multg  \deriv  \zeta  \, \dwvi  = \int _{\Omega} g( \mathscr{A}(\cdot , \grad  \uu ) ,\grad  \zeta ) \, \dwvi    \\
& \quad \ + \int _{\Omega} \left[ \mathscr{A}(\cdot , \grad  \ue ) - \mathscr{A}(\cdot , \grad  \uu )\right] \multg  \deriv  \zeta  \, \dwvi  .
\end{aligned}
$$


{\it Step 1.} Since $\uu$ is a supersolution and
$$
\deriv  \zeta =q(\ue-N+r)^{q-1} \eta^{p_{\tr }^{+}} \deriv  \ue+p_{\tr }^{+}(\ue-N+r)^q \eta^{p_{\tr }^{+}-1} \deriv  \eta
$$
we have
$$
 \int_{\Omega} \mathscr{A}(\cdot, \grad  \ue) \multg  \left[  q(\ue-N+r)^{q-1} \eta^{ p_{\tr }^{+} } \deriv  \ue+p_{\tr }^{+}(\ue-N+r)^q \eta^{p_{\tr }^{+}-1} \deriv  \eta \right] \, \dwvi  \geq E ( \ell , N ,q),
$$
where
$$
\begin{aligned}
&E ( \ell , N ,q ) := \int _{\Omega} \left[ \mathscr{A}(\cdot , \grad  \ue ) - \mathscr{A}(\cdot , \grad  \uu ) \right] \multg   \left[ q(\ue-N+r)^{q-1} \eta^{p_{\tr }^{+}} \deriv  \ue+p_{\tr }^{+}(\ue-N+r)^q \eta^{p_{\tr }^{+}-1} \deriv  \eta \right] \, \dwvi .
\end{aligned}
$$

By \ref{16} and \ref{21}, we have
$$
\begin{aligned}
& \consfi  |q | \int_{\Omega}|\grad  \ue| _{\fnf}^{p}(\ue-N+r)^{q-1} \eta^{p_{\tr }^{+}} \, \dwvi  \\
& \quad \  \leq \consfii  p^{+} _{\tr}\int_{\Omega}  |\grad  \ue| _{\fnf} ^{p-1}(\ue-N+r)^q \eta^{p_{\tr }^{+}-1}|\deriv  \eta| _{\fnfs } \, \dwvi  -  E ( \ell , N , q)  \\
& \quad \ \leq \consfii  p^{+} _{\tr} \int_{\Omega} (\ue-N+r)^{q-1} \eta ^{p^+_{\tr}} \left[\frac{p^{+} _{\tr} -1}{p^{+} _{\tr}}\epsilon |\grad  \ue| _{\fnf}^{p} \right.\\
& \quad \quad \ \left. + \frac{1}{p^{-} _{\tr}} \max \{ \epsilon ^{-p^-_{\tr}+1} , \epsilon ^{-p^+_{\tr}+1} \}(\ue-N+r)^{p} \eta^{-p}|\deriv  \eta| _{\fnfs }^{p} \right]\, \dwvi  - E ( \ell , N , q) .
\end{aligned}
$$

Taking $\epsilon=\frac{\consfi  |q |}{2 \consfii  (p^{+}_{\tr}-1)}$, we have
\begin{equation} \label{49}
\begin{aligned}
& \frac{\consfi  |q |}{2} \int_{\Omega}|\grad  \ue|_{\fnf} ^{p}(\ue-N+r)^{q-1} \eta^{p_{\tr }^{+}} \, \dwvi  \\
& \quad  \ \leq \frac{\consfii  p^{+} _{\tr}}{p^{-} _{\tr}} \max \left\{ [2 \consfii  (p^{+}_{\tr}-1)/ \consfi  |q |] ^{p^-_{\tr}-1} , [2 \consfii  (p^{+}_{\tr}-1)/ \consfi  |q |] ^{p^+_{\tr}-1} \right\}\\
&  \quad \quad \ \cdot \int_{\Omega}(\ue-N+r)^{q-1+p} \eta^{p_{\tr }^{+}-p}|\deriv  \eta| _{\fnfs } ^{p} \, \dwvi  - E(\ell , N ,q) .
\end{aligned}
\end{equation}

$ $

{\it Steep 2.} We estimate the term $E(\ell , N , q)$:
\begin{align}
& |E(\ell , N , q)| \nonumber\\
& \quad \ \leq     \left|\int _{\Omega} \left[ \mathscr{A}(\cdot , \grad  \ue ) - \mathscr{A}(\cdot , \grad  \uu ) \right]\multg  \left[ q(\ue-N+r)^{q-1} \eta^{p_{\tr }^{+}} \deriv  \ue+p_{\tr }^{+}(\ue-N+r)^q \eta^{p_{\tr }^{+}-1} \deriv  \eta \right] \, \dwvi  \right|   \nonumber \\
& \quad \ \leq   |q| \int _{\Omega} \left| \mathscr{A} ( \cdot , \grad  \ue ) - \mathscr{A} ( \cdot , \grad  \uu) \right|_{\fnf}  (\ue-N+r)^{q-1} \eta ^{p^+ _{\tr }} | \deriv  \ue |_{\fnfs } \nonumber\\
& \quad \quad \  + p^+ _{\tr } \left| \mathscr{A} ( \cdot , \grad  \ue ) - \mathscr{A} ( \cdot , \grad  \uu) \right|  _{\fnf}  (\ue - N + r)^q \eta ^{p^+ _{\tr } - 1}  | \deriv  \eta |_{\fnfs } \, \dwvi  .  \label{51}
\end{align}

From \eqref{28}, 
\begin{equation*}
\left| \mathscr{A} ( \cdot , \grad  \ue ) - \mathscr{A} ( \cdot , \grad  \uu) \right|_{\fnf} \leq \vinf < \frac{\consfi    p_{\tr} ^-}{4} \quad \text { in } \quad \Omega.
\end{equation*}

We also have, in $B(\pvr _0 , \tr)$,
\begin{equation*} 
|\deriv \ue |_{\fnfs}=|\grad  \ue |_{\fnf} \leq \frac{1}{p^- _{\tr}} |\grad  \ue| _{\fnf}^p + \frac{p^+ _{\tr} -1}{p^+ _{\tr}}
\end{equation*}
and 
\begin{align*}
& (\ue - N +r )^{q} \eta ^{p^+_{\tr} -1}  |\deriv  \eta|_{\fnfs }\\
& \quad \ \leq (\ue - N +r )^{q-1} \left[ \frac{1}{p}  (\ue - N + r)^p |\deriv  \eta|_{\fnfs }^p + \frac{p-1}{p} \eta ^{\frac{p^+ _{\tr} -1}{p-1} p} \right]\\
& \quad \ \leq \frac{1}{p^- _{\tr}}  (\ue - N +r)^{q-1+p} |\deriv  \eta| _{\fnf} ^p + \frac{p^+_{\tr}-1}{p^+ _{\tr}} (\ue - N +r)^{q-1}  \eta ^{p^+ _{\tr}}.
\end{align*}

Consequently, from \eqref{51},
\begin{equation} \label{50}
\begin{aligned}
&  |E(\ell , N , q)| \\
& \quad \ \leq  \frac{\consfi |q|}{4} \int _{\Omega } (\ue - N +r )^{q-1} |\grad  \ue|_{\fnf} ^p \eta ^{p^+_{\tr}}\, \dwvi  + \frac{\vinf   |q| (p^+ _{\tr} -1)}{p^+ _{\tr}} \int _{\Omega} (\ue - N +r )^{q-1} \eta ^{p^+_{\tr}} \, \dwvi  \\
& \quad \quad \ + \frac{\vinf  p^+ _{\tr}}{p^- _{\tr}} \int _{\Omega }   (\ue - N +r )^{q-1+p} |\deriv  \eta| _{\fnfs }^p \, \dwvi  + \vinf  (p^+_{\tr}-1 ) \int _{\Omega } (\ue - N +r )^{q-1}  \eta ^{p^+ _{\tr}} \, \dwvi  .
\end{aligned}
\end{equation}

$ $

{\it Steep 3.} From \eqref{49} and \eqref{50},
\begin{equation} \label{52}
\begin{aligned}
& \frac{\consfi  |q |}{4} \int_{\Omega}|\grad  \ue| _{\fnf} ^{p}(\ue-N+r)^{q-1} \eta^{p_{\tr }^{+}} \, \dwvi  \\
&  \quad  \ \leq \cones _0 \int_{\Omega}(\ue-N+r)^{q-1+p} |\deriv  \eta| _{\fnfs }^{p} \, \dwvi  + \vinf   \frac{(p^+_{\tr}-1 )(p^+_{\tr} +|q|)}{p^+_{\tr}} \int _{\Omega } (\ue - N +r )^{q-1}  \eta ^{p^+ _{\tr}} \, \dwvi  ,
\end{aligned}
\end{equation}
where
$$
\cones _0:= \frac{\consfii  p^{+} _{\tr}}{p^{-} _{\tr}} \max \left\{ [2 \consfii  (p^{+}_{\tr}-1)/ \consfi  |q_0 |] ^{p^-_{\tr}-1} , [2 \consfii  (p^{+}_{\tr}-1)/ \consfi  |q_0 |] ^{p^+_{\tr}-1} \right\} + \frac{\vinf  p^+ _{\tr}}{p^- _{\tr}} 
$$
and $0>q_0\geq q$.

Taking $q_0=1-p^{-} _{\tr}$ and $q=\beta-p_{\tr }^{-}+1$ with $\beta <0$ in \eqref{52}, we have
\begin{equation} \label{59} 
\begin{aligned}
&  \int_{\Omega}|\grad  \ue|_{\fnf} ^{p}(\ue-N+r)^{\beta-p_{\tr }^{-}} \eta^{p_{\tr }^{+}} \, \dwvi  \\
&  \quad  \ \leq \bfc _1 \int_{\Omega}(\ue-N+r)^{\beta-p_{\tr }^{-} +p} |\deriv  \eta|_{\fnfs }^{p} \, \dwvi  + \bfc _2 \int _{\Omega } (\ue - N +r )^{\beta-p_{\tr }^{-}}  \eta ^{p^+ _{\tr}} \, \dwvi  ,
\end{aligned}
\end{equation}
where 
\begin{gather*}
\bfc _1 :=  \frac{4 \consfii  p^{+} _{\tr}}{ \consfi  (p^{-} _{\tr} -1) p^{-} _{\tr}} \max \left\{ \left[\frac{2 \consfii  (p^{+}_{\tr}-1)}{ \consfi  (p^{-} _{\tr} -1)} \right] ^{p^-_{\tr}-1} , \left[ \frac{2 \consfii  (p^{+}_{\tr}-1)}{ \consfi  (p^{-} _{\tr} -1) } \right] ^{p^+_{\tr}-1} \right\} + \frac{ \vinf  4 p^+ _{\tr}}{\consfi  (p^{-} _{\tr} -1) p^- _{\tr}} \\
  \bfc _2 := 4 \vinf   \frac{  p^+_{\tr}-1  }{ \consfi   p^+_{\tr}} \left[ \frac{p^{+} _{\tr}}{p^{-} _{\tr} -1} +1\right].
\end{gather*}

$ $

{\it Steep 4.} Let $r \leq \sigma<\rho \leq 2 r$. Next,  take $\tr = 2r$ in \eqref{59} and  consider the function $\eta \in C^{\infty} _0 (B(\pvr _0 , \rho))$ from Lemma \ref{3}, where  $\eta$ satisfies:   $0 \leq \eta \leq 1$, $ \eta=1$ in  $B ( \pvr _0, \sigma )$, and $|\deriv  \eta| _{\fnfs } \leq \consman _0 \consman _1/(\rho-\sigma)$.

Applying  inequality \eqref{27} to the function $(\ue -N +r )^{\beta/p_{2r  } ^-} \eta^{p_{2r  }^{+} / p_{2r  } ^-}$, we have
\begin{align*}
&\left\{ \fint _{B(\pvr _0, 2r  ) } \left[ (\ue -N + r  )^{\beta / p_{2r   } ^-} \eta ^{p_{2r   }^{+} / p_{2r   }^{-}}    \right]       ^{ p^- _{2r } 1_{\wghtz}} \, \dwvi    \right\} ^{\frac{1}{ 1_{\wghtz}} }\\
&\quad \ \leq (2 p^-_{2r } \conspoin  _{\wghtz}  r )^{p^- _{2r }}\fint_{B(\pvr _0, 2r    ) } \left| \grad  \left[(\ue -N+r  )^{\beta / p_{2r   }^{-}} \eta^{p_{2r   }^{+} / p_{2r   } ^-}  \right]  \right|_{\fnf} ^{p_{2r   }^{-}} \, \dwvi  \\
& \quad  \ \leq (2\conspoin  _{\wghtz}  r)^{p^- _{2r }} 2^{p^- _{2r } -1 } (1+|\beta|)^{p_{2r  }^{-}} \left\{ \fint _{B(\pvr _0, 2r    ) } \left( \ue -N+r  \right)^{\beta-p_{2r   }^{-}}|\grad  \ue   | _{\fnf} ^{p_{2r   }^{-}} \eta^{p_{2r   }^{+}} \, \dwvi   \right.\\
&\quad \quad \ \left. + (p_{2r   }^{+})^{p_{2r   }^{-}}\fint _{B(\pvr _0, 2r    ) }  \left( \ue -N +r   \right)^\beta \eta^{p_{2r   }^{+}-p_{2r   }^{-}}|\deriv  \eta|_{\fnfs }^{p_{2r   }^{-}} \, \dwvi   \right\} .
\end{align*}

By \eqref{59} and \eqref{34}, we can obtain
\begin{equation} \label{54}
\begin{aligned}
& \left\{ \conspoin _2 2^{-\cpoin _2}\fint _{B(\pvr  _0, \sigma  )}\left(\ue -N +r  \right)^{\beta 1_{\wghtz}}   \, \dwvi   \right\} ^{\frac{1}{1_{\wghtz}}} \\
& \quad \ \leq (2\conspoin  _{\wghtz}      r )^{p^- _{2r }} 2^{p^- _{2r } -1 } (1+|\beta |)^{p_{2r  }^{-}} \left\{ \bfc _1   \fint_{B(\pvr _0, 2r   )}  (\ue -N + r)  ^{p + \beta-p_{2r   }^{-}} |\deriv  \eta  |_{\fnfs }^{p} \, \dwvi  \right. \\
&\quad \quad \  + \bfc _2  \fint_{B(\pvr _0, \rho  )}   (\ue -N +r )  ^{\beta-p_{2r   }^{-}} \eta  ^{p_{2r  } ^{+}} \, \dwvi  \\
&\quad \quad \ \left. + (p_{2r   }^{+})^{p_{2r   }^{-}}\fint _{B(\pvr _0, \rho   ) }  (\ue -N +r )^\beta |\deriv  \eta|_{\fnfs }^{p_{2r   }^{-}} \, \dwvi   \right\}.
\end{aligned}
\end{equation}

$ $

{\it Steep 2.} Next, we estimate the right-hand side of \eqref{54}.

By Hölder's inequality, for $\gamma \in (1,1_{\wghtz})$,
\begin{equation} \label{55}
\fint _{B(\pvr _0, \rho  )}  (\ue -N+r ) ^\beta|\deriv  \eta|_{\fnfs }^{p_{2r   }^{-}} \, \dwvi  \leq   r^{-p_{2r   }^{-}}\left(\frac{ \consman _0\consman _1 \rho}{\rho-\sigma}  \right)^{p_{2r   }^{-}}\left\{  \fint _{B(\pvr _0, \rho  )} (\ue -N +r  )^{\beta \gamma} \, \dwvi \right\} ^{\frac{1}{\gamma}}
\end{equation}
and
\begin{equation} \label{56}
\fint _{B(\pvr _0, \rho  )}  (\ue -N+r  )^{\beta-p_{2r   }^{-}} \, \dwvi  \leq  r^{-p_{2r   }^{-}} \left\{ \fint _{B(\pvr _0, \rho  )} (\ue -N +r  )^{\beta \gamma} \, \dwvi  \right\}^{\frac{1}{\gamma}},
\end{equation}
since $r\leq \ue -N+r$.

By \eqref{38}, we have 
\begin{equation*} \label{39}
|\deriv  \eta|_{\fnfs }^{p} \leq r^{-p}\left( \frac{\consman _0 \consman _1 r}{\rho - \sigma}\right)^{p} \leq r^{-p}\left( \frac{\consman _0 \consman _1 \rho}{\rho - \sigma}\right)^{p} \leq \conspoin _4 r^{-p^- _{2r }}\left( \frac{\consman _0 \consman _1 \rho}{\rho - \sigma}\right)^{p}\leq \bfc _3 r^{-p^- _{2r }}\left( \frac{ \rho}{\rho - \sigma}\right)^{p^+_{2r }},
\end{equation*}
where $\bfc _3:= \conspoin _4 \max \{(\consman _0 \consman _1) ^{p^-_{2r }} , (\consman _0 \consman _1) ^{p^+_{2r }}\}$.

Then,
\begin{align}
& \fint _{B(\pvr _0, 2r   )} (\ue -N+r  )^{\beta-p_{2r   }^{-}+p}|\deriv  \eta|_{\fnfs }^{p} \, \dwvi  \nonumber \\
& \quad \ \leq \frac{\bfc _3 r^{-p_{2r   }^{-}}}{\wghtvi  (B(\pvr _0, 2r   ))}  \left(\frac{\rho}{\rho-\sigma}  \right)^{p_{2r   }^{+}}   \int _{B(\pvr _0, \rho  )} (\ue -N+r  )^{\beta-p_{2r   }^{-}+p}  \, \dwvi   \nonumber  \\
& \quad \ \leq \bfc _3 r^{-p_{2r   }^{-}}\left(\frac{\rho}{\rho-\sigma}  \right)^{p_{2r   }^{+}}  \left\{ \fint _{B(\pvr _0, 2r   )} (\ue -N+r  )^{(p-p_{2r   }^{-}  ) \gamma ^{\prime}} \, \dwvi   \right\}^{\frac{1}{\gamma ^{\prime}}} \nonumber \\
& \quad \quad \ \cdot \left\{ \fint _{B(\pvr _0, \rho  )} (\ue -N+r  ) ^{\beta \gamma} \, \dwvi  \right\} ^{\frac{1}{\gamma}} \nonumber\\
& \quad \ \leq \bfc _3 \bfc _4 r^{-p_{2r   }^{-}}\left(\frac{\rho}{\rho-\sigma}  \right)^{p_{2r   }^{+}}   \left\{ \fint _{B(\pvr _0, \rho  )}  (\ue - N +r )^{\beta \gamma} \, \dwvi  \right\}^{\frac{1}{\gamma}}, \label{57}
\end{align}
where 
$$
\bfc _4:=  \left\{ \fint _{B(\pvr _0, 2r   )}  (\ue -N+r )^{(p-p_{2r   }^{-}  ) \gamma ^{\prime}} \, \dwvi   \right\}^{\frac{1}{\gamma ^{\prime}}}.
$$

From \eqref{54} - \eqref{57}, we can get
\begin{equation*} 
\begin{aligned}
& \left[ \conspoin _2 2^{-\cpoin _2}\fint _{B(\pvr  _0, \sigma  )} (\ue -N +r)  ^{\beta 1_{\wghtz}}   \, \dwvi   \right] ^{\frac{1}{1_{\wghtz}}} \\
& \quad \ \leq (2 \conspoin  _{\wghtz}       )^{p^- _{2r }} 2^{p^- _{2r } -1 } \left[ \bfc _1   \bfc _3 \bfc _4   + \bfc _2   + (\consman _0 \consman _1 p_{2r   }^{+})^{p_{2r   }^{-}} \right]\\
& \quad \quad \ \cdot (1+|\beta|)^{p_{2r  }^{-}}  \left( \frac{\rho}{\rho-\sigma}  \right)^{p_{2r   }^{+}}   \left[ \fint _{B(\pvr _0, \rho  )}  (\ue - N +r  )^{\beta \gamma} \, \dwvi  \right]^{\frac{1}{\gamma}}.
\end{aligned}
\end{equation*}

 
Then
\begin{equation} \label{58}
\begin{aligned}
&\left[ \fint _{B(\pvr _0, \rho  )}  (\ue - N +r  )^{\beta \gamma} \, \dwvi  \right]^{\frac{1}{\beta \gamma}} \\
& \quad \ \leq \bfc _5  ^{\frac{1}{|\beta|}} (1+|\beta|)^{\frac{p_{2r  }^{-} }{|\beta|}}  \left( \frac{\rho}{\rho-\sigma}  \right)^{\frac{p_{2r  }^{-} }{|\beta|}}    \left[ \fint _{B(\pvr  _0, \sigma  )} (\ue -N +r)  ^{\beta 1_{\wghtz}}   \, \dwvi   \right] ^{\frac{1}{\beta 1_{ \wghtz}}},
\end{aligned}
\end{equation}
where
$$
\bfc _5 := (\conspoin _2 ^{-1} 2^{\cpoin _2} )^{1/1_{\wghtz}} (2 \conspoin  _{\wghtz}       )^{p^- _{2r }} 2^{p^- _{2r } -1 } \left[ \bfc _1   \bfc _3 \bfc _4   + \bfc _2   + (\consman _0 \consman _1 p_{2r   }^{+})^{p_{2r   }^{-}} \right].
$$

Substituting $r_j=\sigma+ 2^{-j}(\rho-\sigma)$,  $ \xi_j=- (1_{\wghtz } / \gamma )^j s_0$, and $\beta = -(1_{\wghtz} / \gamma)^j (s_0 / \gamma)$ in \eqref{58}, we obtain the iterative inequality
$$
\begin{aligned}
& \Psi (\ue-N+r, \xi_j, B (\pvr _0, r_j ) ) \\
& \quad  \ \leq \bfc _5 ^{\frac{\gamma}{ |\xi_j |}} \left(1+\frac{ |\xi_j |}{\gamma} \right)^{\frac{\gamma p_{2r }^{+}}{ |\xi_j |}} \left(\frac{r_j}{r_j-r_{j+1}} \right)^{\frac{\gamma p_{2r }^{+}}{ |\xi_j |}} \Psi (\ue-N+r, \xi_{j+1}, B (\pvr _0, r_{j+1} ) ),
\end{aligned}
$$
where $\Psi (f,q,D)=( \fint _D f^q \, \dwvi  )^{1/q}$.

Hence,
\begin{equation*}
\begin{aligned}
& \Psi(\ue -N+r, -s_0, B(\pvr _0 , \rho ))   \\
&  \quad \ \leq \prod_{j=0}^{\infty}\left[\bfc_5 ^{\gamma  /|\xi_j | }\left(1+\frac{ |\xi_j| }{\gamma}\right)^{\gamma p_{2r}^{+} / | \xi_j |}\left( \frac{2^{j+1}\rho}{\rho-\sigma}\right)^{\gamma p_{2r}^{+} / |\xi_j |}\right] \underset{  B(\pvr _0 , \sigma )}{\operatorname{ess} \inf } \ \left\{ \ue -N+r \right\} \\
&  \quad \ \leq \bfc _5 ^{\sum _{i=0}^{\infty}\gamma  /|\xi_j|}   2^{\sum _{j=0}^{\infty} (j+1)\gamma p_{2r}^{+} / | \xi_j |}  \left( \frac{\rho}{\rho-\sigma}\right)^{\sum _{i=0}^{\infty} \gamma p_{2r}^{+} / | \xi_j | } \prod_{j=0}^{\infty }   \left(1+\frac{| \xi_j |}{\gamma}\right)^{\gamma p_{2r}^{+} / | \xi_j |}   \\
 & \quad \quad \ \cdot  \underset{  B(\pvr _0 , \sigma )}{\operatorname{ess} \inf } \ \left\{ \ue -N+r \right\}.
\end{aligned}
\end{equation*}

We have
\begin{equation*}
\sum _{j=0}^{\infty} \frac{\gamma}{ |\xi_j|} = \frac{ 1_{\wghtz} \gamma }{ s_0 (1_{\wghtz} - \gamma)}
\quad \text { and } \quad
\sum _{j=0}^{\infty} j\frac{\gamma}{ |\xi_j|} = \frac{1_{\wghtz} \gamma ^2}{ s_0 (1_{\wghtz} - \gamma)^2}.
\end{equation*}

Let $j_0 \in \mathbb{N}$ such that 
$$
\frac{|\xi _j|}{\gamma}\geq 1 \text { if } j\geq j_0 +1 \quad \text { and } \quad  \frac{|\xi _j|}{\gamma} < 1  \text { if } j\leq j_0 .
$$

Then,
\begin{equation*}
\begin{aligned}
& \prod_{j=0}^{\infty}\left(1+\frac{|\xi_j|}{\gamma}\right)^{\gamma p_{2r}^{+} / |\xi_j|} \leq  \prod_{j=0}^{j_0} 2^{ \gamma p_{2r}^{+} / |\xi_j|}     \prod_{j=j_0 +1}^{\infty} \left( 2 \frac{|\xi_j|}{\gamma}\right)^{ \gamma p_{2r}^{+} / |\xi_j|} \\
& \quad \ \leq  \prod_{j=0}^{j_0} 2^{ \gamma p_{2r}^{+} / |\xi_j|}     \prod_{j=j_0 +1}^{\infty} \left( 2 \frac{|\xi_j|}{\gamma}\right)^{ \gamma p_{2r}^{+} / |\xi_j|}\\ 
& \quad \  \leq 2^{\sum _{j=0}^{\infty}\gamma p_{2r}^{+} / |\xi_j|} \prod_{j=j_0 +1}^{\infty} \left[\left( \frac{1_{\wghtz}}{\gamma} \right) ^j \frac{s_0}{\gamma}\right] ^{  \gamma p_{2r}^{+} / |\xi_j|}\\
& \quad \ \leq 2^{\sum _{j=0}^{\infty}\gamma p_{2r}^{+} / |\xi_j|}\left( \frac{1_{\wghtz}}{\gamma}\right)^{\sum_{j=0}^{\infty} j\gamma p_{2r}^{+} / |\xi_j|},
\end{aligned}
\end{equation*}
since $\gamma \geq s_0$.

This implies,
\begin{equation*}
\begin{aligned}
& \Psi(\ue -N+r, -s_0 , B(\pvr _0 , \rho )) \\
&  \quad \ \leq (\bfc _6 1_{\wghtz}/ \gamma)^{\frac{\bfc _7}{s_0}}        \bfc _{8} ^{ \frac{ 1_{\wghtz} \gamma}{ s_0 (1_{\wghtz} - \gamma )} }    \left( \frac{\rho}{\rho-\sigma}\right)^{\frac{p_{2r} ^+ 1_{\wghtz} \gamma}{s_0 (1_{\wghtz} - \gamma)} }   \\
 & \quad \quad \ \cdot   \underset{  B(\pvr _0 , \sigma )}{\operatorname{ess} \inf } \ \left\{ \ue -N+r \right\} ,
\end{aligned}
\end{equation*}
where $\bfc _6:= [2(\consman _0 +1) \conspoin _2 ^{-1} (\conspoin _{\wghtz} +1)(\conspoin _4 +1)]^{\cpoin _2} $,  $\bfc_7 = p^+_{2r}\left[\frac{ 5 1_{\wghtz} \gamma}{ 1_{\wghtz} - \gamma} + \frac{ 1_{\wghtz}  \gamma ^2}{(1_{\wghtz} -\gamma)^2}\right]$, and $\bfc _{8}:= \bfc _1 \bfc _4 \max \{\consman _1 ^{p_{2r}^{-}} , \consman _1 ^{p_{2r}^{+}}\} + \bfc _2 + (\consman _1 p_{2r}^{+})^{p_{2r}^{-}}  +1$. This completes the proof. \end{proof}

Before proving the weak reverse Hölder inequality (Lemma \ref{82}), we will first prove the following 
\begin{lemma}   Let $\{\ue\} \subset \lwo  $ be a family  of functions, and let  $\uu \in \lwo $ be an element. Assume that \eqref{28} (of property \ref{94}) holds.      Suppose that $\uu$ is a  supersolution of \eqref{13} and $\ue \geq 0$.  Let $W$ be a measurable subset of $B(\pvr _0 , \tr)\subset \psi (\tilde U)$, and let $\eta \in C_0^{\infty}(B(\pvr _0 , \tr))$ such that $0 \leq \eta \leq 1$. Let $\gamma <0$. Then, we have
\begin{equation}\label{63}
\begin{aligned}
& \int_{W}|\grad  \ue|_{\fnf}^{p^-_W} \eta^{p^+_{\tr}} \uu _{\ell ,\alpha}^{\gamma-1} \, \dwvi  \\
& \quad \ \leq c_1 \int_{\Omega} \uu _{\ell , \alpha}^{\gamma+p-1}|\deriv  \eta| _{\fnfs }^{p}  \, \dwvi   + c_2 \int _{\Omega}   \eta^{p^+_{\tr}} \uu _{\ell , \alpha}^{\gamma-1} \, \dwvi ,
\end{aligned}
\end{equation}
where   $\uu _{\ell , \alpha}:= \ue + \alpha$,  $\alpha \geq 0$, \footnotesize{ $p_{W } ^{-}:=\inf _{ W  } p$,   $p_{\tr } ^{-}:=\inf _{ B( \pvr _0, \tr )  } p$, $p_{\tr } ^{+}:=\sup _{B( \pvr _0, \tr )  } p$,
\begin{equation*}
c_1 :=  \frac{4\consfii  p^+_{\tr}}{ |\gamma|\consfi  p^-_{\tr}}  \max \left\{ \left[2\consfii  (p^+_{\tr}-1) / (|\gamma|\consfi ) \right] ^{p^-_{\tr}-1} , \left[2 \consfii (p^+_{\tr} -1)/ (|\gamma|\consfi ) \right] ^{p^+_{\tr} - 1} \right\} + \vinf \frac{4\consfii  p^+_{\tr}}{|\gamma|\consfi p^- _{\tr}} 
\end{equation*}
and
\begin{equation*}
c_2 := \vinf\frac{4(p^+_{\tr}-1)(p^+_{\tr} +|\gamma|)}{\consfi p^+_{\tr}|\gamma |} +1.
\end{equation*}}
\end{lemma}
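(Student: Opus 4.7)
The plan is to repeat almost verbatim Steps 1--3 of the proof of Proposition \ref{84}, with three cosmetic substitutions: dropping the shift $-N$, replacing the constant $r$ by the parameter $\alpha$ (so that $\ue-N+r$ becomes $\uu_{\ell,\alpha}=\ue+\alpha$), and at the very end trading the ambient power $p$ of $|\grad\ue|_\fnf$ for the constant exponent $p^-_W$ on $W$. The natural test function for the supersolution inequality for $\uu$ is
$$
\zeta := \eta^{p^+_{\tr}}\uu_{\ell,\alpha}^{\gamma},
$$
which is nonnegative and lies in $W^{1,p(\pvr)}_0(B(\pvr_0,\tr);\wghtvi)$ when $\alpha>0$. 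The case $\alpha=0$ would be handled by proving the estimate for every $\alpha>0$ with constants independent of $\alpha$ and then passing to the limit $\alpha\searrow 0$ by monotone convergence.

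With $\zeta$ in hand, I would write
$$
\int_\Omega \mathscr{A}(\cdot,\grad\ue)\multg\deriv\zeta\,\dwvi = \int_\Omega \mathscr{A}(\cdot,\grad\uu)\multg\deriv\zeta\,\dwvi + \hat E(\ell,\gamma),
$$
where $\hat E(\ell,\gamma):=\int_\Omega[\mathscr{A}(\cdot,\grad\ue)-\mathscr{A}(\cdot,\grad\uu)]\multg\deriv\zeta\,\dwvi$. Since $\uu$ is a supersolution and $\zeta\ge 0$, the first integral on the right is nonnegative, so $\int_\Omega \mathscr{A}(\cdot,\grad\ue)\multg\deriv\zeta\,\dwvi \ge \hat E(\ell,\gamma)$. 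Expanding $\deriv\zeta = \gamma\eta^{p^+_{\tr}}\uu_{\ell,\alpha}^{\gamma-1}\deriv\ue + p^+_{\tr}\eta^{p^+_{\tr}-1}\uu_{\ell,\alpha}^{\gamma}\deriv\eta$, using \ref{16}, \ref{21}, the sign of $\gamma<0$, and Young's inequality with $\varepsilon=\consfi|\gamma|/[2\consfii(p^+_{\tr}-1)]$ on the term containing $\deriv\eta$ (exactly as in the derivation of \eqref{49}) then yields
$$
\tfrac{\consfi|\gamma|}{2}\int_\Omega|\grad\ue|^p_\fnf\,\uu_{\ell,\alpha}^{\gamma-1}\,\eta^{p^+_{\tr}}\,\dwvi \le C_0\int_\Omega \uu_{\ell,\alpha}^{\gamma-1+p}\,|\deriv\eta|^p_{\fnfs}\,\dwvi - \hat E(\ell,\gamma),
$$
with $C_0$ of the type of the prefactor $\cones_0$ appearing in Step 1 of the proof of Proposition \ref{84}.

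Next, I would bound $|\hat E(\ell,\gamma)|$ exactly as in Step 2 of the proof of Proposition \ref{84}: apply $|\mathscr{A}(\cdot,\grad\ue)-\mathscr{A}(\cdot,\grad\uu)|_\fnf\le \vinf$ from \eqref{28}, split the resulting integral into a gradient piece and a $|\deriv\eta|$ piece, and use Young's inequality once more to absorb $\tfrac{\consfi|\gamma|}{4}\int|\grad\ue|^p_\fnf\uu_{\ell,\alpha}^{\gamma-1}\eta^{p^+_{\tr}}\,\dwvi$ into the left-hand side. This produces an inequality of the form \eqref{63} but with $|\grad\ue|^p_\fnf$ on $\Omega$ in place of $|\grad\ue|^{p^-_W}_\fnf$ on $W$. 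To conclude, I would restrict the LHS to $W$ and invoke the pointwise bound $|a|^{p^-_W}\le|a|^p+1$, valid on $W$ since $p\ge p^-_W$ there; the resulting extra term $\int_W\uu_{\ell,\alpha}^{\gamma-1}\eta^{p^+_{\tr}}\,\dwvi$ is precisely what accounts for the ``$+1$'' in $c_2$. The main obstacle is purely bookkeeping: tracking the precise dependence on $|\gamma|$, $p^-_{\tr}$, $p^+_{\tr}$, $\consfi$, $\consfii$, and $\vinf$ through the two applications of Young's inequality so that the coefficients on the right match the stated $c_1$ and $c_2$, together with the technical point that $\zeta$ is only admissible for $\alpha>0$, which necessitates the limiting argument mentioned above.
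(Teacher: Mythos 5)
Your proposal matches the paper's proof essentially line for line: the same test function $\zeta=\eta^{p^+_{\tr}}\uu_{\ell,\alpha}^{\gamma}$, the same decomposition into the supersolution term plus the error term $E(\ell)$ controlled via \eqref{28}, the same two applications of Young's inequality with $\epsilon=\consfi|\gamma|/[2\consfii(p^+_{\tr}-1)]$ and absorption of $\tfrac{\consfi|\gamma|}{4}\int|\grad\ue|_{\fnf}^{p}\uu_{\ell,\alpha}^{\gamma-1}\eta^{p^+_{\tr}}\,\dwvi$, and the same final passage from $|\grad\ue|_{\fnf}^{p}$ to $|\grad\ue|_{\fnf}^{p^-_W}$ on $W$ via $|a|^{p^-_W}\le|a|^{p}+1$, which produces the ``$+1$'' in $c_2$. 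Your extra remark on handling $\alpha=0$ by a limiting argument is a reasonable refinement the paper does not bother with (and is moot in the application, where $\alpha\ge r>0$); otherwise the argument is the same.
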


\begin{proof}  We have,
$$
\begin{aligned}
& \int _{\Omega} \mathscr{A}(\cdot , \grad  \ue ) \multg  \deriv  \zeta  \, \dwvi  = \int _{\Omega}  \mathscr{A}(\cdot , \grad  \uu ) \multg  \deriv  \zeta  \, \dwvi    \\
& \quad \ + \int _{\Omega} \left[ \mathscr{A}(\cdot , \grad  \ue ) - \mathscr{A}(\cdot , \grad  \uu ) \right] \multg  \deriv  \zeta  \, \dwvi  .
\end{aligned}
$$

Let $\zeta :=\uu _{\ell , \alpha}^\gamma \eta^{p^+_{\tr}}$. Since $\uu$ is a supersolution and
$$
\deriv  \zeta =\gamma \uu _{\ell ,\alpha}^{\gamma-1} \eta^{p^+_{\tr}} \deriv  \uu+ p^+_{\tr} \uu _{\ell , \alpha}^\gamma  \eta^{p^+_{\tr}-1} \deriv  \eta,
$$
we have
$$
 \int_{\Omega}  \mathscr{A}(\cdot , \grad  \ue) \multg  \left[ \gamma \uu _{\ell ,\alpha}^{\gamma-1} \eta^{p^+_{\tr}} \deriv  \uu+ p^+_{\tr} \uu _{\ell , \alpha}^\gamma  \eta^{p^+_{\tr}-1} \deriv  \eta\right] \, \dwvi  \geq E ( \ell  ),
$$
where
$$
\begin{aligned}
&E ( \ell  ) := \int _{\Omega} \left[ \mathscr{A}(\cdot , \grad  \ue ) - \mathscr{A}(\cdot , \grad  \uu ) \right]\multg  \left[  \gamma \uu _{\ell ,\alpha}^{\gamma-1} \eta^{p^+_{\tr}} \deriv  \ue + p^+_{\tr} \uu _{\ell , \alpha}^\gamma  \eta^{p^+_{\tr}-1} \deriv  \eta \right] \, \dwvi .
\end{aligned}
$$

From \ref{16} and \ref{21}, and since $\gamma$ is a negative,
\begin{equation}\label{60}
|\gamma|\consfi   \int_{\Omega}|\grad  \ue |_{\fnf}^{p} \eta^{p^+_{\tr}} \uu _{\ell ,\alpha}^{\gamma-1} \, \dwvi  \leq p^+_{\tr} \consfii \int_{\Omega}  \uu _{\ell , \alpha}^\gamma \eta^{p^+_{\tr}-1} |\grad  \ue |_{\fnf}^{p-1} |\deriv  \eta |_{\fnfs }  \, \dwvi  - E(\ell).
\end{equation}

Next, we estimate  the  first term of the right-hand side of \eqref{60}. Using the Young's inequality,
\begin{equation} \label{61}
\begin{aligned}
& p^+_{\tr} \int_{\Omega} |\grad  \ue| _{\fnf}^{p-1} \uu _{\ell ,\alpha}^\gamma \eta^{p^+_{\tr}-1}|\deriv  \eta| _{\fnfs } \, \dwvi  \\
& \quad \ \leq  p^+_{\tr} \int_{\Omega} \frac{\epsilon^{-p+1}}{p} \left[  \uu _{\ell ,\alpha}^{(\gamma+p-1) / p}|\deriv  \eta| _{\fnfs } \eta^{p^+_{\tr} -1 -p^+_{\tr}(p-1) / p} \right]^{p} \\
& \quad \quad \ +\frac{\epsilon(p-1)}{p} \left[  |\grad  \ue| _{\fnf} ^{p-1} \eta^{p^+_{\tr} (p-1) / p} \uu _{\ell , \alpha}^{\gamma-(\gamma+p-1) / p} \right] ^{p/(p-1)} \, \dwvi  \\
&\quad \ \leq  \frac{p^+_{\tr}}{p^-_{\tr}}  \max \{ \epsilon ^{-p^-_{\tr}+1} , \epsilon ^{-p^+_{\tr} +1}\} \int_{\Omega} \uu _{\ell , \alpha}^{\gamma+p-1}|\deriv  \eta|_{\fnfs }^{p} \eta^{p^+_{\tr}-p} \, \dwvi  \\
& \quad \quad \ + (p^+_{\tr}-1) \epsilon \int_{\Omega}|\grad  \uu|_{\fnf}^{p} \eta^{p^+_{\tr}} \uu _{\ell , \alpha}^{\gamma-1} \, \dwvi .
\end{aligned}
\end{equation}

Choosing $\epsilon = |\gamma|\consfi /[2\consfii  (p^+_{\tr}-1)]$, from \eqref{60} and \eqref{61},
\begin{equation}\label{62} 
\begin{aligned}
&\frac{|\gamma|\consfi }{2}  \int_{\Omega}|\grad  \ue|_{\fnf} ^{p} \eta^{p^+_{\tr}} \uu _{\ell ,\alpha}^{\gamma-1} \, \dwvi  \\
& \quad \ \leq \consfii  \frac{p^+_{\tr}}{p^-_{\tr}}  \max \left\{ \left[2\consfii  (p^+_{\tr} -1)/ (|\gamma|\consfi ) \right] ^{p^-_{\tr}-1} , \left[2\consfii ( p^+_{\tr} -1) / (|\gamma|\consfi ) \right] ^{p^+_{\tr} - 1} \right\}\\
& \quad \quad \ \cdot \int_{\Omega} \uu _{\ell , \alpha}^{\gamma+p-1}|\deriv  \eta|_{\fnfs }^{p} \eta^{p^+_{\tr}-p} \, \dwvi   - E(\ell).
\end{aligned}
\end{equation}

Now, we estimate the term $E(\ell)$. From \eqref{28}, 
$$
|\mathscr{A} (\cdot , \grad  \ue ) - \mathscr{A} (\cdot , \grad  \uu )|_{\fnf}\leq \vinf < \frac{\consfi  p^- _{\tr}}{4} \quad \text { in } \quad \Omega.
$$

We also have
$$
|\deriv \ue |_{\fnfs}=|\grad  \ue| _{\fnf} \leq \frac{1}{p^-_{\tr}} |\grad  \ue|_{\fnf}^p +\frac{p^+ _{\tr} -1}{p^+_{\tr}} \quad \text { in } \quad B(\pvr _0 , \tr),
$$ 
and
$$
\begin{aligned}
&\uu _{\ell , \alpha}^\gamma  \eta^{p^+_{\tr}-1} |\deriv  \eta | _{\fnfs }\\
&\quad  \ \leq \frac{1}{p^- _{\tr}} \left[  \uu _{\ell ,\alpha}^{(\gamma+p-1) / p}|\deriv  \eta|_{\fnfs } \eta^{p^+_{\tr} -1 -p^+_{\tr}(p-1) / p} \right]^{p}  +\frac{(p^+_{\tr}-1)}{p^+_{\tr}} \left[   \eta^{p^+_{\tr} (p-1) / p} \uu _{\ell , \alpha}^{\gamma-(\gamma+p-1) / p} \right] ^{p/(p-1)}\\
&\quad \ = \frac{1}{p^- _{\tr}}   \uu _{\ell ,\alpha}^{\gamma+p-1}|\deriv  \eta|_{\fnfs } ^p \eta^{ p^+_{\tr}-p}   +\frac{(p^+_{\tr}-1)}{p^+_{\tr}}    \eta^{p^+_{\tr}} \uu _{\ell , \alpha}^{\gamma-1} .
\end{aligned}
$$

Therefore,
$$
\begin{aligned}
&|E ( \ell  ) |\leq \vinf \int _{\Omega} \left|  \gamma \uu _{\ell ,\alpha}^{\gamma-1} \eta^{p^+_{\tr}} \deriv  \uu+ p^+_{\tr} \uu _{\ell , \alpha}^\gamma  \eta^{p^+_{\tr}-1} \deriv  \eta \right| _{\fnfs }\, \dwvi \\
& \quad \ \leq   \frac{|\gamma|\consfi  }{4}  \int _{\Omega} |\grad  \ue | _{\fnf} ^p\eta^{p^+_{\tr}} \uu _{\ell , \alpha}^{\gamma-1} \, \dwvi  \\
&\quad \quad \ \ + \vinf \frac{p^+_{\tr}}{p^- _{\tr}}  \int _{\Omega} \uu _{\ell ,\alpha}^{\gamma+p-1}|\deriv  \eta| _{\fnfs }^p \eta^{ p^+_{\tr}-p}  \, \dwvi  +\vinf\frac{(p^+_{\tr}-1)(p^+_{\tr} + |\gamma|)}{p^+_{\tr}} \int _{\Omega}   \eta^{p^+_{\tr}} \uu _{\ell , \alpha}^{\gamma-1} \, \dwvi .
\end{aligned}
$$

By combining this with \eqref{62} we arrive at
\begin{equation*}
\begin{aligned}
& \int_{\Omega}|\grad  \ue|_{\fnf} ^{p} \eta^{p^+_{\tr}} \uu _{\ell ,\alpha}^{\gamma-1} \, \dwvi  \\
& \quad \ \leq c_1 \int_{\Omega} \uu _{\ell , \alpha}^{\gamma+p-1}|\deriv  \eta| _{\fnfs }^{p}  \, \dwvi   + c_2 \int _{\Omega}   \eta^{p^+_{\tr}} \uu _{\ell , \alpha}^{\gamma-1} \, \dwvi ,
\end{aligned}
\end{equation*}
where
$$
c_1 :=  \frac{4\consfii  p^+_{\tr}}{ |\gamma|\consfi  p^-_{\tr}}  \max \left\{ \left[2\consfii  (p^+_{\tr} -1)/ (|\gamma|\consfi ) \right] ^{p^-_{\tr}-1} , \left[2 \consfii  (p^+_{\tr} -1)/ (|\gamma|\consfi ) \right] ^{p^+_{\tr} - 1} \right\} + \vinf \frac{4\consfii  p^+_{\tr}}{|\gamma|\consfi p^- _{\tr}} 
$$
and
$$
c_2 := \vinf\frac{4(p^+_{\tr}-1)(p^+_{\tr} +|\gamma|)}{\consfi p^+_{\tr}|\gamma |} .
$$

Using \( |a|^{p_W^-} \leq |a|^{p(\pvr)} + 1 \) for all \( \pvr \in W \) and \( a \in \mathbb{R} \), we obtain the desired estimate in \eqref{63}.\end{proof}

\begin{lemma} \label{82} Let $\{\ue\} \subset \lwo   $ be a family  of functions, and let  $\uu \in \lwo \cap L^\infty _{\loc}(\Omega) $ be an element. Assume that \eqref{78}, \eqref{74}, \eqref{64}, \eqref{70}, and \eqref{28} (of property \ref{94}) are satisfied. Let  $B(\pvr _0 , 20r )\subset \subset \psi (\tilde U)$, $0<20r<\rast$, and $ 1\geq \alpha \geq r$. Suppose that $\uu$ is a  supersolution of \eqref{13}, with $ \ue \geq N \geq 0$ and $N_0 \geq N$. Then, we have
$$
\left[ \fint _{B(\pvr _0 , 2r)} (\ue -N + \alpha)^{c_2 c_3 }\, \dwvi  \right]^{\frac{1}{c_2 c_3}} \leq c_1 ^{1/c_3} \left[ \fint _{B(\pvr _0 , 2r)} (\ue -N + \alpha)^{-c_2 c_3 }\, \dwvi  \right]^{\frac{1}{-c_2 c_3 }},
$$
where {\footnotesize $c _i:= c_i( \consfi  , \consfii  , p^+ , \tilde \conspoin _1 , \conspoin _1 , \conspoin _2 , \conspoin _3 , \conspoin _4 , \conspoin _{\wghtzi} , \conspoin _0 , N _0 , \vinf _0 ,  \cpoin _2 , n , s )>0$ for $i=1,2$, $p^-_{20r} := \inf _{B(\pvr _0 , 20r)} p$, and 
$$
c_3 := \left\{ \left[(p^- _{20r} -1)^{-1} +1\right] ( \| u  \|_{s , B(\pvr _0 , 20r) , \wghtvi } +1)^{\conspoin _3} (\consman _1+1)^{p^+}\right\} ^{-1}. 
$$}

\end{lemma}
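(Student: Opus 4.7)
The plan is the classical John--Nirenberg chain: derive a Caccioppoli-type bound for $\grad \log v$, deduce that $\log v$ has bounded mean oscillation with a uniformly controlled norm, and then use Proposition \ref{73} to pair positive and negative moments of $v$. Set $v := \ue - N + \alpha$. Since $\ue \geq N$, we have $v \geq \alpha \geq r > 0$, and property \ref{94} together with $\uu \in L^\infty_{\loc}(\Omega)$ forces $v$ to be bounded above on $B(\pvr_0, 20r)$ by an admissible constant. Observe that $\ue - N$ is again a supersolution of \eqref{13} because $\grad(\ue - N)=\grad\ue$, so the Caccioppoli inequality \eqref{63} applies to $v$ in the role of $\uu_{\ell,\alpha}$.

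First, apply \eqref{63} with the (admissible, negative) choice $\gamma := 1 - p^-_W$. Then $\gamma - 1 = -p^-_W$, and since $\grad \log v = \grad \ue / v$, the left-hand side becomes $\int_W |\grad \log v|_{\fnf}^{p^-_W}\eta^{p^+_{\tr}}\,\dwvi$. For a ball $B = B(\pvr, \tau)$ with $B(\pvr, 2\tau) \subset B(\pvr_0, 10r)$, take $\tr := 2\tau$, $W := B$, and the cutoff $\eta$ from Lemma \ref{3} satisfying $|\deriv \eta|_{\fnfs} \leq \consman_0\consman_1/\tau$. The right-hand side of \eqref{63} is then handled term by term: the integrand $v^{\gamma-1}=v^{-p^-_W}$ is controlled by $v \geq r$, yielding $r^{-p^-_W}\wghtvi(B)$; the integrand $v^{\gamma+p-1}=v^{p-p^-_W}$ is controlled by \eqref{80} applied to $v$, then by $\|v\|_{s,B,\wghtvi} \leq \|\uu\|_{s,B(\pvr_0,20r),\wghtvi}+C(\vinf_0,N_0)\wghtvi(B)^{1/s}$ together with $p^+_r - p^-_r \leq \conspoin_3$ from \eqref{66}. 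After dividing by $\wghtvi(B)$ and extracting the volume factor via \eqref{34}, one obtains
$$
\fint_B |\grad \log v|_{\fnf}^{p^-_W}\,\dwvi\ \leq\ M_0/\tau^{p^-_W},
$$
where the admissible constant $M_0$ has precisely the structure that defines $c_3^{-1}$: the factor $(p^-_{20r}-1)^{-1}+1$ traces back to the coefficient $\bfc_1$ in \eqref{59}/\eqref{63} with $|\gamma|=p^-_W-1$; the factor $(\|\uu\|_s+1)^{\conspoin_3}$ comes from \eqref{80}; and the factor $(\consman_1+1)^{p^+}$ comes from the cutoff estimate via Lemma \ref{3} combined with \eqref{38}.

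Next, apply the Poincar\'e inequality \eqref{64} to $\log v$ on every ball $B=B(\pvr,\tau)\subset B(\pvr_0,10r)$, justified by a standard approximation of the bounded Sobolev function $\log v$ by smooth bounded functions. Combining with H\"older's inequality and the preceding estimate gives
$$
\fint_B |\log v - (\log v)_B|\,\dwvi\ \leq\ \conspoin_{\wghtzi}\tau\left(\fint_B |\grad \log v|_{\fnf}^{p^-_W}\,\dwvi\right)^{1/p^-_W}\ \leq\ \conspoin_{\wghtzi}M_0^{1/p^-_W}\ =:\ M,
$$
so $\|\log v\|_{\mathrm{BMO}(5B(\pvr_0,2r);\Omega)}\leq M$. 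By the doubling property \eqref{33}, Proposition \ref{73} applies on $B_0:=B(\pvr_0,2r)$; choosing $\epsilon := c_2 c_3$ with $c_3$ defined as in the statement (the construction is arranged so that $\epsilon/M<A$), we obtain $\fint_{B_0} e^{\epsilon|\log v-(\log v)_{B_0}|}\,\dwvi \leq c$. Splitting the exponential yields $\fint_{B_0} v^{\pm\epsilon}\,\dwvi \leq c\,e^{\pm\epsilon(\log v)_{B_0}}$, and multiplying the two estimates cancels $(\log v)_{B_0}$ and gives $\bigl(\fint_{B_0} v^{\epsilon}\,\dwvi\bigr)\bigl(\fint_{B_0} v^{-\epsilon}\,\dwvi\bigr)\leq c^2$. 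Raising to the $1/\epsilon$ power produces the claimed inequality, identifying $c_1 := c^{2/c_2}$ after relabeling.

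The main obstacle is the bookkeeping in the first step: one must verify that the combined contributions of $\bfc_1$, the $L^s$-reduction \eqref{80}, and the chart-dependent cutoff bound produce precisely the constants appearing in the definition of $c_3$, so that the John--Nirenberg threshold $A$ is not exceeded by $\epsilon = c_2 c_3$ and the whole argument closes uniformly in $\ell$.
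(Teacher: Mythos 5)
Your proof follows the same route as the paper's: the Caccioppoli estimate \eqref{63} with $\gamma=1-p^-_W$ gives a uniform bound on $\fint_B|\grad\log v|_{\fnf}^{p^-_W}$ over all balls $B$ well inside $B(\pvr_0,10r)$ (using $v\geq\alpha\geq r$, \eqref{80}, \eqref{38}, and Lemma \ref{3} to track exactly the constants entering $c_3^{-1}$), the Poincar\'e inequality \eqref{64} converts this into a BMO bound for $\log v$, and Proposition \ref{73} applied to $\pm\log v$ yields the reverse H\"older inequality after multiplying the two exponential moments to cancel the mean. The only quibble is the parenthetical condition $\epsilon/M<A$, which should read $\epsilon M<A$ (the absolute exponent times the BMO norm must fall below the John--Nirenberg threshold); this is a slip of notation, not of substance.
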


\begin{proof} Choose a ball $B(\pvr _1 , 2\tr) \subset B(\pvr _0 , 20r)$ and a cutoff function $\eta \in C_0^{\infty}(B(\pvr _1 , 2\tr))$ given in Lemma \ref{3}: $\eta=1$ in $B(\pvr _1 , \tr)$ and $|\deriv  \eta|_{\fnfs } \leq \consman _0 \consman _1  \tr ^{-1}$. 

Let $p_{1,2 \tr}^{-} = \inf _{B(\pvr _1 , 2 \tr)} p$. Taking $W=B(\pvr _1 , 2 \tr)$ and $\gamma=1-p_{1,2\tr}^{-}$ in \eqref{63}, we have
\begin{align*}
&\fint _{B(\pvr _1 , \tr)}|\grad  \log w _{\ell , \alpha}|_{\fnf} ^{p_{1, 2 \tr}^-} \, \dwvi  \\
& \quad \ \leq (c_1 + c_2)\frac{\wghtvi(B(\pvr _1 , 2 \tr))}{\wghtvi(B(\pvr _1 ,  \tr))}\left(\fint _{B(\pvr _1 , 2 \tr)} w _{\ell , \alpha}^{-p_{1, 2 \tr}^-} \, \dwvi  +\fint _{B(\pvr _1 , 2\tr)} (\consman _0 \consman _1)^p w _{\ell , \alpha}^{p-p_{1,2 \tr} ^-} \tr ^{-p} \, \dwvi \right) ,
\end{align*}
where $w_{\ell , \alpha} := \uu _{\ell , \alpha} -N = \ue - N +\alpha$ and $r\leq \alpha \leq 1$.

 From \eqref{70}, \eqref{66}, \eqref{28}, and \eqref{34},
$$
c_1 + c_2 \leq (p_{1,2\tr}^{-} -1)^{-1} C_1 (\consfi  , \consfii  , p^+ , \tilde \conspoin _1 , \conspoin _2 , \conspoin _3 , \cpoin _2).
$$

Using \eqref{38} and the estimate $w _{\ell , \alpha}^{-p_{1, 2 \tr} ^-} \leq \alpha^{-p_{1 , 2 \tr}^-} \leq r^{-p_{1 , 2 \tr}^-} \leq (\tr / 10)^{-p_{ 1 , 2 \tr}^-}$, we get
$$
 \fint _{B(\pvr _1 , \tr)} |\grad  \log w _{\ell , \alpha} |_{\fnf} ^{p_{1,2 \tr} ^-} \, \dwvi  \leq (p_{1,2\tr}^{-} -1)^{-1} C_1 \left[ \left( \frac{\tr}{10} \right)^{-p_{1,2 \tr}^-}+ C_2 \tr ^{-p_{1, 2\tr} ^-} \fint_{B(\pvr _1 , 2\tr)} w _{\ell , \alpha}^{p-p_{1,2 \tr} ^-} \, \dwvi  \right] ,
$$
where $C_2 := \conspoin _4 \max \{(\consman _0 \consman _1)^{p^- _{1,2\tr}} , (\consman _0 \consman _1)^{p^+ _{1 , 2\tr}}\}$.

Let $f =\log w _{\ell , \alpha}$. By  \eqref{64},  \eqref{75}, \eqref{28}, and the above estimate, we obtain
\begin{equation} \label{72}
\begin{aligned}
& \fint _{B(\pvr _1, \tr )}\left|f -f _{B(\pvr _1, \tr )}\right| \, \dwvi  \leq \conspoin _{\wghtzi}\left( \tr ^{p_{1, 2 \tr} ^-} \fint _{ B(\pvr _1, \tr )}|\grad  f | _{\fnf} ^{p_{1 , 2 \tr}^-}\, \dwvi \right)^{\frac{1 } {p_{1 , 2 \tr}^-}}\\
& \quad \ \leq  (p_{1,2 \tr}^{-} -1)^{-\frac{1 } {p_{1 , 2 \tr}^-} } C_3 C_4 \left[ 1+  \fint_{B(\pvr _1 , 2 \tr)} (\uu -N + \vinf _0 +1 )^{p-p_{1,2\tr} ^-} \, \dwvi  \right] ^{\frac{1 } {p_{1,2\tr}^-}} \\
& \quad \ \leq \left[ (p_{20r}^{-} -1)^{-1} + 1 \right] C_3 C_4 \\
& \quad \quad \ \cdot \left\{ 1+ 2^{p_{ 20r} ^+ -p_{ 20r} ^-}  \left[ (\conspoin _0 ^{1/s} +1)\left( \| u  \|_{s , B(\pvr _0 , 20r) , \wghtvi } +1 \right)^{p_{ 20r} ^+ -p_{ 20r} ^-}  + (N+ \vinf _0 + 1)^{p_{ 20r} ^+ -p_{ 20r} ^-} \right]\right\},
\end{aligned}
\end{equation}
if $B ( \pvr _1, \tr ) \subset B( \pvr _0 , 10 r)$, where  $C_3 := C_3 ( \consfi  , \consfii  , p^+ , \tilde \conspoin _1 , \conspoin _2 ,  \conspoin _3 , \conspoin _4 , \conspoin _{\wghtzi} , N _0 ,\cpoin _2 , n) >0$ and $C_4 :=(\consman _1 + 1)^{p^+}$.

Since \eqref{72} holds for all balls $B ( \pvr _1, \tr ) \subset B( \pvr _0 , 10 r)$, by Proposition \ref{73}:
\begin{equation}\label{79}
\fint _{B(\pvr _0 , 2 r) } e^{C_5 C_6 |f -f _{B(\pvr _0 , 10r) }|} \, \dwvi  \leq C_7 ,
\end{equation}
where $C_5:=C_5 ( \consfi  , \consfii  , p^+ , \tilde \conspoin _1 , \conspoin _1 , \conspoin _2  , \conspoin _3 , \conspoin _4 , \conspoin _{\wghtzi} , \conspoin _0 , N _0 , \vinf _0 ,  \cpoin _2, n ,  s )>0$, $C_7:=C_7( \conspoin _1)>0$ and 
$$
C_6 := \left\{\left[ (p^- _{20r} -1 )^{-1} +1 \right]( \| u  \|_{s , B(\pvr _0 , 20r) , \wghtvi } +1)^{\conspoin _3} (\consman _1 + 1)^{p^+} \right\}^{-1}. 
$$

Using \eqref{79}, we can conclude that
$$
\begin{aligned}
& \left(\fint _{B(\pvr _0 , 2r) } e^{C_5 C_6  f } \, \dwvi  \right) \left(\fint _{B(\pvr _0 , 2r) } e^{-C_5 C_6  f } \, \dwvi  \right) \\
& \quad \ = \left(\fint _{B(\pvr _0 , 2r) } e^{C_5 C_6 [ f - f _{B(\pvr _0 , 10r)  } ] } \, \dwvi  \right) \left(\fint _{B(\pvr _0 , 2r) } e^{-C_5 C_6 [  f - f _{B(\pvr _0 , 10r) } ]} \, \dwvi   \right)  \\
& \quad \ \leq  C_7 ^2
\end{aligned}
$$
which implies 
$$
\begin{aligned}
&\left(\fint _{B(\pvr _0 , 2r) } w _{\ell , \alpha}^{C_5 C_6} \, \dwvi  \right)^{\frac{1}{C_5 C_6}}  =\left(\fint _{B(\pvr _ 0 , 2r) } e^{C_5 C_6 f } \, \dwvi  \right)^{\frac{1}{C_5 C_6}} \\
& \quad \ \leq C_7 ^{\frac{2}{C_5 C_6}}\left(\fint _{B(\pvr _ 0 , 2r) } e^{-C_5  C_6 f } \, \dwvi  \right)^{\frac{-1}{C_5 C_6}}  \\
& \quad \ =C_7 ^{\frac{2}{C_5 C_6}} \left(\fint _{B(\pvr _0 , 2r) } w _{\ell , \alpha}^{-C_5 C_6} \, \dwvi  \right)^{\frac{-1}{C_5 C_6}}.
\end{aligned}
$$
Thus we conclude the proof of the lemma.\end{proof}


\section{Removable sets for Hölder continuous solutions} \label{123}

In this section, we prove the two main results of this paper.
\begin{definition} Let \((X,d)\) be a metric space, and fix \(s > 0\). For each \(\delta > 0\) and \(E \subset X\), define
\begin{gather*}
H_\delta^s(E):=\inf \left\{\sum_{j=1}^{\infty} \beta(s) \left[ \frac{\operatorname{diam}(C_j)}{2} \right]^s \:|\: E \subset \cup_{j=1}^{\infty} C_j, \operatorname{diam}(C_j) \leq \delta \right\},
\end{gather*}
where $\beta(s)=\frac{\pi^{\frac{s}{2}}}{\Gamma(\frac{s}{2}+1)}$ and $\Gamma(s)=\int_0^{\infty} e^{-t} t^{s-1} \dt$ is the usual Gamma function.

The \(s\)-Hausdorff measure of \(E\) is the number
\begin{gather*}
H^s(E):=\lim _{\delta \rightarrow 0} H_\delta^s(E)=\sup _{\delta>0} H_\delta^s(E).
\end{gather*}
\end{definition}

\subsection{Proof of Theorem \ref{81}}
\begin{lemma} \label{100}
Let $\{\ue\} \subset \lwo  $ be a family  of functions, and let  $\uu \in \lwo \cap  L_{\loc}^{\infty} (\Omega ) $ be an element. Suppose that \eqref{78} - \eqref{26} and property \ref{103} are satisfied.. Assume that   $r \leq \sigma<\rho \leq 2r < \rast / 10$ and $B(\pvr _0 , 20r) \subset \subset \psi (\tilde U)$.  Let  $|N| \leq N_0$ and  $1<\gamma<1_{\wghtz}$. There is $R_1>0$ such that $(c_2+1) (p-1)\leq 1$ in $\Omega \backslash B(0_{\Omega} , R_1)$, where $c_2=c_2 ( \consfi  , \consfii  , p^+ , \tilde \conspoin _1 , \conspoin _1 , \conspoin _2 , \conspoin _3 , \conspoin _4 , \conspoin _{\wghtzi} , \conspoin _0 , N_0 , $ $ \vinf _0 , \cpoin _2 , n , s )$   is as defined in Lemma \ref{82}.  Furthermore, we have
\begin{enumerate}[label=(\roman*)]
\item  \label{68}  Assume that property \ref{94} is satisfied and $\uu, \ue \in \kob$ for all $\ell$. If $u$ is a solution of the obstacle problem \eqref{4}  with the obstacle $\obsi \leq N$ in $B(\pvr _0 , 2r)$ and $\pvr _0 \in \Omega \backslash B(0_{\Omega} , R_1)$, then 
\begin{equation} \label{83}
\begin{aligned}
&\underset{  B(\pvr _0 , r )}{\operatorname{ess} \sup } \ \left\{ (\ue -N)^{+} \right\}\\
 & \quad \ \leq C_1^{\frac{1}{(p_{20r} ^- -1)^2} }\left\{ \fint _{B(\pvr _0 , 2r )} \left[ (\ue -N)^{+}+r \right] ^{c_2 c_3} \, \dwvi  \right\} ^{\frac{1}{c_2 c_3}} ,
\end{aligned}
\end{equation}
where {\footnotesize $C_1:= C_1( \consfi  , \consfii  , p^+ , \tilde \conspoin _1 , \conspoin _1 , \conspoin _2 , \conspoin _3 , \conspoin _4 , \conspoin _{\wghtz} ,\conspoin _{\wghtzi} , \conspoin _0 , \cpoin _2 , n , s , \gamma , 1_{\wghtz}, N_0 , \vinf _0 , \consman _1 , \| u  \|_{s , B(\pvr _0 , 20r) , \wghtvi } , \| u  \|_{s \gamma ^\prime , B(\pvr _0 , 20 r) , \wghtvi } )>0$,  $s>p^+_{20r} - p^-_{20r}$, $1<\gamma<1_{\wghtz}$,  and 
$$
c_3 := \left\{ \left[(p^- _{20r} -1 )^{-1}+1\right]( \| u  \|_{s , B(\pvr _0 , 20r) , \wghtvi } +1)^{\conspoin _3} (\consman _1+1)^{p^+} \right\}^{-1} . 
$$}

\item \label{69} Assume that \eqref{28} (of property \ref{94}) holds. If $\uu$ is a supersolution of \eqref{13}, then, for $\pvr _0 \in \Omega \backslash B(0_{\Omega} , R_1)$: 
\begin{equation*}
\begin{aligned}
&\underset{  B(\pvr _0 , r )}{\operatorname{ess} \sup } \ \left\{ |(\ue -N)^{-}| \right\}\\
&  \quad \ \leq C_ 2^{\frac{1}{(p_{20r} ^- -1)^2} }  \left\{ \fint _{B(\pvr _0 , 2r )} \left[ |(\ue -N)^{-} |+r \right] ^{c_2 c_3} \, \dwvi  \right\} ^{\frac{1}{c_2 c_3}} ,
\end{aligned}
\end{equation*}
where {\footnotesize  $C_2 := C_2 ( \consfi  , \consfii  , p^+ , \tilde \conspoin _1 , \conspoin _1 , \conspoin _2 , \conspoin _3 , \conspoin _4 , \conspoin _{\wghtz} , \conspoin _{\wghtzi} , \conspoin _0 , \cpoin _2 , n , s , \gamma , 1_{\wghtz}, N_0 , \vinf _0  , \consman _1 , \| u  \|_{s , B(\pvr _0 , 20r) , \wghtvi } , \| u  \|_{s \gamma ^\prime , B(\pvr _0 , 2 0r) , \wghtvi } )>0$.}
\end{enumerate}

In \ref{68} and \ref{69}, $p^{-}_{20r} := \inf _{B(\pvr _0 , 20r)} p$ and $p^{+}_{20r} := \sup _{B(\pvr _0 , 20r)} p$.
\end{lemma}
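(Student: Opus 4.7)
The plan is to combine Proposition \ref{67} with the self-improving estimate of Corollary \ref{127} in order to lower the integrability exponent on the right-hand side from $\gamma p^-_{2r}$ down to the small power $c_2 c_3$. The existence of $R_1$ comes first: the constant $c_2$ from Lemma \ref{82} depends only on the fixed quantities entering the hypotheses of Lemma \ref{100}, and the condition $\lim_{d(\pvr,0_\Omega)\to\infty}p(\pvr)=1$ of property \ref{103} therefore allows one to choose $R_1$ so large that $(c_2+1)(p(\pvr)-1)\leq 1$ for every $\pvr\in\Omega\setminus B(0_\Omega,R_1)$. For $\pvr_0$ in this complement, $p^-_{20r}-1$ is small, $c_3$ is of order $p^-_{20r}-1$ (because $(p^-_{20r}-1)^{-1}$ dominates its defining bracket), and consequently $c_2 c_3\leq 1<\gamma p^-_{2r}$, a strict inequality needed later so that the exponent $\theta$ appearing in Corollary \ref{127} lies in $(0,1)$.

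For part \ref{68}, set $v:=(\ue-N)^{+}+r$, which is bounded below by $r>0$. Proposition \ref{67} \ref{23}, applied with $\rho=2r$ fixed and $\sigma=2\lambda r$ for $\lambda\in[1/2,1)$, yields an inequality of the form
\begin{equation*}
\|v\|_{L^\infty(B(\pvr_0,2\lambda r))}\leq \bar c\,(1-\lambda)^{-\beta}\left(\fint_{B(\pvr_0,2r)}v^{\gamma p^-_{2r}}\,\dwvi\right)^{\frac{1}{\gamma p^-_{2r}}},
\end{equation*}
where $\bar c$ gathers the explicit prefactors from Proposition \ref{67} \ref{23} and $\beta:=p^+_{2r}1_{\wghtz}/[p^-_{2r}(1_{\wghtz}-\gamma)]$. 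This is exactly the hypothesis of Corollary \ref{127} at base radius $2r$ with source exponent $\gamma p^-_{2r}$ and target exponent $q=c_2 c_3$, yielding $\theta=c_2 c_3/(\gamma p^-_{2r})$; evaluating the conclusion at $\lambda=1/2$ bounds $\sup_{B(\pvr_0,r)}v$ by a constant multiple of $\bigl(\fint_{B(\pvr_0,2r)}v^{c_2 c_3}\,\dwvi\bigr)^{1/(c_2 c_3)}$, which, since $(\ue-N)^+\leq v$, gives \eqref{83}.

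The remaining step is bookkeeping of the prefactor. Carrying the explicit $C$ from Corollary \ref{127} (together with the factor $(1-\lambda)^{-\beta/\theta}=2^{\beta/\theta}$ produced at $\lambda=1/2$) through the algebra shows that, after simplification, the dominant dependence on $\theta$ is $2^{\beta/\theta^2}$. Because $c_3$ is of order $p^-_{20r}-1$ and $c_2$ is fixed, $1/\theta^2=O(1/(p^-_{20r}-1)^2)$, while Lemma \ref{128} \ref{76} together with property \ref{103} keeps $\beta$ and every other factor entering $\bar c$ uniformly bounded as $p\to 1$: in particular, the $\max\{\cdots^{p^+_{2r}-1},\cdots^{p^-_{2r}-1}\}$ terms inside $\cones _1$, the factors $\cones _5$ and $\cones _6$ (the former absorbed into $C_1$ via the $\|u\|_{s\gamma'}$ parameter), and the base $(p^-_{2r})^{\beta}$ all tend to finite limits. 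Absorbing these bounded pieces into $C_1$ yields exactly the asserted form $C_1^{1/(p^-_{20r}-1)^2}$. Part \ref{69} is proved identically with $v$ replaced by $|(\ue-N)^-|+r$ and Proposition \ref{67} \ref{23} replaced by Proposition \ref{67} \ref{24}. The main technical difficulty is this bookkeeping: one must verify that the only singular $p^-_{20r}-1$ dependence is the squared-inverse one coming from $2^{\beta/\theta^2}$, so that the stated exponent is sharp, and that all polynomial-in-$\theta$ pieces and the factors depending on $\|u\|_s$ are absorbed into the $C_1$ (respectively $C_2$) prefactor with the advertised list of parameters.
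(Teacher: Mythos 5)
Your proposal is correct and follows essentially the same route as the paper: the paper's own proof consists precisely of choosing $R_1$ via property \ref{103} so that $(c_2+1)(p-1)\leq 1$ outside $B(0_\Omega,R_1)$ and then invoking Proposition \ref{67} \ref{23} to verify the hypothesis of Corollary \ref{127} with $q=c_2c_3$. You simply supply the bookkeeping of constants (the origin of the exponent $1/(p^-_{20r}-1)^2$ and the boundedness of the remaining prefactors) that the paper leaves implicit.
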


\begin{proof}
\ref{68} Choose $R_0 >0$ such that 
$$
(c_2 +1)( p -1 )< 1 \quad \text { in } \Omega \backslash B(0_{\Omega} , R_0), 
$$
 where $c_2=c_2( \consfi  , \consfii  , p^+ , \tilde \conspoin _1 , \conspoin _1 ,  \conspoin _2, \conspoin _3 , \conspoin _4 , \conspoin _{\wghtzi} , N_0 , \vinf _0 , \conspoin _0 , \cpoin _2 ,  n , s )>0$. Next, from Corollary \ref{127} and Proposition \ref{67} \ref{23}, we obtain \eqref{83}.
 
 Similarly to \ref{68}, we conclude \ref{69}.\end{proof}

\begin{lemma} \label{85}
Let $\{\ue\} \subset \lwo  $ be a family  of functions, and let  $\uu \in \lwo \cap L^{\infty} _{\loc}(\Omega)$ be an element. Assume that \eqref{78}–\eqref{64} and \eqref{28} (from property \ref{94}) are verified, with \( p \in C(\Omega) \) satisfying property \ref{103}. Let  $r \leq \sigma<\rho \leq 2r \leq \rast / 10$, $B(\pvr _0 , 20r) \subset \subset \psi (\tilde U)$, and $ 1<\gamma<1_{\wghtz}$.   Suppose that $\uu$ is a  supersolution of \eqref{13} with $\ue \geq N$ in $B(\pvr _0 , 2r)$ and  $0 \leq N \leq N_0$. There is $R_1>0$ such that $(c_2+1) (p-1)\leq 1$ in $\Omega \backslash B(0_{\Omega} , R_1)$, where $c_2=c_2 ( \consfi  , \consfii  , p^+ , \tilde \conspoin _1 , \conspoin _1 , \conspoin _2 , \conspoin _3 , \conspoin _4 , \conspoin _{\wghtzi} , \conspoin _0 , N_0 , \vinf _0 , \cpoin _2 , n , s )$ is as defined in Lemma \ref{82}. Furthermore, we have
\begin{equation} \label{87}
\begin{aligned}
& \left[\fint _{B(\pvr _0 , 2r )} (\ue -N+r)^{c_2 c_3} \, \dwvi  \right] ^{\frac{1}{c_2c_3}} \\
 & \quad  \ \leq   \left[ C_3(p^-_{20r}-1)\right]^{-\frac{1}{p^-_{20r}-1}} \underset{  B(\pvr _0 , r )}{\operatorname{ess} \inf } \ \left\{ \ue -N+r \right\} ,
\end{aligned}
\end{equation}
where {\footnotesize $C_3:= C_3( \consfi  , \consfii  , p^+ , \tilde \conspoin _1 , \conspoin _1 , \conspoin _2 , \conspoin _3 , \conspoin _4 , \conspoin _{\wghtz} , \conspoin _{\wghtzi} , \conspoin _0 , \cpoin _2 , n , s , \gamma , 1_{\wghtz}, N_0 , \vinf _0 , \consman _1 , \| u  \|_{s , B(\pvr _0 , 20r) , \wghtvi } , \| u  \|_{s \gamma ^\prime , B(\pvr _0 , 2 r) , \wghtvi } )\in (0,1)$, $s>p^+_{20r} - p^-_{20r}$, $1<\gamma<1_{\wghtz}$, and
$$
c_3 := \left\{ \left[ (p^- _{20r} -1)^{-1} +1 \right] ( \| u  \|_{s , B(\pvr _0 , 20r) , \wghtvi } +1)^{\conspoin _3} (\consman _1+1)^{p^+} \right\}^{-1}. 
$$
}
\end{lemma}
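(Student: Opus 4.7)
The plan is to chain Lemma \ref{82} (the weak reverse Hölder inequality for the shifted supersolution) with Proposition \ref{84} (the lower bound for supersolutions). Lemma \ref{82} will convert the positive-power mean $\fint (\ue-N+r)^{c_2c_3}$ into the negative-power mean $\fint (\ue-N+r)^{-c_2c_3}$, after which Proposition \ref{84} will dominate the latter by the essential infimum on $B(\pvr_0,r)$.

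First I would exploit property \ref{103}: since the constant $c_2$ produced by Lemma \ref{82} depends only on the structural data listed in its statement (and in particular not on $\pvr_0$), and since $p(\pvr) \to 1$ as $d(\pvr,0_\Omega)\to\infty$, I can fix $R_1>0$ so that $(c_2+1)(p-1)\le 1$ on $\Omega\setminus B(0_\Omega,R_1)$. This inequality will play a double role: it controls $p^-_{20r}-1$ from above and, because $c_3 \asymp (p^-_{20r}-1)$ by its definition, it forces $s_0:=c_2c_3 \le 1 < \gamma$ uniformly on the exterior, which legitimises applying Proposition \ref{84} with the choice $s_0 = c_2c_3$, $\sigma=r$, $\rho=2r$.

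With these choices in place I would apply Lemma \ref{82} with $\alpha=r$ (admissible since $r\le\alpha\le 1$) to obtain
\[
\Big[\fint_{B(\pvr_0,2r)} (\ue-N+r)^{c_2c_3}\,\dwvi\Big]^{1/(c_2c_3)} \le c_1^{1/c_3}\Big[\fint_{B(\pvr_0,2r)} (\ue-N+r)^{-c_2c_3}\,\dwvi\Big]^{-1/(c_2c_3)},
\]
and then Proposition \ref{84} with $s_0=c_2c_3$ to bound the right-hand mean by $\Phi\cdot\operatorname*{ess\,inf}_{B(\pvr_0,r)}\{\ue-N+r\}$, where $\Phi=(\cones_3 1_{\wghtz}/\gamma)^{\cones_{10}/s_0}\cones_{14}^{1_{\wghtz}\gamma/(s_0(1_{\wghtz}-\gamma))}2^{p^+_{2r}1_{\wghtz}\gamma/(s_0(1_{\wghtz}-\gamma))}$. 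Composing these two inequalities produces \eqref{87} with a multiplicative constant $c_1^{1/c_3}\,\Phi$.

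The main obstacle is the bookkeeping needed to repackage $c_1^{1/c_3}\,\Phi$ in the clean form $[C_3(p^-_{20r}-1)]^{-1/(p^-_{20r}-1)}$. The key observation is that $1/c_3$ and $1/s_0$ are both comparable to $(p^-_{20r}-1)^{-1}$, so every factor of the type $A^{1/c_3}$ or $A^{1/s_0}$ contributes an exponent $1/(p^-_{20r}-1)$. The inner constant $\cones_{14}$ of Proposition \ref{84} contains $\cones_{11}$, which carries the singular factor $[2\consfii(p^+-1)/\consfi(p^-_{2r}-1)]^{p^+-1}$; under $(c_2+1)(p-1)\le 1$ all such $(p^-_{20r}-1)^{-(p^+-1)}$ powers stay bounded and get absorbed into a single constant $C_3\in(0,1)$ depending only on the allowed parameters. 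The remaining $(p^-_{20r}-1)^{-1/(p^-_{20r}-1)}$ factor is precisely the contribution of the $(p^-_{2r}-1)^{-1}$ term from $\cones_{11}$ raised to the power $1/s_0\asymp(p^-_{20r}-1)^{-1}$, which is then factored out to give the stated form $[C_3(p^-_{20r}-1)]^{-1/(p^-_{20r}-1)}$.
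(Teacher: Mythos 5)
Your proposal is correct and follows exactly the route of the paper's own (one-line) proof: chain the weak reverse Hölder inequality of Lemma \ref{82} (with $\alpha=r$) with Proposition \ref{84} applied with $s_0=c_2c_3$, $\sigma=r$, $\rho=2r$, using property \ref{103} to fix $R_1$ so that $c_2c_3\le\gamma$ and the constants can be repackaged. The paper omits the bookkeeping you spell out, but the argument is the same.
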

\begin{proof}
The proof follow from of  Lemma \ref{82} and Proposition \ref{84}, taking $s_0=c_2c_3$.\end{proof}

Remember that:

\medskip

\noindent {\bf Theorem \ref{81}.} {\it Let \( (\Omega, \fnf, \wghtvi) \) be a reversible Finsler manifold, where \( \wghtvi \) and \( p \) satisfy conditions \eqref{118}–\eqref{64}. Let \( K \) be a compact subset of \( \Omega \), and define \( K_\delta := \{\pvr \in \Omega \:|\: d(\pvr, K) < \delta\} \), where \( \overline{K_\delta} \subset \Omega \). Suppose that \( \vartheta \in \mathscr{K}_{\obsi}(\Omega) \) is a solution to the obstacle problem \eqref{4}, with the obstacle \( \obsi \in C(\Omega) \) satisfying
\begin{equation}\label{107}
|\obsi(\pvr) - \obsi(\pvry)| \leq \conrem \, d^\alpha(\pvr, \pvry), \quad \text { for all } \pvr \in K, \pvry \in \Omega,
\end{equation}
where \( 0 < \alpha \leq 1 \) and \( \conrem > 0 \).

Additionally, let \( \{ \vartheta_\ell \} \subset \mathscr{K}_{\obsi}(\Omega) \) be a family such that \( \{\vuu = \obsi\} \subset \{\vue = \obsi\} \) for all \( \ell \), and suppose that this family satisfies properties \ref{94} and \ref{93}, with \( p \in C(\Omega) \) satisfying property \ref{103}.

There exists $ R_1= R_1 ( \consfi  , \consfii  , p^+ , \tilde \conspoin _1 , \conspoin _1 , \conspoin _2  ,  \conspoin _3 , \conspoin _4 , \conspoin _{\wghtzi} , \conspoin _0 , \vinf _0 ,  n , s , \cpoin _1 , \inf _{B(\pvr , 5r)} \obsi , \sup _{B(\pvr , 5r)} \obsi)>0$, where $s>\sup _{K_\delta} p - \inf _{K_\delta} p$, such that $p-1\leq 1$ in $\Omega \backslash B(0_{\Omega} , R_1)$.  Furthermore, for every $\pvr \in K \cap( \Omega \backslash B(0_{\Omega} , R_1) )$ and $0<r< \frac{1}{41}\min \{\rast , \delta\}$, the following  holds:
\begin{align*}
&\mu_\ell(B(\pvr, r)) \\
& \quad \ \leq C _1 C_2^{\frac{\tilde{\conspoin} _1}{p^-_{ 41r} -1}}   (p^-_{41r}-1)^{-\tilde{\conspoin} _1} r^{\cpoin _1 -p(\pvr) + \alpha (p(\pvr) -1 )  
} + \Upsilon _1 ( \pvr , 5r) + \Upsilon _2 (\pvr , r), 
\end{align*}
if $B(\pvr , 41 r )\subset \subset  \psi (\tilde U)$, where $\psi : \tilde U \rightarrow \psi (\tilde U )\subset \Omega$ is a chart,  {\footnotesize $p^-_{41r} := \inf _{B(\pvr , 41r)}p$,    $C_i:= C_i( \consfi  , \consfii  , p^+ , \tilde \conspoin _1 , \conspoin _1 , \conspoin _2 , \conspoin _3 ,$ $ \conspoin _4 , \conspoin _{\wghtz} , \conspoin _{\wghtzi} , $ $\conspoin _0 , \conspoin , n , s , \gamma , 1_{\wghtz}, \cpoin _1 , \cpoin _2 , \alpha , \conrem , $ $\inf _{B(\pvr , 5r)} \obsi , \sup _{B(\pvr , 5r)} \obsi , \consman _1 ,  \vinf _0 , \| \vuu  \|_{s , B(\pvr  , 41r) , \wghtvi } , $ $ \| \vuu  \|_{s \gamma ^\prime , B(\pvr  , 41r) , \wghtvi } )>0$,  for $i=1,2$, and $1<\gamma<1_{\wghtz}$.  Also, \(\consman_1\) is a constant (depending on \(\psi\) and \(\fnf\)) given in \eqref{114}, and $\rast$ is defined in Lemma \ref{128} \ref{76}.}}

\medskip

\begin{proof} {\it Steep 1.} \ Since \(\vuu \) is a solution to the obstacle problem \eqref{4} in \(\mathscr{K}_{\obsi}(\Omega)\), it follows that \(\vuu\) is a supersolution of \eqref{13}. Moreover, since the obstacle \(\obsi \in C(\Omega)\), Proposition \ref{95} ensures that \(\vuu \) is continuous.

If $B(\pvr , r) \cap\{\pvry \in \Omega \:|\: \vartheta(\pvry)=\obsi(\pvry)\}=\emptyset$, by Proposition \ref{95} and property \ref{93} we have
\begin{align*}
& \int _{\Omega} \mathscr{A}( \cdot , \grad  \vue ) \multg   \deriv  \zeta \, \dwvi  = \int _{\{ \vuu > \obsi\}} \left[\mathscr{A}( \cdot , \grad  \vue )  - \mathscr{A}( \cdot , \grad  \vuu )  \right]\multg  \deriv  \zeta  \, \dwvi  \leq \Upsilon  _2 (\pvr , r),
\end{align*} 
where $\zeta \in C^{\infty} _0 ( B(\pvr , r) )$ and $0\leq \zeta \leq 1$.

If \( B(\pvr, r) \cap \{ \pvry \in \Omega \mid \vuu(\pvry) = \obsi(\pvry) \} \neq \emptyset \), then let \( \pvr_0 \in B(\pvr, r) \cap \{ \pvry \in \Omega \mid \vuu(\pvry) = \obsi(\pvry) \} \). As a consequence of the hypotheses, \( \pvr_0 \in B(\pvr, r) \cap \{ \pvry \in \Omega \mid \vue(\pvry) = \obsi(\pvry) \} \). Then, 
\begin{equation} \label{102}
 \mu_\ell(B(\pvr, r)) \leq \mu_\ell(B(\pvr_0, 2r)).
 \end{equation}
Next, we proceed to estimate the right hand side of \eqref{102}.

There exist points \( \pvr_1, \pvr_2 \in B(\pvr_0, 8r) \) such that \( \sup_{B(\pvr_0, 8r)} \obsi = \obsi(\pvr_1) \) and \( \inf_{B(\pvr_0, 8r)} \obsi = \obsi(\pvr_2) \). Then, \eqref{107} implies that
\begin{equation} \label{96}
\begin{aligned}
&\sup _{ B(\pvr _0, 8 r)} \obsi -\inf _{ B(\pvr _0, 8 r)} \obsi   =\obsi (\pvr _1)-\obsi (\pvr )+\obsi (\pvr )-\obsi (\pvr _2) \\
&\quad \ \leq \conrem d ^{\alpha}(\pvr _1, \pvr )+\conrem  d ^{\alpha}(\pvr , \pvr _2) \\
&\quad \  \leq \conrem 2^\alpha \left( d ^\alpha (\pvr _1,\pvr _0) + d ^\alpha (\pvr _0 , \pvr ) +d^\alpha (\pvr , \pvr _0)+ d^\alpha (\pvr _0 , \pvr _2)\right) \\
&\quad \ \leq \conrem 2^\alpha \left[ (8 r)^\alpha+r^\alpha+r^\alpha+(8 r)^\alpha \right]  \leq C_4 r^\alpha ,
\end{aligned}
\end{equation}
where $C_4 := 2^{\alpha +1}\conrem (8^{\alpha} + 2)$.

We define,
$$
\begin{aligned}
& \alpha_1=\sup _{B(\pvr _0, 4 r)} \obsi -\inf _{ B(\pvr _0, 4 r)} \obsi +\inf _{ B(\pvr _0, 4 r)} \vue , \\
& \alpha_2=\sup _{ B(\pvr _0, 4 r)} \obsi -\inf _{ B(\pvr _0, 4 r)} \obsi -\inf _{ B(\pvr _0, 4 r)} \vue  .
\end{aligned}
$$


We have
\begin{equation}\label{89}
\inf_{B(\pvr_0, 4r)} \obsi \leq \inf_{B(\pvr_0, 4r)} \vue \leq \vue(\pvr_0) = \obsi(\pvr_0) \leq \sup_{B(\pvr_0, 4r)} \obsi.
\end{equation}

Thus, we obtain
$$
|\alpha_1| \leq 2 \left(\sup_{B(\pvr  , 5r)} \obsi \right)^+ - \inf_{B(\pvr  , 5r)} \obsi.
$$
Furthermore, since \(\obsi \leq \alpha_1\) in \(B(\pvr_0, 4r)\), we take \(N = \alpha_1\) and \(N_0 = 2 (\sup_{B(\pvr  , 5r)} \obsi)^+ - \inf_{B(\pvr  , 5r)} \obsi\) in  \eqref{83}. Then,
$$
\sup _{ B(\pvr _0, 2 r)}\left\{ (\vue -\alpha_1)^{+} \right\} \leq C_1^{\frac{1}{(p^-_{40r} -1)^2}} \left\{ \fint_{B(\pvr _0, 4 r)} \left[ (\vue  -\alpha_1)^{+} + 2r \right]^{c_2 c_3} \, \dwvi  \right\}^{\frac{1}{c_2 c_3}},
$$
where $p^-_{40r} := \inf _{B(\pvr _0, 40r)}p$.

Since
$$
\begin{aligned}
&\vue -\alpha_1  =\vue -\left(\sup _{ B(\pvr _0, 4 r)} \obsi -\inf _{ B(\pvr _0, 4 r)} \obsi \right)-\inf _{ B(\pvr _0, 4 r)} \vue  \\
& \quad \ \leq \vue + \left( \sup _{ B(\pvr _0, 4 r)} \obsi -\inf _{ B(\pvr _0, 4 r)} \obsi \right)-\inf _{ B(\pvr _0, 4 r)} \vue  =\vue +\alpha_2
\end{aligned}
$$
and $\vue  + \alpha_2 \geq 0 $ in $B(\pvr _0, 4 r)$, we have
\begin{equation} \label{88}
\sup _{ B(\pvr _0, 2 r)}\left\{ (\vue  -\alpha_1)^{+}\right\} \leq C_1^{\frac{1}{(p^-_{40r} -1)^2}} \left[ \fint_{B(\pvr _0, 4 r)} \left( \vue  + \alpha _2 + 2r \right)^{c_2 c_3} \, \dwvi  \right]^{\frac{1}{c_2 c_3}}.
\end{equation}

If $\alpha_2 \geq 0$, then $\vue +\alpha_2$ is a nonnegative supersolution of \eqref{13} in $B(\pvr _0, 4 r)$, then we take $N=N_0=0$ in \eqref{87}. If $\alpha_2<0$, we take $N=-\alpha_2$ and $N_0=\sup _{ B(\pvr  , 5r) } \obsi $ in \eqref{87}. Then, from \eqref{88} and \eqref{89},
$$
\begin{aligned}
&\sup _{ B(\pvr _0, 2 r)}\left\{ (\vue  -\alpha_1)^{+}\right\}  \leq  C_5 \inf _{B(\pvr _0, 2 r)} \{ \vue  +\alpha_2 +2r\} \\
&\quad \ \leq C_5\left( \vue (\pvr _0)+\sup _{ B(\pvr _0, 4 r)} \obsi  - \inf _{ B(\pvr _0, 4 r)} \obsi -\inf _{ B(\pvr _0, 4 r)} \vue  + 2r \right) \\
& \quad \ \leq 2C_5\left(\sup _{ B(\pvr _0, 4 r)} \obsi -\inf _{ B(\pvr _0, 4 r)} \obsi + r\right),
\end{aligned}
$$
where $C_5:= C_1^{\frac{1}{(p^-_{40r} -1)^2}}   \left[ C_3(p^-_{40r}-1)\right]^{-\frac{1}{p^-_{40r}-1}} $,  and  $C_i:= C_i( \consfi  , \consfii  , p^+ , \tilde \conspoin _1 , \conspoin _1 , \conspoin _2 , \conspoin _3 , \conspoin _4 , \conspoin _{\wghtzi} , \conspoin _0 , \cpoin _2 , n , s , \gamma , 1_{\wghtz}, $ $ \inf _{B(\pvr , 5r) } \obsi , \sup _{B(\pvr , 5r)} \obsi , \vinf _0 , \consman _1 , \| \vuu  \|_{s , B(\pvr _0 , 40r) , \wghtvi } , \| \vuu  \|_{s \gamma ^\prime , B(\pvr _0 , 40r) , \wghtvi } )>0$,  $i=1,3$, $C_1 , C_3^{-1} >1$,  $s>p^+_{40r} - p^-_{40r}$, $1<\gamma<1_{\wghtz}$. Hence,
$$
\begin{aligned}
& \sup _{\pvr \in  B(\pvr _0, 2 r)} \left\{ \left(\vue (\pvr )-\sup _{ B(\pvr _0, 4 r)} \obsi  +\inf _{ B(\pvr _0, 4 r)} \obsi -\inf _{ B(\pvr _0, 4 r)} \vue  \right)^{+} \right\} \\
& \quad \ \leq 2 C_5\left(\sup _{ B(\pvr _0, 4 r)} \obsi -\inf _{ B(\pvr _0, 4 r)} \obsi + r\right) .
\end{aligned}
$$

So,
\begin{equation}\label{97}
\sup _{ B(\pvr _0, 2 r)} \vue  - \inf _{ B(\pvr _0, 2 r)} \vue  \leq 3 C_5\left(\sup _{ B(\pvr _0, 4 r)} \obsi -\inf _{ B(\pvr _0, 4 r)} \obsi + r\right) .
\end{equation}
 
 $ $
 
{\it Steep 2.} \ Let $\eta \in C_0^{\infty}(B(\pvr _0, 4 r))$ with $0<\eta \leq 1$, $\eta=1$ in  $B(\pvr _0, 2 r)$, and $|\deriv  \eta|_{\fnfs } \leq \consman _0 \consman _1 (2r)^{-1}$, as stated in Lemma \ref{3}.

Define $p^+_{4r} := \sup _{B(\pvr _0, 4r)}p$. If \( \zeta \in C_0^\infty(B(\pvr_0, 2r)) \) with \( 0 \leq \zeta \leq 1 \), we have 
\begin{align*}
&\int _{B(\pvr , 5 r)}  \mathscr{A}(\cdot , \grad  \vue ) \multg  \deriv  \left(\eta ^{p^+_{4r}}- \zeta\right) \, \dwvi\\
&\quad \ = \int _{B(\pvr , 5r)} \left[ \mathscr{A}(\cdot , \grad  \vue ) - \mathscr{A}(\cdot , \grad  \vuu ) \right]\multg  \deriv  \left(\eta ^{p^+_{4r}}- \zeta \right) \, \dwvi + \int _{B(\pvr , 5r)}  \mathscr{A}(\cdot , \grad  \vuu ) \multg  \deriv  \left(\eta ^{p^+_{4r}}- \zeta \right) \, \dwvi\\
&\quad \ \geq - \Upsilon _1 ( \pvr , 5r), 
\end{align*}
due to property \ref{93}, the fact that \( \vuu \) is a supersolution of \eqref{13}, and  \( \eta ^{p^+_{4r}} - \zeta \geq 0 \) in \( B(\pvr_0, 4r) \). 

Using property \ref{21}, we get:
\begin{align}
& \mu _\ell (B(\pvr _0, 2 r))   \leq p^+_{4r} \int_{B(\pvr _0, 4 r)} \eta^{p_{4 r}^{+}-1} \mathscr{A}(\cdot , \grad  \vue ) \multg  \deriv  \eta \, \dwvi  + \Upsilon _1 ( \pvr , 5r)\label{91}\\
&\quad \  \leq p^+_{4r} \consfii  \int_{B(\pvr _0, 4 r)} \eta^{p_{4 r}^{+}-1}|\grad  \vue |_{\fnf} ^{p-1} |\deriv  \eta|_{\fnfs } \, \dwvi   + \Upsilon _1 (\pvr , 5r) \nonumber  \\
&\quad \   \leq 2 p^+_{4r} \consfii \||\grad  \vue |^{p-1} _{\fnf} \eta^{p_{4 r}^{+}-1}\|_{p/(p-1), B(\pvr _0, 4 r) , \wghtv } \| |\deriv  \eta| _{\fnfs }\|_{p, B(\pvr _0, 4 r) , \wghtv } + \Upsilon _1 (\pvr , 5r) \nonumber \\
& \quad \  \leq  2 p^+_{4r} \consfii  \max \left\{\left(\int_{B(\pvr _0, 4 r)}|\grad  \vue |_{\fnf}^{p} \eta^{p_{4 r}^{+}} \, \dwvi  \right)^{\frac{p^{+} _{4r} -1}{p^{+} _{4r} }} , \left(\int_{B(\pvr _0, 4 r)}|\grad  \vue |_{\fnf} ^{p} \eta^{p_{4r}^{+}} \, \dwvi  \right)^{\frac{p^{-} _{4r} -1}{p^{-}  _{4r} }} \right\}  \nonumber\\
& \quad \quad \ \cdot \max \left\{ \left(\int_{B(\pvr _0, 4 r)}|\deriv  \eta|_{\fnfs }^{p} \, \dwvi  \right)^{\frac{1}{p^{+} _{4r} }} , \left(\int_{B(\pvr _0, 4 r)}|\deriv  \eta|_{\fnfs }^{p} \, \dwvi  \right)^{\frac{1}{p^{-} _{4r}} } \right\} + \Upsilon _1 (\pvr , 5r) . \label{92}
\end{align}


We take  $N=\sup _{ B(\pvr _0, 4 r)} \vue $, $h=0$, and $\tr = 4r$ in \eqref{90}. By \eqref{78}, \eqref{97}, \eqref{96}, and \eqref{38}, we obtain
\begin{align}
& \int_{B(\pvr _0, 4r) }|\grad  \vue | _{\fnf}^{p} \eta  ^{p_{4r } ^{+}} \, \dwvi  \leq C_6  \int_{B(\pvr _0, 4r)}  \left( \sup _{ B(\pvr _0, 4r)} \vue  - \vue  \right)  ^{p} |\deriv \eta  |_{\fnfs }^{p} \, \dwvi     + C_6 \int _{ B(\pvr _0, 4r) } \eta ^{p^+_{4r}} \, \dwvi  \nonumber \\
& \quad \ \leq C _6 \int_{B(\pvr _0, 4 r)}\left( \sup _{ B(\pvr _0, 4 r)} \vue - \inf _{ B(\pvr _0, 4r)} \vue \right)^{p} \left( \frac{\consman _0 \consman _1}{2r} \right)^{p} \, \dwvi  +C_6\conspoin (4r)^{\cpoin _1} \nonumber\\
& \quad \ \leq C _6\int_{B(\pvr _0, 4 r)}  (3 C_5)^p\left( \sup _{ B(\pvr _0, 8 r)} \obsi -\inf _{ B(\pvr _0, 8 r)} \obsi + r \right) ^{p} \left( \frac{\consman _0 \consman _1}{2r} \right)^{p} \, \dwvi  + C_6\conspoin (4r)^{\cpoin _1} \nonumber \\
& \quad \ \leq C _6 \int_{B(\pvr _0, 4 r)}  (3 C_5)^p\left( C_4 r^{\alpha} + r \right) ^{p} \left( \frac{\consman _0 \consman _1}{2r} \right)^{p} \, \dwvi  + C_6\conspoin (4r)^{\cpoin _1} \nonumber \\
& \quad \ \leq C _7 C_5 ^{p^+_{4r}} r^{\cpoin _1-(1-\alpha) p^{-} _{4r}} , \label{98}
\end{align}
where $C_6:= C_6(\consfi  , \consfii  , p^+)$ and $C_7:=C_7 (\consfi  , \consfii  , p^+, n , \consman _1 , \conspoin , \conspoin _4 , \cpoin _1 , \alpha , \conrem)>0$.

From \eqref{92}, \eqref{98},  and \eqref{78}, we conclude
$$
\begin{aligned}
&\mu _\ell (B(\pvr _0, 2r))  \\
&\quad \ \leq 2 p^+ \consfii  (2^{-1}\consman _0 \consman _1+1)^{p^+ _{4r}/ p^- _{4r}} \max \left\{ \left[ C _7 C_5 ^{p^+ _{4r}} r^{\cpoin _1-(1-\alpha) p^- _{4r} } \right]^{\frac{p^{+} _{4r} -1}{p^{+} _{4r}}} , \left[  C _7 C_5 ^{p^+ _{4r}} r^{\cpoin _1-(1-\alpha) p^- _{4r} }    \right]^{\frac{p^{-} _{4r} -1}{p^{-} _{4r}}} \right\} \\
& \quad \quad \ \cdot \max \left\{ \left[ \conspoin (4r)^{\cpoin _1} \conspoin _4 r^{-p^- _{4r}} \right]^{\frac{1}{p^{+} _{4r} }} , \left[ \conspoin (4r)^{\cpoin _1} \conspoin _4 r^{-p^- _{4r}}\right]^{\frac{1}{p^{-} _{4r}} } \right\} + \Upsilon _1 (\pvr , 5r),
\end{aligned}
$$
since \eqref{38} holds. Additionally, from \eqref{74}, we have
$$
\begin{aligned}
&\quad \ \leq C _8 C_1^{\frac{\tilde{\conspoin} _1}{p^-_{40r} -1}}   \left[ C_3(p^-_{40r}-1)\right]^{-\tilde{\conspoin} _1} r^{\cpoin _1 -p^-_{4r} + \alpha (p^-_{4r} -1 )- \frac{p^+_{4r} - p^- _{4r}}{p^+_{4r}} \max \left\{\frac{\cpoin _1}{p^-_{4r}} , 1 \right\}  } + \Upsilon _1 (\pvr , 5r)\\
&\quad \ \leq C _9 C_1^{\frac{\tilde{\conspoin} _1}{p^-_{\pvr , 41r} -1}}   \left[ C_3(p^-_{\pvr ,41r}-1)\right]^{-\tilde{\conspoin} _1} r^{\cpoin _1 -p(\pvr) + \alpha (p(\pvr) -1 ) } + \Upsilon _1 (\pvr , 5r),
\end{aligned}
$$
where  $p^-_{\pvr , 41r} := \inf _{B(\pvr , 41r)}p$ and  $C_i:=C_i (\consfi  , \consfii  , p^+, n , \consman _1 , \conspoin , \conspoin  _0 , \conspoin _3 , \conspoin _4 , \cpoin _1 , \cpoin _2 , \alpha , \conrem)>0$ for $i=8,9$.

Therefore, from \eqref{102}, the proof of the theorem is complete.  \end{proof}

For every open set $V\subset \subset \Omega$, we write
$$
\mu (V):=\sup \left\{\int_V  \mathscr{A}(\cdot , \grad  \vuu) \multg  \deriv  \zeta  \, \dwvi  \:|\: 0 \leq \zeta \leq 1 \text { and } \zeta \in C_0^{\infty}(V)\right\} .
$$

\begin{corollary} \label{106} Let \( K \) be a compact subset of \( \Omega \), and define \( K_\delta := \{\pvr \in \Omega \:|\: d(\pvr, K) < \delta\} \) such that \( \overline{K_\delta} \subset \Omega \). Suppose that \( \vartheta \in \mathscr{K}_{\obsi}(\Omega) \) is a solution to the obstacle problem \eqref{4}. Additionally, assume that the obstacle \( \obsi \in C(\Omega) \)  satisfies \eqref{107}.

For every $\pvr \in K$ and $0<r< \frac{1}{41}\min \{\rast , \delta\}$, the following inequality holds:
$$
\mu (B(\pvr, r)) \leq C   r^{\cpoin _1 -p(\pvr) + \alpha (p(\pvr) -1 )},
$$
where $B(\pvr , 41r)  \subset \subset \psi (\tilde U)$ and {\footnotesize  $C:= C( \consfi  , \consfii  , p^- _{K_\delta} , p^+ _{K_\delta} , \tilde \conspoin _1 , \conspoin _1 , \conspoin _2 ,  \conspoin _3 , \conspoin _4 , \conspoin _{\wghtzi} , \conspoin _0 , \conspoin , n , s , \gamma , 1_{\wghtz}, \cpoin _1 , \cpoin _2 , \alpha ,   \conrem  , \consman _1 , \| \vuu  \|_{s , B(\pvr  , 41r) ,  \wghtvi } , $  $ \| \vuu  \|_{s \gamma ^\prime , B(\pvr  , 41r) , \wghtvi },  \inf _{B(\pvr  , 5r) } \obsi ,  \sup _{B(\pvr  , 5r) } \obsi ,   )>0$. Here, $s> p^{+}_{41r} - p^-_{41r}$ $1<\gamma<1_{\wghtz}$, $p^-_{K_\delta}=\inf _{K_\delta} p$ and $p^+_{K_\delta}=\sup _{K_\delta} p$.}
\end{corollary}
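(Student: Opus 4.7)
The plan is to reduce the corollary to the argument of Theorem \ref{81} by specializing to the trivial constant family $\vue := \vuu$ for every $\ell$. With this choice, $\mu_\ell = \mu$, property \ref{94} holds with $\vinf_0 = \vinf = 0$, and property \ref{93} is satisfied with $\Upsilon_1 \equiv \Upsilon_2 \equiv 0$. Property \ref{103} is, however, \emph{not} assumed in the corollary; what replaces it is that $K_\delta$ is compactly contained in $\Omega$, so $p^-_{K_\delta} := \inf_{K_\delta} p > 1$ strictly. Consequently, the factors $(p^-_{41r} - 1)^{-1}$ that appear in Theorem \ref{81} are uniformly bounded by $(p^-_{K_\delta} - 1)^{-1}$, and no exclusion ball $B(0_{\Omega}, R_1)$ is needed.

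I would first dispose of a trivial case: if $B(\pvr, r) \cap \{\vuu = \obsi\} = \emptyset$, then Proposition \ref{95} implies that $\vuu$ is $\mathscr{A}$-harmonic in $B(\pvr, r)$, so $\mu(B(\pvr, r)) = 0$ and the claim holds vacuously. Otherwise, fix $\pvr_0 \in B(\pvr, r) \cap \{\vuu = \obsi\}$; since $B(\pvr, r) \subset B(\pvr_0, 2r)$, it suffices to bound $\mu(B(\pvr_0, 2r))$.

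Next I would reproduce Step 1 of the Theorem's proof applied directly to $\vuu$. The Hölder hypothesis \eqref{107} yields the analogue of \eqref{96}: $\sup_{B(\pvr_0, 8r)} \obsi - \inf_{B(\pvr_0, 8r)} \obsi \leq C r^\alpha$. Introducing the same $\alpha_1, \alpha_2$ used there, Proposition \ref{67} \ref{23} bounds $\sup_{B(\pvr_0, 2r)} (\vuu - \alpha_1)^+$ by a local $L^{c_2 c_3}$-average of $(\vuu - \alpha_1)^+ + 2r$; this replaces the appeal to Lemma \ref{100}, which required property \ref{103}. The complementary lower bound, obtained in the Theorem via Lemma \ref{85}, I would derive instead from Proposition \ref{84} (used with $s_0 = c_2 c_3$) together with the reverse Hölder inequality of Lemma \ref{82}, applied to the nonnegative supersolution $\vuu + \alpha_2 + 2r$ (or to $\vuu + 2r$ if $\alpha_2 \geq 0$). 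Chaining these two bounds gives an oscillation estimate of the form $\sup_{B(\pvr_0, 2r)} \vuu - \inf_{B(\pvr_0, 2r)} \vuu \leq C\bigl(\sup_{B(\pvr_0, 4r)} \obsi - \inf_{B(\pvr_0, 4r)} \obsi + r\bigr) \leq C r^\alpha$, with constant depending on $p^-_{K_\delta}, p^+_{K_\delta}$ and the geometric data listed in the statement.

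Finally I would perform Step 2: choose a cutoff $\eta \in C_0^\infty(B(\pvr_0, 4r))$ with $\eta = 1$ on $B(\pvr_0, 2r)$ and $|\deriv \eta|_{\fnfs} \lesssim r^{-1}$ (Lemma \ref{3}), test the supersolution inequality against $\eta^{p^+_{4r}} - \zeta$ for arbitrary admissible $\zeta$, and apply growth \ref{21} with Hölder's inequality to reduce the bound on $\mu(B(\pvr_0, 2r))$ to controlling the gradient energy $\int_{B(\pvr_0, 4r)} |\grad \vuu|_{\fnf}^{p} \eta^{p^+_{4r}} \, \dwvi$. The Caccioppoli-type estimate \eqref{22} (for supersolutions, with $N = \sup_{B(\pvr_0, 4r)} \vuu$), combined with the oscillation bound just obtained and with \eqref{78}, \eqref{38}, produces the exponent $\cpoin_1 - p(\pvr_0) + \alpha(p(\pvr_0) - 1)$. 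The main technical obstacle is the bookkeeping at the end: one must use \eqref{66} and \eqref{74} to replace $p(\pvr_0)$ by $p(\pvr)$ in the exponent of $r$, absorbing the small difference $p^+_{41r} - p^-_{41r}$ into the constant via \eqref{75}.
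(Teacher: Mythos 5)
Your proposal is correct and matches the evident intended derivation: the paper states Corollary \ref{106} without proof as a specialization of Theorem \ref{81}, and taking the constant family $\vue \equiv \vuu$ (so $\mu_\ell = \mu$, $\vinf = \vinf_0 = 0$, $\Upsilon_1 = \Upsilon_2 = 0$) and using $p^-_{K_\delta} > 1$ to absorb the $(p^-_{41r}-1)$-dependent factors and dispense with the exclusion ball $B(0_\Omega, R_1)$ is exactly the right reduction. The only point worth flagging is that without property \ref{103} the inequality $(c_2+1)(p-1) \leq 1$ is no longer available, so $c_2 c_3 \leq \gamma$ (needed to invoke Proposition \ref{84} with $s_0 = c_2 c_3$ and Corollary \ref{127}) is not automatic; this is harmless because one may replace $c_2 c_3$ by $\min\{c_2 c_3, \gamma\}$ throughout, using Jensen's inequality on both the positive- and negative-exponent averages in Lemma \ref{82}.
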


\subsection{Proof of Theorem \ref{104}}

Recall that:

\medskip

\noindent {\bf Theorem \ref{104}.} {\it  Let \( (\Omega, \fnf, \wghtvi) \) be a reversible Finsler manifold, where \( \wghtvi \) and \( p \) satisfy conditions \eqref{118}–\eqref{120}. Let $\singset \subset \Omega$ be a closed set. Assume that $u \in C(\Omega)$ is a solution of \eqref{13} in $\Omega \backslash \singset$, and there exists $\alpha \in (0,1]$ such that for any $\pvr \in \singset$ and $\pvry \in \Omega$:
$$
|u(\pvr)-u(\pvry)| \leq Cd^\alpha (\pvr , \pvry).
$$

If for each compact subset $K$ of $\singset$, the $\cpoin _1 -p^+_{K} + \alpha (p^+_{K} -1 )$-Hausdorff measure of $K$ is zero, then $u$ is a solution of \eqref{13} in $\Omega$.}

\medskip

\begin{proof}
{\it Steep 1.} Fix open sets $\Omega_0, \hat{\Omega} \subset \Omega$ such that $\Omega_0 \subset\subset \hat{\Omega} \subset\subset \Omega$. From Propositions \ref{105} and \ref{95}, there exists a unique continuous solution $u _1 \in \mathscr{K}_{u}(\hat \Omega )$ to the obstacle problem \eqref{4}. 
 
For an open set \( V \subset \Omega_0 \), define 
$$
\mu  _1 (V):=\sup \left\{\int_V  \mathscr{A}(\cdot , \grad  u _1) \multg  \deriv  \zeta  \, \dwvi  \:|\: 0 \leq \zeta \leq 1 \text { and } \zeta \in C_0^{\infty}(V)\right\} .
$$ 
Also, for an arbitrary set \( E \subset \Omega _0 \), define
\[
\mu_1(E) := \inf \left\{ \mu_1(V) \:|\: V \subset \Omega_0 \text{ is an open set such that } E \subset V \right\}.
\]

Let $K$ be a compact set in $\singset \cap \Omega _0$. From Corollary \ref{106}, for any $\pvr  \in K$ and $0<r< \frac{1}{41}\min \{\rast , \delta\}$, we have
$$
\mu(B(\pvr , r)) \leq C r^{\cpoin _1 -p^+_K + \alpha (p^+_K -1 )} .
$$

Since the $\cpoin _1 -p^+_K + \alpha (p^+_K -1 )$-Hausdorff measure of $K$ is zero,  for any $\epsilon>0$, there exists $\delta _0>0$ such that for any $0 < \delta _1<\delta_0$,
$$
H_{\delta_1}^s(K)<\epsilon ,
$$
where $s=\cpoin _1 -p^+_K + \alpha (p^+_K -1 )$.

Taking $\delta _1$ small enough, there exists a family of sets $C_j^{\delta _1}$ such that $K \subset \cup_{j=1}^{\infty} C_j^{\delta _1}$, $\operatorname{diam}(C_j^{\delta _1})<\delta _1$, and
$$
H_{\delta_1} ^s(K) \leq \sum_{j=1}^{\infty} \beta(s) \left[ \frac{\operatorname{diam}(C_j^{\delta _1})}{2} \right]^s<\epsilon .
$$
Hence,
$$
\sum_{j=1}^{\infty} \left[\operatorname{diam} (C_j^{\delta _1}) \right]^s<\left(\frac{2^s}{\beta(s)} \right) \epsilon .
$$

Take a family of balls $\{B(\pvr _j, r_j)\}$ such that $C_j^{\delta _1} \subset B(\pvr _j, r_j)$, where $\pvr _j \in K$ and $ r_j=\operatorname{diam}(C_j^{\delta _1})$. Then
$$
\mu _1 (K) \leq \sum_{i=1}^{\infty} \mu _1 (C_j^{\delta _1}) \leq \sum_{j =1}^{\infty} \mu _1 (B(\pvr _j, r_j)) \leq C \sum_{j=1}^{\infty} r_j^{\cpoin _1 -p^+_K + \alpha (p^+_K -1 )} \leq C\left(\frac{2^s}{\beta(s)}\right) \epsilon,
$$
where the constant $C$ is given in Corollary \ref{106}.

Since $\epsilon$ is arbitrary, we conclude that $\mu _1 (K)=0$. Hence, $\mu _1(\singset \cap \Omega _0)=0$.\\

{\it Steep 2.} Next, we prove that $\mu _1(\Omega _0 \backslash \singset)=0$.

Let $\epsilon>0$ and $\zeta  \in C_0^{\infty}(\Omega _0 \backslash \singset)$ with $\zeta \geq 0$. Define $\zeta _{\epsilon}=\min \{\zeta , \frac{u_1 - u }{\epsilon}\}$, so that  $\zeta _{\epsilon} \in W_0^{1, p(\pvr )}(\Omega _0 \backslash \singset ; \wghtv)$.

Define $D_1:=\{\pvr  \in \Omega _0 \:|\: u_1(\pvr )>u(\pvr )\}$ and  $D_2:=\{\pvr  \in \Omega _0 \:|\: u_1(\pvr )=u(\pvr )\}$. Since the obstacle $u$ is continuous, by Proposition \ref{95}, we know that $u_1$ is a solution of \eqref{13} in $D_1$. Then,
\begin{equation}\label{108}
\begin{aligned}
& \int_{\Omega _0 \backslash \singset}  \mathscr{A}(\cdot , \grad  u_1) \multg   \deriv  \zeta _{\epsilon }  \, \dwvi   \\
& \quad \ =\int_{D_1}  \mathscr{A}(\cdot , \grad  u_1) \multg  \deriv  \zeta _{\epsilon }  \, \dwvi  +\int_{D_2} \mathscr{A}(\cdot , \grad  u_1) \multg  \deriv  \zeta _{\epsilon } \, \dwvi  =0 .
\end{aligned}
\end{equation}


Additionally, since $u$ is a solution of \eqref{13} in $\Omega _0 \backslash \singset$, we have
\begin{equation}\label{109}
\int_{\Omega _0 \backslash \singset}  \mathscr{A}(\cdot , \grad  u) \multg  \deriv \zeta _{\epsilon }  \, \dwvi  =0 .
\end{equation}

By \eqref{108} and \eqref{109}, 
\begin{equation} \label{110}
\int_{\Omega _0 \backslash \singset} \left[ \mathscr{A}(\cdot , \grad  u_1)-\mathscr{A}(\cdot , \grad  u) \right] \multg  \deriv  \zeta _{\epsilon }  \, \dwvi  =0 .
\end{equation}

Define $(\Omega _0 \backslash \singset)_1=\{\pvr  \in \Omega _0 \backslash \singset \:|\:   \zeta (\pvr ) \leq (u_1(\pvr )-u(\pvr ))/\epsilon \}$ and $(\Omega _0 \backslash \singset)_2=\{\pvr  \in \Omega _0 \backslash \singset \:|\:   \zeta (\pvr )> (u_1(\pvr )-u(\pvr ) ) / \epsilon\}$. From \eqref{110} and \ref{17}, we have
$$
 \int_{(\Omega _0 \backslash \singset)_1}  \left[ \mathscr{A}(\cdot , \grad  u_1)-\mathscr{A}(\cdot , \grad  u) \right] \multg  \deriv \zeta  \, \dwvi  =-\frac{1}{\epsilon } \int_{(\Omega _0 \backslash \singset)_2} \left[  \mathscr{A}(\cdot , \grad  u_1)-\mathscr{A}(\cdot , \grad  u) \right] \multg   \deriv (u_1-u)  \, \dwvi  \leq 0.
$$
Taking the limit as $\epsilon  \rightarrow 0$, we obtain
$$
\int_{(\Omega _0 \backslash \singset)_1}  \left[ \mathscr{A}(\cdot , \grad  u_1)-\mathscr{A}(\cdot , \grad  u) \right] \multg  \deriv  \zeta  \, \dwvi   \leq 0 .
$$
Further, since
$$
\int_{\Omega _0 \backslash \singset}  \mathscr{A}(\cdot , \grad  u) \multg  \deriv  \zeta  \, \dwvi  =0,
$$
we conclude
$$
\int_{\Omega _0 \backslash \singset}  \mathscr{A}(\cdot , \grad  u_1) \multg  \deriv  \zeta  \, \dwvi   \leq 0,
$$
it implies that $\mu _1 (\Omega _0 \backslash \singset) \leq 0$, so $\mu _1 (\Omega _0 \backslash \singset)=0$.

From the  above, we conclude that $\mu _1 (\Omega _0)=0$. That is, for any $\zeta \in W_0^{1, p(\pvr )}(\Omega _0 ; \wghtv)$, we have
$$
\int_{\Omega _0} \mathscr{A}(\cdot , \grad  u_1) \multg  \deriv  \zeta \, \dwvi =0 .
$$

$ $

{\it Steep 3.} Let $u_2$ be a solution to the obstacle problem \eqref{111} in $\mathscr{K}_{-u}(\hat \Omega)$. Similarly to \(u_1\), we find that for all $\zeta \in C^{\infty} _0(\Omega _0)$,
$$
\int_{\Omega _0} \left[- \mathscr{A}(\cdot ,-\grad  u_2)\right] \multg  \deriv   \zeta \, \dwvi   =0 .
$$

 Next, we will prove that $u_1=-u_2=u$ a.e. in $\Omega _0$. Indeed, since $-u_2 \leq u \leq u_1$ a.e. in $\Omega _0$, $u_1-u \in$ $W_0^{1, p(\pvr )}(\Omega _0 ; \wghtv)$, and $u_2+u \in W_0^{1, p(\pvr )}(\Omega _0 ; \wghtv )$, it follows that  $u_1+u_2 \in W_0^{1, p(\pvr )}(\Omega _0 ; \wghtv  )$. Therefore,
\begin{gather*}
 \int_{\Omega _0}  \mathscr{A}(\cdot , \grad  u_1) \multg  \left( \deriv u_1+\deriv  u_2 \right) \, \dwvi  =0 \\
 \int_{\Omega _0} \mathscr{A}(\cdot ,-\grad  u_2) \multg  \left(  \deriv  u_1+\deriv  u_2 \right) \, \dwvi  =0 .
\end{gather*}

Thus,
$$
\int_{\Omega _0}  \left[ \mathscr{A}(\cdot , \grad  u_1)-\mathscr{A}(\cdot ,-\grad  u_2)\right] \multg  \left(  \deriv  u_1+\deriv  u_2 \right) \, \dwvi  =0 .
$$

Therefore, by \ref{17}, $\deriv (u_1+u_2)=0$ a.e. in $\Omega _0$.  This implies $u_1+u_2=0$ a.e. in $\Omega _0$, so $u_1=-u_2=u$ a.e. in $\Omega _0$. Therefore, $u$ is the solution to \eqref{13} in $\Omega _0$, meaning that for any $\zeta \in W_0^{1, p(\pvr )}(\Omega _0 ; \wghtv )$, we have
$$
\int_{\Omega _0} \mathscr{A}(\cdot , \grad  u) \multg  \deriv \zeta \, \dwvi  =0 .
$$

Thus, $u$ is a solution of \eqref{13} in $\Omega$, and the closed set $\singset$ is removable.\end{proof}

\vspace{1.5cm}

\noindent {\bf Author contributions:} All authors have contributed equally to this work for writing, review and editing. All authors have read and agreed to the published version of the manuscript.

\noindent {\bf Funding:} This work was supported by CNPq - Conselho Nacional de Desenvolvimento Científico e Tecnológico,  Grant 153232/2024-2.

\noindent {\bf Data Availibility:} No data was used for the research described in the article.

\noindent {\bf Declarations}

\noindent {\bf Conflict of interest:} The authors declare no conflict of interest.


\bibliographystyle{abbrv}
    
\bibliography{ref}

\end{document}